\newcommand{\normal}{\color{black}}
\definecolor{mno}{rgb}{0.5,0.1,0.5}
\newcommand{\R}{\mathds R}
\newcommand{\N}{\mathds N}
\newcommand{\Pp}{\mathds P}
\newcommand{\Ee}{\mathds E}
\newcommand{\I}{\mathds 1}
\def\<{\langle}
\def\>{\rangle}
\newcommand{\casymp}[1]{\stackrel{#1}{\asymp}}
\newtheorem{theorem}{Theorem}[section]
\newtheorem{lemma}[theorem]{Lemma}
\newtheorem{proposition}[theorem]{Proposition}
\newtheorem{corollary}[theorem]{Corollary}
\theoremstyle{definition}
\newtheorem{example}[theorem]{Example}
\newtheorem{remark}[theorem]{Remark}
\begin{document}
\allowdisplaybreaks
\title[Fractional Schr\"{o}dinger Operators with Confining Potentials] {\bfseries Heat kernels and Green functions for fractional Schr\"{o}dinger operators with confining potentials}

\author{Xin Chen,\quad  Kamil Kaleta,\quad
Jian Wang}

\thanks{\emph{X.\ Chen:}
   Department of Mathematics, Shanghai Jiao Tong University, 200240 Shanghai, P.R. China. \texttt{chenxin217@sjtu.edu.cn}}
   \thanks{\emph{K.\ Kaleta:}
Wroc{\l}aw University of Science and Technology,  Wybrze\.{z}e St. Wyspia\'{n}skiego 27, 50-370 Wroc{\l}aw, Poland.
\texttt{kamil.kaleta@pwr.edu.pl}}
   \thanks{\emph{J.\ Wang:}
    School of Mathematics and Statistics \& Key Laboratory of Analytical Mathematics and Applications (Ministry of Education) \& Fujian Provincial Key Laboratory
of Statistics and Artificial Intelligence, Fujian Normal University, 350007 Fuzhou, P.R. China. \texttt{jianwang@fjnu.edu.cn}}

\date{}

\maketitle

\begin{abstract}
We give two-sided, global (in all variables) estimates of the heat kernel and the Green function of the fractional Schr\"odinger operator with a non-negative and locally bounded potential $V$ such that $V(x) \to \infty$ as $|x| \to \infty$. We assume that $V$ is comparable to a radial profile with the doubling property. Our bounds are sharp with respect to spatial variables and qualitatively sharp with respect to time. The methods we use combine probabilistic and analytic arguments. They are based on
the strong Markov property and
the Feynman--Kac formula.
\medskip

\noindent\textbf{Keywords:}
fractional Schr\"odinger operator;
confining
potential; heat kernel; Green function; Feynman--Kac formula; ground state
\medskip

\noindent \textbf{MSC 2020:} 35J10; 47D08; 35K08; 60G52; 81Q10.
\end{abstract}
\allowdisplaybreaks

\section{Introduction and main results}
Let $d \in \N :=\left\{1,2,\dots \right\}$ and $\alpha \in (0,2)$. Let $\mathcal L :=-(-\Delta)^{\alpha/2}$ be the \emph{fractional Laplacian} of order $\alpha$ -- the non-local, pseudo-differential operator which is defined by
$$
\mathcal Lf(x)=C_{d,\alpha}\int_{\R^d}\left(f(x+z)-f(x)-\langle\nabla f(x),z\rangle\rangle\I_{\{|z|\le 1\}}\right)\frac{1}{|z|^{d+\alpha}}\,dz,\quad f\in C_c^{\infty}(\R^d)
$$
with
\[
C_{d,\alpha} = \frac{ \alpha 2^{\alpha-1}\Gamma\big((d+\alpha)/2\big)}{\pi^{d/2}\Gamma(1-\alpha/2)}.
\]
It is well known that $\mathcal L$ is an infinitesimal generator of the rotationally invariant $\alpha$-stable L\'evy process $X:=\{X_t\}_{t\ge 0}$ in $\R^d$, see e.g.\ \cite{K} for various definitions of $\mathcal L$ and the probabilistic background.

The main goal of the paper is to find global, two-sided estimates of the heat kernel and the Green function
associated with
the Schr\"odinger operator
\begin{equation}\label{e1-1}
\mathcal L^V=\mathcal L-V.
\end{equation}
Here and in what follows, the potential $V:\R^d\to [0,\infty)$ is a sufficiently regular, non-negative and locally bounded function such that $V(x) \to \infty$ as $|x| \to \infty$. Therefore, $\mathcal L^V$ extends to a (negative) self-adjoint operator on
$L^2(\R^d):=L^2(\R^d,dx)$, see e.g. \cite[Proposition 10.22]{Sch}.

The operator $-\mathcal L^V$ can be interpreted as a Hamiltonian in the fractional version of the classical model of \emph{oscillator}, where $\mathcal L^V$ takes the form $\Delta-V$, with $\Delta$ denoting the standard Laplacian and $V$ being a potential as defined above. The most classical case, $V(x) = c |x|^2$ (where $c>0$), corresponds to the harmonic oscillator. However, potentials exhibiting different growth rates at infinity are also of interest, such as the anharmonic (quartic) \cite{BW} or logarithmic \cite{CZTH} oscillators. In these models, particles are confined by forces corresponding to potentials $V$ that grow to infinity at infinity. Such potentials are, therefore, commonly referred to as \emph{confining} potentials in the literature. Fractional operators, especially in the \emph{relativistic} case with $\alpha=1$, play a significant role in contemporary mathematical physics (see, e.g.,\ \cite{FMS,LS}).

The Schr\"odinger semigroup corresponding to the operator $\mathcal L^V$ is denoted by $\{T_t^V\}_{t\ge 0}$; it fulfills the Feynman--Kac formula
\begin{equation}\label{e1-4}
T_t^V f(x)=\Ee_x\left[\exp\left(-\int_0^t V(X_s)\,ds\right)f(X_t)\right],\quad f \in L^2(\R^d) \text{\ or } f \in C_b(\R^d)
\end{equation}
($\Pp_x$ and $\Ee_x$ denote the probability measure and the expectation with respect to starting position $x\in \R^d$, respectively). Moreover, there exists a jointly continuous and symmetric integral kernel $p:(0,\infty) \times \R^d\times \R^d \to (0,\infty)$ such that
\begin{equation}\label{e1-5}
T_t^V f(x)=\int_{\R^d}p(t,x,y)f(y)\,dy,\quad t>0, \ f \in L^2(\R^d) \text{\ or } f \in C_b(\R^d),
\end{equation}
see e.g.\ the monograph by Demuth and van Casteren \cite[Theorem 2.5]{DC} for a reference. Since $V\ge0$,
$$p(t,x,y)\le q(t,x,y),\quad t>0,\ x,y\in \R^d,$$
where $q(t,x,y)$ is the heat kernel
associated with
the fractional Laplacian $\mathcal L$ (or, equivalently, the transition density of
the $\alpha$-stable process $X$). We remark that there are constants $c_1,c_2>0$ such that
\begin{equation}\label{e:heat1}
c_1\left(t^{-d/\alpha}\wedge\frac{t}{|x-y|^{d+\alpha}}\right)\le q(t,x,y)\le c_2\left(t^{-d/\alpha}
\wedge \frac{t}{|x-y|^{d+\alpha}}\right),\quad t>0,\ x,y\in \R^d,
\end{equation}
see \cite[Theorem 2.1]{BG} and \cite[Proof of Lemma 5]{BJ07}.

\medskip

Throughout the paper we use the following assumption
on the regularity and growth of $V$.

\medskip

\noindent
\textbf{Assumption (H)}
\begin{itemize}
\item[] {\it
There exist a non-decreasing function $g:[0,\infty)\to [1,\infty)$, satisfying $\lim_{r \to \infty}g(r)=\infty$,
and constants $C_1,C_2,C_3>0$
such that
\begin{equation}\label{e1-2}
g(2r)\le C_1 g(r),\quad r\ge0,
\end{equation}
and
\begin{equation}\label{e1-3}
C_2g(|x|)\le V(x)\le C_3g(|x|),\quad |x|\ge 1.
\end{equation}}
\end{itemize}

\medskip

Though the theory of non-local Schr\"odinger operators with confining potentials and their semigroups is an active area of research, existing results are mainly focused on
intrinsic contractivity properties and ground state estimates.
For basic definitions, we refer the reader to Davies and Simon \cite{DS84}. A sharp necessary and sufficient condition for the intrinsic ultracontractivity of the non-local Schr\"odinger operator $\mathcal L^V$, driven by relativistic and usual rotationally invariant $\alpha$-stable processes, was provided by Kulczycki and Siudeja \cite{KS}, and by Kaleta and Kulczycki \cite{KK10}, respectively. Later, Kaleta and L\H{o}rinczi \cite{KL} extended these results to non-local Schr\"odinger operators $\mathcal L^V$ associated with the class of symmetric L\'evy processes possessing the direct jump property (DJP). The authors developed a new iteration technique to establish eigenfunction estimates for $\mathcal L^V$, including two-sided sharp bounds for the ground state, and characterized the asymptotic version (valid for large times) of intrinsic ultracontractivity.
Chen and Wang \cite{CW} applied the intrinsic super-Poincar\'e inequality to establish a sharp criterion for intrinsic ultracontractivity and derived two-sided estimates for the ground state of non-local Schr\"odinger operators associated with general symmetric jump processes. Their analysis accounted for both finite-range and long-range jump behaviors. Furthermore, by employing a similar approach, the authors investigated various hyperboundedness properties of
the
associated Schr\"odinger semigroups \cite{CW1}.
More recently, Kaleta and Schilling \cite{KS} further analyzed the DJP class, providing large-time heat kernel estimates for the corresponding non-local Schr\"odinger operators with confining potentials. These estimates were established for $t\ge T_0$, with $T_0>0$ properly chosen.
However, to the best of our knowledge, \emph{global estimates of the heat kernel $p(t,x,y)$ for the full time regime,
as well as estimates for the Green function $G(x,y)$ associated with $\mathcal L^V$,
remain unknown.} In this paper, we address this problem comprehensively. We focus on the fractional Schr\"odinger operator $\mathcal L^V$ and prove two-sided estimates for $p(t,x,y)$ over the full time horizon,
as well as two-sided estimates for the Green function $G(x,y)$.

\ \

To state our first main result, we need to introduce some more notation. Define
\begin{equation}\label{e1-6}
t_0(s):=\inf\left\{t>0: \exp\left(-tg(s)\right)=\frac{t}{(1+s)^{\alpha}}\right\},\quad s\ge0.
\end{equation}
Note that, for any fixed $s\ge0$, the function $t\mapsto e^{-t g(s)}$ is strictly decreasing and onto the interval $(0,1)$, while $t\mapsto\frac{t}{(1+s)^{\alpha}}$ is strictly increasing, onto $(0,\infty)$. Then, $t_0(s)$ is unique in the sense that for any $t<t_0(s)$, $\exp\left(-tg(s)\right)>\frac{t}{(1+s)^{\alpha}}$, and for $t>t_0(s)$,  $\exp\left(-tg(s)\right)<\frac{t}{(1+s)^{\alpha}}$.
See Lemma \ref{dbl-polyn} below for further estimates of $t_0(s)$.

Also, we say that the function $t_0:[0,\infty)\to (0,\infty)$ is almost increasing  (resp.\ decreasing), if there exists a continuous,
strictly increasing (resp.\ strictly decreasing) function $h:[0,\infty) \to (0,\infty)$ and constants $C_*,C_{**}\ge 1$ such that
\begin{equation}\label{e4-1}
C_*^{-1}h(s)\le t_0(s)\le C_*h(s),\quad C_{**}^{-1}h(s)\le h(2s)\le C_{**}h(s),\quad s\ge0.
\end{equation}
For any $a,b\ge0$, $a\wedge b:=\min\{a,b\}$ and $a\vee b:=\max\{a,b\}.$

Under Assumption \textbf{(H)} it holds that $\lim_{|x|\to \infty}V(x)=\infty$. Then, according to
\cite[Corollary 1.3]{WW} (see also \cite[Proposition 1.1]{CW} or \cite[Lemma 1]{KK10}), the  operator $T_t^V: L^2(\R^d)\to L^2(\R^d)$ is compact for every $t>0$. Hence, by general
theory of semigroups of compact operators, there exists an orthonormal basis in
$L^2(\R^d)$, consisting of eigenfunctions $\{\phi_n\}_{n\ge 1}$ of the operator $-\mathcal L^V$, with associated eigenvalues $\{\lambda_n\}_{n\ge 1}$ satisfying
$0<\lambda_1<\lambda_2\le \lambda_3\le \cdots$ and $\lim_{n \to \infty}\lambda_n=\infty$. We usually call
$\phi_1$ the ground state of $-\mathcal L^V$.
In particular, for all $t>0$ and $x\in \R^d$,
\begin{equation}\label{l4-6-1}
T_t^V \phi_1(x)=e^{-\lambda_1 t}\phi_1(x).
\end{equation}
Furthermore, it is known that $\phi_1:\R^d \to \R_+$ has a version which is continuous and strictly positive (see e.g. \cite[Proposition 1.2]{CW}). Then, by
\cite[Corollary 3]{KK10}, for all $x\in \R^d$,
$$
c_1H(x)\le \phi_1(x)\le c_2H(x),
$$ where $$H(x):=\frac{1}{g(|x|)(1+|x|)^{d+\alpha}}.$$
This, along with \eqref{l4-6-1}, yields that
\begin{equation}\label{l4-6-2}
c_3e^{-\lambda_1 t}H(x) \le T_t^V H(x)\le c_4e^{-\lambda_1 t}H(x),\quad  x\in \R^d,\ t>0.
\end{equation}
The following theorem is the first main result of the paper.

\begin{theorem} \label{th:main}
Let Assumption {\bf (H)} hold. Then there are constants $C_0,\tilde C_0, C_1\ldots,C_{12}>0$ such that
for every $x,y\in \R^d$ and $t>0$ the following statements hold.
\begin{itemize}
\item [(1)]If $0<t\le C_0t_0(|x|\wedge |y|)$, then
\begin{equation}\label{t1-1-1}
\begin{split}
 C_{1}\left(t^{-d/\alpha}\wedge \frac{t}{|x-y|^{d+\alpha}}\right) & \left(\frac{1}{tg(|x|\vee|y|)}\wedge1\right) e^{-C_{2}tg(|x|\wedge|y|)} \le p(t,x,y) \\
& \le C_{3}\left(t^{-d/\alpha}\wedge \frac{t}{|x-y|^{d+\alpha}}\right)\left(\frac{1}{tg(|x|\vee|y|)}\wedge1\right) e^{-C_{4}tg(|x|\wedge |y|)}.
\end{split}
\end{equation}
\medskip
\item[{\rm (2-i)}] If $t_0(\cdot)$ is almost increasing, then, for $t>C_0t_0(|x|\wedge |y|)$,
\begin{equation}\label{t1-1-2}
\begin{split}
  C_{5}e^{-\lambda_1t}H(x)H(y)\le p(t,x,y)
\le C_{6}e^{-\lambda_1t}H(x)H(y).
\end{split}
\end{equation}

\medskip
\item[{\rm (2-ii)}] If $t_0(\cdot)$ is almost decreasing, then,
for
$C_0t_0(|x|\wedge |y|)<t \le \tilde C_0$,
\begin{equation}\label{t1-1-3}
\begin{split}
 C_7H(x)H(y)\int_{\{|z|\le s_0(t)\}}e^{-C_{6}tg(|z|)}\,dz & \le p(t,x,y)\\
 &\le C_8H(x)H(y)\int_{ \{|z|\le s_0(C_{9}t)\}}e^{-C_{10}tg(|z|)}\,dz . \end{split}
\end{equation}
where $s_0(t)=h^{-1}(t)\vee 2$, $
h^{-1}(t):=\inf\{s\ge0: h(s)\le t\}
$
with $h$ given in \eqref{e4-1}, and we use the convention that $\inf \emptyset=-\infty$;
for every $t>\tilde C_0$,
\begin{equation}\label{t1-1-4}
C_{11}e^{-\lambda_1t}H(x)H(y)\le p(t,x,y)
\le C_{12}e^{-\lambda_1t}H(x)H(y).
\end{equation}
\end{itemize}

\end{theorem}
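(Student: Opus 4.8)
The plan is to split the argument into the small-time regime and the large-time regime, with the threshold $t \asymp t_0(|x| \wedge |y|)$, because the crossover captured by $t_0$ separates the phase where the process has not yet had time to ``feel'' the potential from the phase where the killing by $V$ has equilibrated into the ground-state decay. For the small-time bound \eqref{t1-1-1} I would use the Feynman--Kac formula \eqref{e1-4} together with the strong Markov property. The upper bound comes from two complementary estimates: first, $p(t,x,y) \le q(t,x,y)$, which gives the $t^{-d/\alpha} \wedge t/|x-y|^{d+\alpha}$ factor from \eqref{e:heat1}; second, a more refined estimate obtained by conditioning on whether the path stays near $x$ (or $y$) or wanders into the region where $V$ is already large. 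Splitting the time interval $[0,t]$ into $[0,t/2]$ and $[t/2,t]$, writing $p(t,x,y) = \int p(t/2,x,z) p(t/2,z,y)\,dz$, and estimating $\Ee_x[\exp(-\int_0^{t/2} V(X_s)\,ds)]$ using the fact that $V(X_s) \ge c\, g(|x|)$ with high probability on $\{|X_s - x| \le 1\}$ produces the $e^{-c t g(|x| \wedge |y|)}$ factor, while the $(1/(tg(|x|\vee|y|)) \wedge 1)$ factor arises from controlling the probability that the path reaches the far point before being killed --- this is where one needs the doubling property \eqref{e1-2} and the estimates of $t_0$ from the cited Lemma \ref{dbl-polyn}. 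The matching lower bound is obtained by restricting the Feynman--Kac expectation to a favourable event (path goes quickly and directly from a neighbourhood of $x$ to a neighbourhood of $y$ staying in a controlled region) and bounding $V$ from above by $C_3 g$ along that event, using \eqref{e1-3} and local boundedness of $V$.

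For the large-time regime ($t > C_0 t_0(|x|\wedge|y|)$) I would exploit the spectral/ground-state information \eqref{l4-6-2}. The clean case is (2-i): when $t_0$ is almost increasing, the condition $t > C_0 t_0(|x|\wedge|y|)$ forces $|x| \wedge |y|$ to be bounded in terms of $t$, and one can run the standard intrinsic-ultracontractivity-type argument. Concretely, one writes $p(t,x,y) = \int\int p(\varepsilon,x,u) p(t-2\varepsilon,u,v) p(\varepsilon,v,y)\,du\,dv$, uses the small-time bounds just established on the two outer factors to replace $p(\varepsilon,x,u)$ by something comparable to $H(x)$ times a localized kernel, and uses \eqref{l4-6-1}--\eqref{l4-6-2} (i.e. $T_t^V H \asymp e^{-\lambda_1 t} H$) on the middle factor; symmetry and the spectral gap $\lambda_2 > \lambda_1$ give the two-sided bound \eqref{t1-1-2}. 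The analogous but more delicate case (2-ii), when $t_0$ is almost decreasing, requires tracking the intermediate regime $C_0 t_0(|x|\wedge|y|) < t \le \tilde C_0$ where neither pure ground-state behaviour nor pure small-time behaviour holds: here $p(t,x,y) \asymp H(x) H(y) \int_{\{|z| \lesssim s_0(t)\}} e^{-c t g(|z|)}\,dz$, and the integral should be produced by first peeling off factors comparable to $H(x)$ and $H(y)$ near $x$ and $y$ (again via the small-time estimates and the bound $\phi_1 \asymp H$), then estimating the remaining ``bulk'' contribution $\int p(t',u,v)\,\cdots$ by splitting the $z$-integral at the scale $s_0(t) = h^{-1}(t) \vee 2$ where $t g(|z|)$ transitions from $O(1)$ to large; for $t > \tilde C_0$ this integral stabilizes and one recovers \eqref{t1-1-4} from \eqref{l4-6-2} as in (2-i).

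The main obstacle I anticipate is the gluing of the two regimes and, within the large-time regime, obtaining \emph{sharp} (two-sided, with matching exponential rates up to constants in the exponent) control in case (2-ii). Several technical points need care: (i) the small-time upper bound must be good enough near the threshold $t \asymp t_0$ that it matches the large-time lower bound there, which forces one to be precise about how the factor $(1/(tg(|x|\vee|y|)) \wedge 1)$ interacts with $H$; (ii) the lower bound in the small-time regime needs a careful choice of the favourable path event together with a ``hitting a ball'' estimate for the stable process that is uniform in the relevant parameters, and one must verify the constants survive the doubling manipulations; (iii) in (2-ii), the function $h$ (hence $s_0$ and the integral $\int_{\{|z| \le s_0(t)\}} e^{-ctg(|z|)}\,dz$) is only defined up to multiplicative constants, so one has to check that changing $t \mapsto C_9 t$ and $c$ in the exponent only changes the integral by a multiplicative constant --- this again is exactly where \eqref{e1-2}, \eqref{e4-1} and the estimates of $t_0$ in Lemma \ref{dbl-polyn} are essential. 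A secondary difficulty is bookkeeping the many constants $C_0,\tilde C_0,C_1,\dots,C_{12}$ consistently across the three displayed inequalities so that the statement is internally coherent.
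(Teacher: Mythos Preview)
Your overall split into small- and large-time regimes matches the paper, and your descriptions of both lower bounds (restrict to a favourable path event for small $t$; semigroup property plus \eqref{l4-6-2} for large $t$) are essentially what the paper does. The real gap is in the large-time \emph{upper} bounds. You propose to ``run the standard intrinsic-ultracontractivity-type argument'' by writing $p(t,x,y)=\iint p(\varepsilon,x,u)\,p(t-2\varepsilon,u,v)\,p(\varepsilon,v,y)\,du\,dv$ and using the small-time bound to replace $p(\varepsilon,x,u)$ by ``something comparable to $H(x)$ times a localized kernel''. This step fails: the small-time estimate \eqref{t1-1-1} controls $p(\varepsilon,u,v)$ by $q(\varepsilon,u,v)\,(\varepsilon g(|u|\vee|v|))^{-1}e^{-c\varepsilon g(|u|\wedge|v|)}$, and for slowly growing $g$ (e.g.\ $g(s)=\log^\beta(e+s)$ with $\beta<1$, which is precisely case (2-i) of Example~\ref{ex1}) the factor $e^{-c\varepsilon g(|u|)}$ decays more slowly than any power of $|u|$ and therefore cannot produce the factor $(1+|u|)^{-d-\alpha}$ in $H(u)$. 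In fact, intrinsic ultracontractivity is \emph{known to fail} for such potentials, so no off-the-shelf argument is available; the relation \eqref{l4-6-2} you invoke is only $\phi_1\asymp H$, which is strictly weaker.

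The paper instead reaches $p(t,z,u)\le C\,H(z)H(u)$ by a multi-step bootstrap (Lemmas~\ref{l4-5} and~\ref{l4-4}): one starts from the crude decay $T_t^V1(z)\le c(1+|z|)^{-\alpha/2}$ of Corollary~\ref{c2-2}, and each pass through the semigroup identity, combined with the off-diagonal estimate of Lemma~\ref{L:2.1} (itself obtained via the L\'evy system formula~\eqref{l2-1-1} and Dirichlet heat-kernel bounds~\eqref{e:heat2} --- tools your plan does not name), gains an extra factor $(1+|\cdot|)^{-\alpha/2}$; after finitely many iterations one reaches $H$, and only then is \eqref{l4-6-2} used to insert $e^{-\lambda_1 t}$. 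The same bootstrap drives the upper bound in (2-ii) (Lemma~\ref{L:3.2}), so your ``peeling off $H(x)$ and $H(y)$'' there has the same hole. Contrary to your intuition, (2-i) is not the cleaner case: the iteration in Lemma~\ref{l4-4} is the longest argument in the paper.
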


\bigskip

We now illustrate our result with the following example.

\begin{example} \label{ex1} \it Let
\[
V(x)=\log ^\beta (1+|x|), \qquad \beta>0.
\]
Then there are constants $C_0,\ldots,C_{10}>0$ such that for every $x,y\in \R^d$ with $|x|\le |y|$ and $t>0$ we have the following estimates.
\begin{itemize}
\item[{\rm (1)}] If $0<t\le C_0\log^{1-\beta}(2+|x|)$, then
\begin{align*}
C_1 & \left(t^{-d/\alpha}\wedge \frac{t}{|x-y|^{d+\alpha}}\right) \left(\frac{1}{t
\log^\beta(2+|y|)
 }\wedge1\right) e^{-C_2 t
 \log^\beta(2+|x|)
 } \le p(t,x,y) \\
& \ \ \ \ \ \ \ \ \ \
    \le C_3 \left(t^{-d/\alpha}\wedge \frac{t}{|x-y|^{d+\alpha}}\right) \left(\frac{1}{t\log ^\beta (2+|y|) }\wedge1\right) e^{-C_4 t \log ^\beta (2+|x|)}.
\end{align*}

\medskip

\item[{\rm (2-i)}] If
$t\ge C_0\log^{1-\beta}(2+|x|)$
and $\beta \le 1$, then
\begin{align*}
C_{5}
e^{-\lambda_1 t}
H(x) H(y) \le  p(t,x,y) \le C_{6} e^{-\lambda_1 t} H(x) H(y).
\end{align*} Here and in what follows,
\[
H(x) = \frac{1}{\log ^\beta (2+|x|) (1+|x|)^{d+\alpha}},\quad x\in \R^d.
\]

\medskip

\item[{\rm (2-ii)}]
If $t\ge C_0\log^{1-\beta}(2+|x|)$
and $\beta>1$, then
\begin{align*}
C_7  \left(e^{C_{8}
t^{-1/(\beta-1)}} \I_{\{t \le 1\}}+ e^{-\lambda_{1} t}\I_{\{t > 1\}}\right) & H(x) H(y) \le p(t,x,y) \\
& \le C_{9}  \left(e^{C_{10}
t^{-1/(\beta-1)}}\I_{\{t \le 1\}}+ e^{-\lambda_{1} t}\I_{\{t >1\}}\right)H(x) H(y) .
\end{align*}
\end{itemize}
\end{example}

\ \

Our second main result gives estimates for the Green  function $G^V(x,y):= \int_0^{\infty} p(t,x,y) \,dt$.
In what follows the notation $f(x) \asymp g(x)$, $x \in A$, means that there is a constant $c \geq 1$ such that $(1/c)g(x) \le f(x) \le c g(x)$, $x \in A$ (we also write ``$\casymp{c}$'' to indicate the comparison constant $c$).

\begin{theorem}\label{prop:Green}
Let
Assumption {\bf (H)} hold.  If $t_0(\cdot)$ is an almost monotone function, then
\[
G^V(x,y) \asymp  \frac{1}{|x-y|^{d-\alpha}} \left(1 \wedge \frac{1}{g(|x|)|x-y|^{\alpha}}\right) \left(1 \wedge \frac{1}{g(|y|)|x-y|^{\alpha}}\right) Q(x,y), \quad x,y \in \R^d, \
x\neq y.
\]
where
\[
Q(x,y):=
\begin{cases}
1,\ &\alpha < d,\\
\max \left\{\log \left(\frac{1}{|x-y|^\alpha g(|x| \wedge |y|)}\right),1\right\},\ & \alpha = d =1, \\
 \max \left\{\left(\frac{1}{|x-y|^{\alpha}g(|x| \wedge |y|)}\right)^{\frac{\alpha-d}{\alpha}},1\right\},\ & \alpha > d =1.
\end{cases}
\]
\end{theorem}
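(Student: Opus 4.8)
The plan is to start from the identity $G^V(x,y)=\int_0^\infty p(t,x,y)\,dt$ and to integrate in time the two-sided heat kernel bounds from Theorem~\ref{th:main}. Fix $x\neq y$, write $s:=|x|\wedge|y|$, and split at the threshold appearing in Theorem~\ref{th:main}:
\[
G^V(x,y)=\underbrace{\int_0^{C_0t_0(s)}p(t,x,y)\,dt}_{=:I_1}+\underbrace{\int_{C_0t_0(s)}^{\infty}p(t,x,y)\,dt}_{=:I_2}.
\]
I will show that $I_1$ already has the order of the claimed expression and that $I_2\lesssim I_1$, so that $G^V(x,y)\asymp I_1$; for the lower bound it then suffices to bound $I_1$ (indeed a single well-chosen subinterval of it) from below, so the other time ranges need not be examined for the lower estimate.

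For $I_1$ I would insert the small-time bound \eqref{t1-1-1} and reduce to estimating, for comparison constants $c>0$,
\[
J:=\int_0^{C_0t_0(s)}\Big(t^{-d/\alpha}\wedge\frac{t}{|x-y|^{d+\alpha}}\Big)\Big(\frac{1}{tg(|x|\vee|y|)}\wedge1\Big)e^{-ctg(s)}\,dt.
\]
The integrand is a product of elementary factors whose only break points are the characteristic times $|x-y|^\alpha$, $1/g(|x|\vee|y|)$ and $1/g(s)$. Using the estimates of $t_0(s)$ from Lemma~\ref{dbl-polyn} together with $g(s)\le g(|x|\vee|y|)$, one checks that $C_0t_0(s)\gtrsim 1/g(s)$ and that the Gaussian-type weight $e^{-ctg(s)}$ truncates the integral, up to a negligible remainder, at a time of order $1/g(s)$; hence $J\asymp\int_0^{\,c/g(s)}\big(t^{-d/\alpha}\wedge |x-y|^{-(d+\alpha)}t\big)\big(1\wedge (tg(|x|\vee|y|))^{-1}\big)\,dt$. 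I would then split this last integral at $|x-y|^\alpha$, $1/g(|x|\vee|y|)$ and $1/g(s)$ -- in whatever order these three quantities occur -- and integrate each monomial piece; this produces the factor $|x-y|^{\alpha-d}\big(1\wedge\frac{1}{g(|x|)|x-y|^\alpha}\big)\big(1\wedge\frac{1}{g(|y|)|x-y|^\alpha}\big)$, plus, precisely when $|x-y|^\alpha g(s)<1$, a correction coming from the segment $[|x-y|^\alpha,c/g(s)]$ on which the integrand equals a multiple of $t^{-d/\alpha}$: this segment contributes a bounded factor if $\alpha<d$, a factor $\asymp\log\big(1/(|x-y|^\alpha g(s))\big)$ if $\alpha=d=1$, and a factor $\asymp\big(1/(|x-y|^\alpha g(s))\big)^{(\alpha-d)/\alpha}$ if $\alpha>d=1$ -- exactly the three cases of $Q(x,y)$, and exactly in the regime where $Q(x,y)>1$. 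Hence $I_1\asymp|x-y|^{-(d-\alpha)}\big(1\wedge\frac{1}{g(|x|)|x-y|^\alpha}\big)\big(1\wedge\frac{1}{g(|y|)|x-y|^\alpha}\big)Q(x,y)$.

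For $I_2$ I would use part~(2) of Theorem~\ref{th:main}. If $t_0(\cdot)$ is almost increasing, then \eqref{t1-1-2} holds throughout $(C_0t_0(s),\infty)$ and $I_2\asymp e^{-\lambda_1C_0t_0(s)}H(x)H(y)\le H(x)H(y)$. If $t_0(\cdot)$ is almost decreasing, I would split $(C_0t_0(s),\infty)$ into $(C_0t_0(s),\tilde C_0]$ and $(\tilde C_0,\infty)$: on the second interval \eqref{t1-1-4} again yields a contribution $\lesssim H(x)H(y)$, while on the first I would insert \eqref{t1-1-3} and estimate
\[
H(x)H(y)\int_{C_0t_0(s)}^{\tilde C_0}\Big(\int_{\{|z|\le s_0(C_9t)\}}e^{-C_{10}tg(|z|)}\,dz\Big)\,dt,
\]
using $s_0(t)=h^{-1}(t)\vee 2$, the monotonicity and doubling of $h$ in \eqref{e4-1}, and $h\asymp t_0$, first to control the inner integral and then to integrate in $t$ (the $t$-integrand being decreasing, the interval is governed by its left endpoint $t\asymp t_0(s)$, where $s_0(C_9t)\asymp s$). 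In every case it then remains to verify $I_2\lesssim I_1$; for the large-time contributions this reduces to the pointwise bound
\[
H(x)H(y)\lesssim\frac{1}{|x-y|^{d-\alpha}}\Big(1\wedge\frac{1}{g(|x|)|x-y|^{\alpha}}\Big)\Big(1\wedge\frac{1}{g(|y|)|x-y|^{\alpha}}\Big)Q(x,y),
\]
which follows by elementary manipulations from $|x-y|\le(1+|x|)(1+|y|)$, $g\ge1$ and the definition of $Q$ (the factor $Q$ being needed only when $\alpha>d=1$ and $|x-y|$ is small, to absorb the vanishing of $|x-y|^{\alpha-d}$; the extra smallness $e^{-\lambda_1C_0t_0(s)}$, respectively the one carried by \eqref{t1-1-3}, only helps), and the intermediate-regime contribution is handled similarly once its inner integral has been estimated as above.

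The hard part will be the estimate of $I_1$: one has to run the ``split and integrate'' argument over all relative orderings of $|x-y|^\alpha$, $1/g(|x|\vee|y|)$, $1/g(s)$ and $t_0(s)$, and -- most importantly -- to justify rigorously that the weight $e^{-ctg(s)}$ yields the clean truncation at $1/g(s)$ claimed above (the exponential is genuinely used here to control the part of the integral beyond $1/g(s)$, and Lemma~\ref{dbl-polyn} together with the almost-monotonicity of $t_0$ enters in an essential way). A further technical obstacle is the almost-decreasing case, in which the genuinely intermediate bound \eqref{t1-1-3} must be integrated and the resulting expression -- stated through $h$ and $s_0$ -- translated back into $g(s)$ and $|x-y|$ in order to see that it does not dominate $I_1$.
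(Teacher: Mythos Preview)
Your proposal is correct and follows essentially the same strategy as the paper: split $G^V(x,y)$ at $C_0t_0(|x|\wedge|y|)$, show that the small-time integral $I_1$ already has the claimed order by inserting \eqref{t1-1-1} and integrating, and then prove $I_2\lesssim I_1$ using the large-time bounds of Theorem~\ref{th:main}(2). The only organizational difference is that the paper carries out the computation of $I_1$ via three explicit cases (according to the relative position of $|x-y|^\alpha$, $1/g(|x|)$ and $C_0t_0(|x|)$) rather than your unified ``truncate at $1/g(s)$ and split at the break points'' description, and for the almost-decreasing intermediate regime the paper simply bounds $s_0(C_9t)\le 2(1+|x|)$ uniformly in $t$ to get $L_1\le cH(x)H(y)(1+|x|)^d$ --- slightly cruder than your ``governed by the left endpoint'' heuristic but leading to the same conclusion.
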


Next, we will make some remarks on our main results above.
\begin{itemize}
\item[(1)]
In order to make the presentation and proofs in this paper as clear and readable as possible, we decided to focus on the case when $X:=\{X_t\}_{t\ge 0}$ is the rotationally invariant $\alpha$-stable process. This choice also makes our contribution accessible to a wider audience. However, we believe that the arguments we propose apply to a large class of symmetric jump processes.
In fact, it is straightforward to observe that the expression \eqref{l2-4-2} for the heat kernel $p(t,x,y)$ relies solely on the strong Markov property of $X$ and the Feynman-Kac formula \eqref{e1-4}. Using this, along with the L\'evy system \eqref{l2-1-1} and Dirichlet heat kernel estimates for more general symmetric jump processes established in \cite{GKK, KK}, it is possible to apply suitable iteration arguments (by choosing an appropriate domain $U$ when applying \eqref{l2-4-2}) to derive two-sided estimates for the Schr\"odinger heat kernel $p(t,x,y)$ associated with such processes.

\item [(2)] As mentioned above, the expression \eqref{l2-4-2} serves as the starting point for our proofs. It allows us to derive appropriate estimates for the kernel $p(t,x,y)$ by investigating the probabilities that the process $X$ visits various positions where the potential $V$ takes specific values. In contrast to \cite{CW1}, where the classical Schr\"odinger semigroup was studied, here we apply the L\'evy system formula \eqref{l2-1-1} to estimate the joint distribution $\left(\tau_U, X_{\tau_U}\right)$ in \eqref{l2-4-2}. Our approach here is different because our process $X$ is a jump process.  We also note that our argument in this paper differs significantly from that in \cite{KS}, where the DJP and the estimates for harmonic functions played a crucial role.

\item[(3)] According to Theorem \ref{th:main}, we know that there exists a threshold time
 $C_0t_0(|x|\wedge |y|)$ for the two-sided estimates of the heat kernel $p(t,x,y)$. Intuitively, if $t\le C_0t_0(|x|\wedge |y|)$, the typical path from $x$ to $y$ that maximizes the probability for the process $X$ evolving in the presence of the potential $V$ follows the path of the free process $X$ (i.e.,\ when $V=0$), and the effect of the potential along this path is expressed only through the local values of $V$ near $x$ and $y$. When $t>C_0t_0(|x|\wedge |y|)$, the corresponding path that maximizes the probability will first travel from $x$ to a domain near the origin, where the effect of the potential $V$ is small, and then proceed from this domain to $y$, where the effect of the potential $V$ is more significant. For example, see the integral terms in \eqref{t1-1-3}.
\end{itemize}

\ \

The rest of the paper is organized as follows. In the next section, we present some necessary preliminaries for the proof of Theorem \ref{th:main}. In particular, we establish an upper bound for $T_{t}^V 1(x)$ and some rough off-diagonal estimate of $p(t,x,y)$. Section \ref{section3} is devoted to two-sided estimates of $p(t,x,y)$ when $0<t\le C_0't_0(|x|\wedge |y|)$ for any $C_0'>0$.  In Section \ref{section4}, we verify two-sided estimates of $p(t,x,y)$ when $t\ge C_0t_0(|x|\wedge |y|)$
for some $C_0>0$. In Section \ref{section5}, we provide the proof of Example \ref{ex1}.  Finally, in Section \ref{section6}, we present the proofs of Theorems \ref{th:main} and \ref{prop:Green}, based on our findings from Sections \ref{section3} and \ref{section4}.

\section{Preliminaries}

The following identity, called the L\'evy system formula, describes the structure of jumps of the (rotationally invariant)
$\alpha$-stable process $X:=\{X_t\}_{t\ge 0}$, see e.g.\ \cite[Lemma 4.7]{CKu}.

\begin{lemma}\label{l2-1}
Suppose that $f:\R_+\times \R^d\times \R^d\to [0,\infty)$ is a nonnegative measurable  function so that
$f(s,x,x)=0$ for all $x\in \R^d$ and $s\in \R_+$. Then, for every $x\in \R^d$ and stopping time $\tau$, it holds that
\begin{equation}\label{l2-1-1}
\Ee_x\left[\sum_{s\le \tau}f (s,X_{s-},X_s )\right]=\Ee_x\left[\int_0^\tau \int_{\R^d}
f (s,X_{s-},z )\frac{C_{d,\alpha}}{|X_{s-}-z|^{d+\alpha}}\,dz \,ds\right],
\end{equation}
where $X_{s-}:=\lim\limits_{t\to s \atop t<s} X_t$ denotes the left limit of $X_{\cdot}$ at $s$.
\end{lemma}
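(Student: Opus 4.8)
The plan is to prove the Lévy system formula \eqref{l2-1-1} for the rotationally invariant $\alpha$-stable process $X$ by reducing it to the standard Lévy system of a Lévy process and then passing from deterministic times to stopping times. The process $X$ is a Lévy process whose Lévy measure is $\nu(dz) = C_{d,\alpha}|z|^{-d-\alpha}\,dz$, so its jump structure is governed by a Poisson random measure $N(ds,dz)$ on $\R_+\times(\R^d\setminus\{0\})$ with intensity measure $ds\,\nu(dz)$. The first step is to record the identity
\[
\sum_{s\le t} f(s, X_{s-}, X_s) = \int_0^t\!\!\int_{\R^d\setminus\{0\}} f(s, X_{s-}, X_{s-}+z)\,N(ds,dz),
\]
valid pathwise because $f(s,x,x)=0$ kills the ``diagonal'' contribution and each jump of $X$ at time $s$ is precisely an atom of $N$ at $(s, \Delta X_s)$ with $X_s = X_{s-}+\Delta X_s$.

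Next I would take expectations and invoke the compensation formula (Campbell's formula) for Poisson random measures: for a predictable integrand $F(s,z,\omega)\ge 0$,
\[
\Ee_x\!\left[\int_0^t\!\!\int_{\R^d\setminus\{0\}} F(s,z)\,N(ds,dz)\right] = \Ee_x\!\left[\int_0^t\!\!\int_{\R^d\setminus\{0\}} F(s,z)\,ds\,\nu(dz)\right].
\]
Applying this with $F(s,z) = f(s, X_{s-}, X_{s-}+z)$ (which is predictable since $s\mapsto X_{s-}$ is left-continuous and adapted, and $f$ is measurable) and then substituting $w = X_{s-}+z$ in the inner integral gives
\[
\Ee_x\!\left[\sum_{s\le t} f(s, X_{s-}, X_s)\right] = \Ee_x\!\left[\int_0^t\!\!\int_{\R^d} f(s, X_{s-}, w)\,\frac{C_{d,\alpha}}{|X_{s-}-w|^{d+\alpha}}\,dw\,ds\right],
\]
which is \eqref{l2-1-1} with $\tau$ replaced by the deterministic time $t$. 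In this step one should first establish the identity for bounded $f$ supported away from the diagonal (so all integrals are finite and the compensation formula applies directly), and then remove these restrictions by monotone convergence, approximating a general nonnegative $f$ from below by $f_n(s,x,y) = (f(s,x,y)\wedge n)\,\I_{\{|x-y|>1/n\}}$ and letting $n\to\infty$; the hypothesis $f(s,x,x)=0$ ensures nothing is lost on $\{x=y\}$.

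Finally, to upgrade from deterministic $t$ to an arbitrary stopping time $\tau$, I would apply the deterministic-time identity to $\tau\wedge n$ in place of $t$ for each fixed $n$ — using that $\tau\wedge n$ itself, being a bounded stopping time, can be handled by the optional-stopping/predictable-projection version of the compensation formula, or alternatively by noting that $F(s,z)\I_{\{s\le\tau\}}$ is predictable since $\{s\le\tau\}$ is a predictable set — obtaining
\[
\Ee_x\!\left[\sum_{s\le \tau\wedge n} f(s, X_{s-}, X_s)\right] = \Ee_x\!\left[\int_0^{\tau\wedge n}\!\!\int_{\R^d} f(s, X_{s-}, w)\,\frac{C_{d,\alpha}}{|X_{s-}-w|^{d+\alpha}}\,dw\,ds\right],
\]
and then letting $n\to\infty$ by monotone convergence on both sides. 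The main obstacle — really the only delicate point — is the measure-theoretic bookkeeping: verifying predictability of the integrand and justifying the use of the compensation formula for the random (stopping-time) upper limit, which is most cleanly done by the predictable-set argument $\{(s,\omega): s\le\tau(\omega)\}\in\mathcal P$ rather than by ad hoc truncation. Alternatively, since \eqref{l2-1-1} is classical, one may simply cite \cite[Lemma 4.7]{CKu} as the excerpt already does and present the above only as a sketch of why it holds.
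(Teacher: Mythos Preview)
Your sketch is correct and follows the standard route via the compensation formula for the Poisson random measure of jumps of a L\'evy process, with the predictability of $(s,\omega)\mapsto f(s,X_{s-},X_{s-}+z)\I_{\{s\le\tau\}}$ doing the real work. Note, however, that the paper does not supply its own proof of this lemma at all: it simply records the identity and cites \cite[Lemma~4.7]{CKu}, so there is no ``paper's proof'' to compare against --- your argument is precisely the kind of justification that reference contains.
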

For any open subset $D\subset \R^d$ and $x\in D$, we define by
$$
\delta_D(x):=\inf\{|x-z|: z\in
D^c\}
$$
the distance from $x\in D$ to the boundary $\partial D$.
Let $\tau_D:=\inf\{t\ge 0: X_t\notin D\}$ be the first exit time of the process
$X$ from the set $D$. Denote by $q_D(t,x,y)$ the transition density of $X$ killed upon exiting $D$.
The following two-sided estimates for $q_{B(x,R)}(t,x,y)$ essentially have been proved in \cite{CKS1}.
\begin{lemma}\label{l2-2}
For any constant $C_0'>0$ there exists a  positive constant $C_1$  such that for every $x\in \R^d$, $R>0$, $y\in B(x,R)$ and $0<t\le C_0'R^\alpha$,
\begin{equation}\label{e:heat2}
\begin{split}
q_{B(x,R)}(t,x,y)  \casymp{C_1}  \left(t^{-d/\alpha}\wedge\frac{t}{|x-y|^{d+\alpha}}\right)\left(\frac{\delta_{B(x,R)}(y)^{\alpha/2}}{\sqrt{t}}\wedge 1\right).
\end{split}
\end{equation}
\end{lemma}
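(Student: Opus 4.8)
The plan is to reduce the assertion to the known interior/boundary Dirichlet heat kernel estimates for the $\alpha$-stable process on a \emph{fixed} bounded $C^{1,1}$ domain, namely the unit ball $B(0,1)$, and then to recover uniformity in the centre $x$ and the radius $R$ by exploiting the spatial homogeneity and the scaling (self-similarity) of $X$. The reason the right-hand side takes the simple one-sided form displayed in the lemma is that $x$ is the \emph{centre} of the ball, so $\delta_{B(x,R)}(x)=R$, and since $t\le C_0'R^\alpha$ we have $\delta_{B(x,R)}(x)^{\alpha/2}/\sqrt t=(R^\alpha/t)^{1/2}\ge (C_0')^{-1/2}$; hence the boundary-decay factor attached to the variable $x$ is comparable to $1$, with a constant depending only on $C_0'$.

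\emph{Step 1: reduction to $B(0,1)$.} By the spatial homogeneity of $X$, $q_{B(x,R)}(t,x,y)=q_{B(0,R)}(t,0,y-x)$, and by the $\alpha$-stable scaling $\{R^{-1}X_{R^\alpha s}\}_{s\ge0}\overset{d}{=}\{X_s\}_{s\ge0}$ (with the corresponding scaling of killed transition densities) one gets
\[
q_{B(x,R)}(t,x,y)=R^{-d}\,q_{B(0,1)}\big(R^{-\alpha}t,\,0,\,R^{-1}(y-x)\big).
\]
Writing $s:=R^{-\alpha}t\in(0,C_0']$ and $w:=R^{-1}(y-x)\in B(0,1)$, and using $\delta_{B(x,R)}(y)=R\,\delta_{B(0,1)}(w)$, a direct substitution shows that the right-hand side of the asserted estimate equals $R^{-d}\big(s^{-d/\alpha}\wedge \frac{s}{|w|^{d+\alpha}}\big)\big(\frac{\delta_{B(0,1)}(w)^{\alpha/2}}{\sqrt s}\wedge 1\big)$. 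Hence it suffices to prove: for every $T>0$ there is $C=C(d,\alpha,T)\ge1$ such that for all $0<s\le T$ and all $w\in B(0,1)$,
\[
q_{B(0,1)}(s,0,w)\casymp{C}\Big(s^{-d/\alpha}\wedge \tfrac{s}{|w|^{d+\alpha}}\Big)\Big(\tfrac{\delta_{B(0,1)}(w)^{\alpha/2}}{\sqrt s}\wedge 1\Big).
\]

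\emph{Step 2: cite the estimate on a fixed smooth domain and specialise.} The ball $B(0,1)$ is a bounded $C^{1,1}$ open set, so the Dirichlet heat kernel estimates of \cite{CKS1} apply: for each $T>0$ there is $c=c(d,\alpha,T)\ge1$ with
\[
q_{B(0,1)}(s,u,w)\casymp{c}\Big(1\wedge\tfrac{\delta_{B(0,1)}(u)^{\alpha/2}}{\sqrt s}\Big)\Big(1\wedge\tfrac{\delta_{B(0,1)}(w)^{\alpha/2}}{\sqrt s}\Big)\Big(s^{-d/\alpha}\wedge \tfrac{s}{|u-w|^{d+\alpha}}\Big)
\]
for all $0<s\le T$ and all $u,w\in B(0,1)$. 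Taking $u=0$ gives $\delta_{B(0,1)}(0)=1$, hence $\min\{1,T^{-1/2}\}\le 1\wedge s^{-1/2}\le1$ for $0<s\le T$, so the first factor is comparable to $1$ with a constant depending only on $T$. This yields exactly the estimate at the end of Step 1 with $T=C_0'$, and unscaling via the identity of Step 1 completes the proof.

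\emph{Main obstacle.} There is essentially no analytic work once \cite{CKS1} is invoked; the only points that require care are: (i) the estimate of \cite{CKS1} is used on the \emph{whole} time interval $(0,T]$ for an arbitrary fixed finite $T$ — the constants are allowed to deteriorate as $T\to\infty$ (the small-time form is not comparable to the large-time ground-state form for very large $t$), but this is harmless here since $T=C_0'$ is a fixed constant; and (ii) after the scaling reduction all constants depend only on $d,\alpha,C_0'$ and, crucially, are \emph{independent of $x$ and $R$}. This last independence is precisely why one must pass through the fixed domain $B(0,1)$: the domain-dependent constants in \cite{CKS1} would otherwise change with the varying ball $B(x,R)$.
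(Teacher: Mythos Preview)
Your proof is correct and follows essentially the same route as the paper: reduce by translation and $\alpha$-stable scaling to the unit ball $B(0,1)$, invoke the Dirichlet heat kernel estimates of \cite{CKS1} on that fixed domain for $s\in(0,C_0']$, and then unscale. The paper's proof is more terse and does not spell out the absorption of the centre's boundary factor, but the argument is the same.
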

\begin{proof}
Due to the spatial  homogeneity  of
the process $X$, it suffices to prove \eqref{e:heat2} for $x=0$.
According to \cite[Theorem 1.1]{CKS1}, for every $C_0'>0$ there exists a positive constant $c_1$ such that for all $ 0<t\le C_0'$ and $y\in B(0,1)$,
\begin{equation}\label{l2-2-1}
\begin{split}
q_{B(0,1)}(t,0,y)  \casymp{c_1} \left(t^{-d/\alpha}\wedge\frac{t}{|y|^{d+\alpha}}\right)\left(\frac{\delta_{B(0,1)}(y)^{\alpha/2}}{\sqrt{t}}\wedge 1\right).
\end{split}
\end{equation}
Furthermore, we note that $\delta_{B(0,1)}(y)=1-|y|$ for all $y\in B(0,1)$.
Combining this  and  \eqref{l2-2-1} with the scaling property
\begin{align*}
q_{B(0,1)}(t,0,y)=R^dq_{B(0,R)}(R^\alpha t,0,Ry),\quad t>0,\ y\in B(0,1),
\end{align*} we obtain \eqref{e:heat2} immediately.
\end{proof}

 We also give a lemma which describes  the properties of $t_0(\cdot)$ and $g(\cdot)$.

\begin{lemma}\label{dbl-polyn}
Let $t_0(\cdot)$ be defined by \eqref{e1-6} and assume that $g(\cdot)$ satisfies \eqref{e1-2} and \eqref{e1-3}.
Then there are constants $C_2, q_1 >0$ such that for all $s\ge0$,
\begin{equation}\label{l4-2-1}
g(s)\le C_2(1+s)^{q_1},
\end{equation} and
\begin{equation}\label{l4-2-1b}
C_2^{-1}(1+s)^{-q_1}\le t_0(s)\le C_2\left(1+\frac{\log(2+s)}{g(s)}\right).
\end{equation}
\end{lemma}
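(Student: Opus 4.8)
The plan is to prove \eqref{l4-2-1} first, then use it together with the defining relation \eqref{e1-6} to pin down $t_0(s)$ between the two bounds in \eqref{l4-2-1b}. For \eqref{l4-2-1}, the doubling hypothesis \eqref{e1-2} is the whole story: iterating $g(2r) \le C_1 g(r)$ gives $g(2^k r) \le C_1^k g(r)$, and since $g$ is non-decreasing with $g \ge 1$, one deduces for $s \ge 1$ that with $k := \lceil \log_2 s \rceil$ we have $g(s) \le g(2^k) \le C_1^k g(1) \le C_1 g(1) \cdot C_1^{\log_2 s} = C_1 g(1)\, s^{\log_2 C_1}$. Absorbing the case $0 \le s \le 1$ (where $g(s) \le g(1)$ by monotonicity) and adjusting constants yields \eqref{l4-2-1} with $q_1 = \log_2 C_1$ and a suitable $C_2$; I would phrase the final bound in terms of $(1+s)$ rather than $s$ to cover $s$ near $0$ cleanly.

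For the lower bound $t_0(s) \ge C_2^{-1}(1+s)^{-q_1}$ in \eqref{l4-2-1b}: recall from the discussion after \eqref{e1-6} that $t < t_0(s)$ is equivalent to $e^{-tg(s)} > t/(1+s)^\alpha$. So it suffices to exhibit some explicit $t_* \asymp (1+s)^{-q_1}$ at which $e^{-t_* g(s)} > t_*/(1+s)^\alpha$; then $t_0(s) \ge t_*$. Taking $t_* = c(1+s)^{-q_1}$ for small $c>0$: the left side is $e^{-c(1+s)^{-q_1}g(s)} \ge e^{-cC_2}$ by \eqref{l4-2-1}, a positive constant bounded below, while the right side is $c(1+s)^{-q_1-\alpha} \le c$, which is small — so for $c$ small enough the inequality holds uniformly in $s$, giving the claim (after renaming constants so that the same $C_2$ works). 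For the upper bound $t_0(s) \le C_2(1 + \log(2+s)/g(s))$: symmetrically, it suffices to find $t^* \asymp 1 + \log(2+s)/g(s)$ with $e^{-t^* g(s)} < t^*/(1+s)^\alpha$, because then $t_0(s) \le t^*$. Choose $t^* = A(1 + \log(2+s)/g(s))$ for a large constant $A$. Then $t^* g(s) \ge A \log(2+s) \ge \alpha \log(1+s) + \log A$ once $A \ge \alpha + 1$ (using $g \ge 1$ and crude estimates), so $e^{-t^* g(s)} \le e^{-\log A}(1+s)^{-\alpha} = A^{-1}(1+s)^{-\alpha}$; meanwhile $t^*/(1+s)^\alpha \ge A(1+s)^{-\alpha} \ge A^{-1}(1+s)^{-\alpha}$ is comparable but with the right sign once $A$ is large, so $e^{-t^*g(s)} < t^*/(1+s)^\alpha$ as required. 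A little care is needed to get a genuine strict inequality and to make $A$ absolute; splitting into the regimes $g(s)$ small (i.e.\ $s$ bounded) and $g(s)$ large makes this routine.

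The only mild subtlety — and the step I would be most careful about — is the bookkeeping of constants so that a single $C_2$ serves in all three displayed inequalities simultaneously, and ensuring the upper-bound argument for $t_0(s)$ survives near $s = 0$, where $\log(2+s)/g(s)$ is a genuine nonzero term and the ``$+1$'' is what keeps $t^*$ from degenerating; there the claim is just that $t_0$ is bounded on compacts, which is immediate from continuity of the two curves in \eqref{e1-6} and is absorbed into $C_2$. No deep input is needed beyond \eqref{e1-2}, \eqref{e1-3}, and monotonicity of $g$.
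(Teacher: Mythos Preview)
Your proposal is correct and follows essentially the same approach as the paper. The proof of \eqref{l4-2-1} via iterated doubling is identical, and the lower bound for $t_0(s)$ uses the same test-point idea (the paper plugs in $t=(a_0 g(s))^{-1}$ rather than your $t=c(1+s)^{-q_1}$, but the logic is the same). The only minor difference is the upper bound for $t_0(s)$: the paper takes the logarithm of the defining relation in \eqref{e1-6} to get $t_0(s)g(s)+\log t_0(s)=\alpha\log(1+s)$ and reads off $t_0(s)\le \alpha\log(1+s)/g(s)$ whenever $t_0(s)\ge 1$, whereas you use another test-point argument with $t^*=A(1+\log(2+s)/g(s))$; both work and the difference is cosmetic.
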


\begin{proof}
If $s > 1$, then $s \in (2^n,2^{n+1}]$ for some $n \in \left\{0,1,2,\ldots\right\}$. By the monotonicity of $g$ and \eqref{e1-2} (we may assume that  $c_0 >1$, where $c_0$ denotes the comparability constant in \eqref{e1-2}),
\[
g(s) \le g(2^{n+1}) \le c_0^{n+1} g(1) = c_0 g(1) 2^{n\log_2 c_0} \le c_1 s^{q_1} \le c_1 (1+s)^{q_1}
\]
with $c_1:=c_0 g(1)$ and $q_1:=\log_2 c_0$. If $s \in [0,1]$, then $g(s) \le g(1) \le c_1 \le c_1 (1+s)^{q_1}$.
By all of the above estimates we obtain \eqref{l4-2-1}.

By
\eqref{e1-6} it holds that $\exp(-t_0(s)g(s))=t_0(s)(1+s)^{-\alpha}$, so  $t_0(s)g(s)+\log t_0(s)=\alpha \log(1+s)$. If $t_0(s)\le c_2\log 2$ holds for some $c_2>0$ that satisfies $c_2\log 2\ge 1$,
then the second inequality in \eqref{l4-2-1b} follows.
If $t_0(s)\ge c_2\log 2(\ge1)$, then
$t_0(s)=\frac{\alpha \log (1+s)-\log t_0(s)}{g(s)}\le \frac{\alpha \log(1+s)}{g(s)}$
for $s>0$. Putting both estimates above together, we prove the second inequality in \eqref{l4-2-1b}.

By the fact that $g(r)\ge1$ for all $r\ge0$, there is $a_0>0$ large enough such that for all $s>0$,
\begin{align}\label{l4-2-1c}
(a_0g(s))^{-1}(1+s)^{-\alpha}\le e^{-a_0^{-1}}=\exp\left(-(a_0g(s))^{-1}g(s)\right).
\end{align}
By \eqref{e1-6} again, $t_0(s)\ge (a_0g(s))^{-1}\ge c_3(1+s)^{-q_1}$ for every $s>0$.
Then, we prove the first inequality  in \eqref{l4-2-1b}.
\end{proof}

Next, we present upper bounds for $T_t^V\I(x)$, which will be frequently used in the paper.

\begin{lemma}\label{L:1.1}
There exist positive constants  $C_3, C_4$  such that for any $t>0$ and $x\in \R^d$,
\begin{equation}\label{est1}
T_t^V 1(x)\le C_3 \left(\exp\left(-C_4tg(|x|)\right)+ \frac{t}{(1+|x|)^{\alpha}}\right).
 \end{equation}
 \end{lemma}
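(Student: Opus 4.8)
The plan is to estimate $T_t^V1(x) = \Ee_x\big[\exp(-\int_0^t V(X_s)\,ds)\big]$ by splitting the probability space according to whether the process $X$ stays, up to time $t$, inside a ball around $x$ on which $V$ is comparable to $g(|x|)$, or whether it exits that ball. Fix $x$ with $|x|\ge 2$ (the case $|x|\le 2$ is trivial since then $V$ is bounded on the relevant region and $g(|x|)$ is comparable to a constant, so the bound holds after adjusting constants). Put $R := |x|/2$, so that for every $z\in B(x,R)$ we have $|z|\in[|x|/2,3|x|/2]$ and hence, by monotonicity of $g$, the doubling property \eqref{e1-2} and \eqref{e1-3}, $V(z)\ge C_2 g(|z|)\ge C_2 g(|x|/2)\ge c\, g(|x|)$ for a constant $c>0$. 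Then on the event $\{\tau_{B(x,R)}>t\}$ we have $\int_0^t V(X_s)\,ds \ge c\,t\,g(|x|)$, so this part contributes at most $e^{-c t g(|x|)}$.

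For the complementary event $\{\tau_{B(x,R)}\le t\}$, I would simply bound $\exp(-\int_0^t V(X_s)\,ds)\le 1$ and estimate $\Pp_x(\tau_{B(x,R)}\le t)$. For the $\alpha$-stable process this exit probability satisfies $\Pp_x(\tau_{B(x,R)}\le t)\le c\, t/R^\alpha$ uniformly (this follows, e.g., from the L\'evy system formula \eqref{l2-1-1} together with the fact that the free heat kernel away from the diagonal is bounded by $c\,t/|x-y|^{d+\alpha}$ as in \eqref{e:heat1}, or directly from standard exit-time estimates for stable processes; one can also split $\{\tau_{B(x,R)}\le t\}$ into the sub-events $\{\tau\le t, X_\tau \notin B(x,2R)\}$, handled by the L\'evy system, and $\{\tau\le t, X_\tau\in B(x,2R)\setminus B(x,R)\}$, handled by the heat kernel bound). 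Since $R=|x|/2$, this gives $\Pp_x(\tau_{B(x,R)}\le t)\le c\, t/(1+|x|)^\alpha$ for $|x|\ge 2$, which is exactly the second term in \eqref{est1}.

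Adding the two contributions yields $T_t^V1(x)\le e^{-c t g(|x|)} + c\,t/(1+|x|)^\alpha$, which is \eqref{est1} after renaming constants. The main obstacle is obtaining the clean, dimension-free bound $\Pp_x(\tau_{B(x,R)}\le t)\le c\,t/R^\alpha$ valid for \emph{all} $t>0$, not just small $t$; for large $t$ relative to $R^\alpha$ the inequality is vacuous (the right side exceeds $1$), so it is really only the regime $t\lesssim R^\alpha$ that needs care, and there it follows from the L\'evy-system estimate of the probability of a single large jump out of the ball combined with the off-diagonal heat kernel bound \eqref{e:heat1} for the "small perturbation" part of the exit. A minor point to check is the uniformity of the comparability constant $c$ in $V(z)\ge c\,g(|x|)$ on $B(x,|x|/2)$, which is where the doubling assumption \eqref{e1-2} is used.
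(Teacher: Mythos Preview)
Your proposal is correct and follows the same strategy as the paper: split on whether $X$ exits $B(x,|x|/2)$ by time $t$, use the lower bound $V\ge c\,g(|x|)$ on that ball for the first piece, and control the second piece by $\Pp_x(\tau_{B(x,|x|/2)}\le t)\le ct/(1+|x|)^\alpha$. The paper establishes this exit bound inline by splitting on the \emph{terminal} position $X_t$ (its $I_{21},I_{22}$: either $X_t\notin B(x,|x|/3)$, controlled directly by \eqref{e:heat1}, or $\tau_U<t$ and $X_t\in B(x,|x|/3)$, controlled via the strong Markov property at $\tau_U$ and \eqref{e:heat1}), which is a slightly cleaner route than your sketched split on the exit position $X_\tau$---the near-annulus piece $\{X_\tau\in B(x,2R)\setminus B(x,R)\}$ is not straightforwardly ``handled by the heat kernel bound'' alone and would need the Dirichlet-kernel boundary decay, whereas the paper's $X_t$-based split avoids this entirely.
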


\begin{proof}
Since
\begin{align*}
\inf_{x\in B(0,2),t>0}\left(\exp\left(-C_4tg(|x|)\right)+ \frac{t}{(1+|x|)^{\alpha}}\right)\ge c_0,
\end{align*}
\eqref{est1} holds trivially for every $x\in \R^d$ with $|x|\le 2$ by taking $C_3$ large enough.
Then, it suffices to prove \eqref{est1} for every $x\in \R^d$ with $|x|\ge 2$. Set $U:=B(x,|x|/2)$.
According to \eqref{e1-4},
\begin{align*}
  T^V_t1(x)
&=\Ee_x\left[\exp\left(-\int_0^t V(X_s)\,ds\right)\right]\\
&=\Ee_x\left[\exp\left(-\int_0^t V(X_s)\,ds\right) \I_{\{\tau_U\ge t\}}\right]+ \Ee_x\left[\exp\left(-\int_0^t V(X_s)\,ds\right) \I_{\{\tau_U< t\}}\right]\\
&=:I_1+I_2.\end{align*}

Obviously, by \eqref{e1-2} and \eqref{e1-3},
$$I_1\le \exp\left(-t \inf_{z\in U} V(z)\right)\le \exp\left(-c_1tg(|x|)\right).$$

On the other hand,
\begin{align*}
I_2 &= \Ee_x \left[\exp\left(-\int_0^t V(X_s)\,ds\right)\I_{\{\tau_U<t,\, X_t\in B(x,|x|/3)\}}\right] \\
 & \ \ \ \ + \Ee_x\left[\exp\left(-\int_0^t V(X_s)\,ds\right)\I_{\{ X_t\notin B(x,|x|/3)\}}\right]\\
&=:I_{21}+I_{22}.
\end{align*}
By the strong Markov property,
\begin{align*}I_{21}&\le \Ee_x\left[\Pp_{X_{\tau_U}}(X_{t-\tau_U}\in B(x,|x|/3))\I_{\{\tau_U<t\}}\right] \le \sup_{ 0\le s\le t \atop z\in B(x,|x|/2)^c}\int_{B(x,|x|/3)}q(s,z,y)\,dy\\
&\le c_2\sup_{ z\in B(x,|x|/2)^c}\int_{B(x,|x|/3)} t|z-y|^{-d-\alpha}\,dy \le \frac{c_3t}{(1+|x|)^{\alpha}}, \end{align*}
where in the third inequality we used \eqref{e:heat1}.   Similarly, by using \eqref{e:heat1} once again,
$$I_{22}\le \int_{B(x,|x|/3)^c} q(t,x,y)\,dy\le \frac{c_4t}{(1+|x|)^{\alpha}}.$$
Combining all the estimates above, we obtain the desired assertion. \end{proof}

\begin{corollary}\label{c2-2}
There exist positive constants $C_5$, $C_6$ and $C_7$ such that for all $x\in \R^d$ and $t\ge C_5t_0(|x|)$,
\begin{equation}\label{l4-2-3a-}
T_{t}^V 1(x)\le C_6
\log(2+|x|)(1+|x|)^{-\alpha}\le C_7(1+|x|)^{-\alpha/2}.
\end{equation}
\end{corollary}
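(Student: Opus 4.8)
The plan is to use Lemma \ref{L:1.1} together with the estimate for $t_0(|x|)$ from Lemma \ref{dbl-polyn}. The statement to prove, Corollary \ref{c2-2}, says that for $t \ge C_5 t_0(|x|)$ the bound on $T_t^V 1(x)$ improves: the exponential term $e^{-C_4 t g(|x|)}$ is dominated by a polynomially decaying quantity. So the first task is to check that, when $t$ is at least a constant multiple of $t_0(|x|)$, the term $\exp(-C_4 t g(|x|))$ is comparable to (in fact bounded by a constant times) $t/(1+|x|)^\alpha$ or $\log(2+|x|)/(1+|x|)^\alpha$. The second task is to bound $t/(1+|x|)^\alpha$ by $\log(2+|x|)/(1+|x|)^\alpha$, which is \emph{not} true for all $t$ — so in fact the right reading is that we must control $T_t^V 1(x)$ differently for large $t$, namely by combining the monotonicity of $T_t^V 1$ (which is non-increasing in $t$, since $V \ge 0$) with the value at $t = C_5 t_0(|x|)$.

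Concretely, first I would note that $t \mapsto T_t^V 1(x)$ is non-increasing in $t$: indeed $T_{t+s}^V 1(x) = T_t^V(T_s^V 1)(x) \le T_t^V 1(x)$ because $T_s^V 1 \le 1$ (as $V \ge 0$ forces $T_s^V 1(x) = \Ee_x[\exp(-\int_0^s V(X_r)dr)] \le 1$) and $T_t^V$ is positivity-preserving. Hence it suffices to bound $T_t^V 1(x)$ at the single time $t = C_5 t_0(|x|)$; for larger $t$ the value only decreases. So I would apply Lemma \ref{L:1.1} at $t = C_5 t_0(|x|)$:
\[
T_{C_5 t_0(|x|)}^V 1(x) \le C_3\left(\exp(-C_4 C_5 t_0(|x|) g(|x|)) + \frac{C_5 t_0(|x|)}{(1+|x|)^\alpha}\right).
\]
Now I use the definition \eqref{e1-6}: $\exp(-t_0(|x|) g(|x|)) = t_0(|x|)/(1+|x|)^\alpha$, and since $g \ge 1$, for the $C_5$-scaled version $\exp(-C_4 C_5 t_0(|x|) g(|x|)) = (\exp(-t_0(|x|)g(|x|)))^{C_4 C_5 / 1}$ raised appropriately — choosing $C_5$ so that $C_4 C_5 \ge 1$, this is at most $\exp(-t_0(|x|)g(|x|)) = t_0(|x|)/(1+|x|)^\alpha$. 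So both terms are bounded by a constant times $t_0(|x|)/(1+|x|)^\alpha$. Finally, the upper bound $t_0(|x|) \le C_2(1 + \log(2+|x|)/g(|x|)) \le C_2(1 + \log(2+|x|)) \le c\log(2+|x|)$ (using $g \ge 1$ and $\log(2+|x|) \ge \log 2 > 0$) from \eqref{l4-2-1b} gives $T_{C_5 t_0(|x|)}^V 1(x) \le C_6 \log(2+|x|)(1+|x|)^{-\alpha}$, and by monotonicity this holds for all $t \ge C_5 t_0(|x|)$. The second inequality in \eqref{l4-2-3a-}, namely $\log(2+|x|)(1+|x|)^{-\alpha} \le c(1+|x|)^{-\alpha/2}$, is immediate since $\log(2+|x|) = o((1+|x|)^{\alpha/2})$ and hence $\log(2+|x|) \le c(1+|x|)^{\alpha/2}$ for a suitable constant $c$ depending on $\alpha$.

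The only genuinely delicate point is the bookkeeping with the constant $C_5$: one must pick $C_5$ large enough that $C_4 C_5 \ge 1$ (so that raising $\exp(-t_0 g)$ to the power $C_4 C_5$ only makes it smaller), while keeping track of the fact that the factor $C_5$ multiplying $t_0(|x|)$ in the second term of Lemma \ref{L:1.1} is harmless since it is absorbed into the final constant $C_6$. There is no real obstacle here; the corollary is essentially a direct substitution of the bound $t_0(|x|) \le c\log(2+|x|)$ and the identity defining $t_0$ into Lemma \ref{L:1.1}, plus the elementary monotonicity of $t \mapsto T_t^V 1(x)$ to extend from $t = C_5 t_0(|x|)$ to all larger $t$.
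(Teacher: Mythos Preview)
Your proposal is correct and follows essentially the same approach as the paper: apply Lemma \ref{L:1.1} at the threshold time $C_5 t_0(|x|)$, use the defining identity \eqref{e1-6} together with the choice $C_4 C_5 \ge 1$ to control the exponential term by $t_0(|x|)/(1+|x|)^{\alpha}$, invoke \eqref{l4-2-1b} to bound $t_0(|x|)$ by $c\log(2+|x|)$, and finally use the monotonicity $T_{t+s}^V 1 \le T_t^V 1$ to pass to all larger times. The paper organizes these steps in a slightly different order (it applies the semigroup monotonicity at the end rather than the beginning) and writes the exponential estimate as \eqref{c2-2-2}, but the substance is identical.
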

\begin{proof}
According to \eqref{est1}, for every $\tilde C_0>0$ and $x\in \R^d$,
\begin{equation}\label{c2-2-1}
T_{\tilde C_0t_0(|x|)}^V1(x)\le c_1\left(\exp\left(-c_2\tilde C_0t_0(|x|)g(|x|)\right)+\frac{\tilde C_0t_0(|x|)}{(1+|x|)^\alpha}\right).
\end{equation}

By \eqref{e1-6} and \eqref{l4-2-1b}, it holds that for all $x\in \R^d$,
\begin{equation}\label{c2-2-2}
\begin{split}
 \exp\left(-c_{2}\tilde C_{0}t_0(|x|)g(|x|)\right)&=
\left(e^{-t_0(|x|)g(|x|)}\right)^{c_{2}\tilde C_{0}}\\
&=\left(\frac{t_0(|x|)}{(1+|x|)^\alpha}\right)^{c_{2}\tilde C_{0}}
\le c_{3}[\log(2+|x|)]^{c_2\tilde C_0}(1+|x|)^{-c_{2}\tilde C_{0}\alpha}.
\end{split}
\end{equation}
On the other hand, by \eqref{l4-2-1b} again,
\begin{align*}
\frac{t_0(|x|)}{(1+|x|)^\alpha}\le c_4\log(2+|x|)(1+|x|)^{-\alpha}.
\end{align*}
Choosing $\tilde C_0>c_2^{-1}$ and putting both estimates above into
\eqref{c2-2-1} yield that
$$
T_{\tilde C_0t_0(|x|)}^V1(x)\le c_5\log(2+|x|)(1+|x|)^{-\alpha}.
$$

Therefore, for every $t\ge \tilde C_0t_0(|x|)$,
\begin{align*}
T_t^V 1(x)&=T_{\tilde C_0t_0(|x|)}^V\left(T_{t-\tilde C_0t_0(|x|)}^V 1\right)(x)\\
&\le T_{\tilde C_0t_0(|x|)}^V 1(x)\le c_5\log(2+|x|)(1+|x|)^{-\alpha}.
\end{align*} The proof of the first inequality in the desired assertion is complete. The second assertion is a direct consequence of the first one. \end{proof}

\begin{remark}
According to \eqref{e1-6}, for every $c_0>0$, $x\in \R^d$ and $0<t\le c_0t_0(|x|)$,
\begin{equation}\label{r2-1-1}
\frac{t}{(1+|x|)^{\alpha}}=c_0\frac{c_0^{-1}t}{(1+|x|)^{\alpha}}\le
c_0\exp\left(-c_0^{-1}tg(|x|)\right).
\end{equation}
Thus, by \eqref{est1}, for any $c_0>0$, there exist positive constants  $c_1$ and $c_2$
(which may depend on $c_0$) such that for any $x\in \R^d$ and $0<t\le c_0t_0(|x|)$,
\begin{equation}\label{e:note2}
T_t^V 1(x)\le  c_1 \exp(-c_2tg(|x|)).
\end{equation}
\end{remark}

At the end of this section, we prove an upper bound for $p(t,x,y)$ when $|x-y|\ge Ct^{1/\alpha}$.
For this we need the following representation of $p(t,x,y)$. Since the proof is almost the same as that \cite[Lemma 2.1]{CW1}
(it only uses the strong Markov property of the process $X$ and the Feynman--Kac formula \eqref{e1-4}), we omit it here.

\begin{lemma}\label{l2-6}
Suppose $U$ is a domain in $\R^d$. Then, for every $x\in U$ and $y\notin \bar U$,
\begin{equation}\label{l2-4-2}
p(t,x,y)=\Ee_x\left[\exp\left(-\int_0^{\tau_U}V(X_s)\,ds\right)\I_{\{\tau_U\le t\}}p\left(t-\tau_U,X_{\tau_U},y\right)\right].
\end{equation}
\end{lemma}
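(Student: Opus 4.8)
The plan is to obtain the representation \eqref{l2-4-2} directly from the Feynman--Kac formula \eqref{e1-5}--\eqref{e1-4} combined with the strong Markov property of $X$ at the stopping time $\tau_U$. Fix $x \in U$, $y \notin \bar U$, and $t>0$. Since $y$ lies in the open set $(\bar U)^c$, for small $\varepsilon>0$ the ball $B(y,\varepsilon)$ is disjoint from $\bar U$; I would first write, for a nonnegative $f$ supported in $B(y,\varepsilon)$, the Feynman--Kac expression
\[
T_t^V f(x) = \Ee_x\left[\exp\left(-\int_0^t V(X_s)\,ds\right) f(X_t)\right].
\]
Because the integrand vanishes unless $X_t \in B(y,\varepsilon) \subset U^c$, and $X_0 = x \in U$ forces $\tau_U \le t$ on that event (the path must have left $U$ by time $t$), one may insert the indicator $\I_{\{\tau_U \le t\}}$ for free. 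Then split the time integral of the potential at $\tau_U$ and apply the strong Markov property at $\tau_U$: conditionally on $\mathcal F_{\tau_U}$, the process $\{X_{\tau_U + r}\}_{r \ge 0}$ is an independent copy of $X$ started at $X_{\tau_U}$, so
\[
\Ee_x\left[\exp\left(-\int_0^t V(X_s)\,ds\right) f(X_t)\,\I_{\{\tau_U\le t\}}\right]
= \Ee_x\left[\exp\left(-\int_0^{\tau_U} V(X_s)\,ds\right)\I_{\{\tau_U\le t\}}\, \bigl(T_{t-\tau_U}^V f\bigr)(X_{\tau_U})\right].
\]
Rewriting $T_{t-s}^V f(z) = \int p(t-s,z,w) f(w)\,dw$ on the right and $T_t^V f(x) = \int p(t,x,w) f(w)\,dw$ on the left, and then letting $f$ range over an approximate identity at $y$ (using joint continuity of $p$ from \eqref{e1-5} and dominated convergence, justified by the bound $p \le q$ and \eqref{e:heat1}), yields \eqref{l2-4-2}.

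I would organize the steps as: (i) record the Feynman--Kac identity for test functions $f$ concentrated near $y$; (ii) observe $\{f(X_t)\neq 0\} \subset \{X_t \in U^c\} \subset \{\tau_U \le t\}$ and insert the indicator; (iii) decompose $\int_0^t = \int_0^{\tau_U} + \int_{\tau_U}^t$ and condition on $\mathcal F_{\tau_U}$, invoking the strong Markov property and quasi-left-continuity so that $X_{\tau_U}$ is well-defined and the shifted path is a fresh copy of $X$; (iv) identify the inner conditional expectation as $T_{t-\tau_U}^V f(X_{\tau_U})$; (v) express both sides through the kernel $p$ and pass to the limit $f \to \delta_y$. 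Throughout I would use that $p$ is jointly continuous and that $0 \le p \le q$ with $q$ satisfying \eqref{e:heat1}, which supplies the domination needed for the limiting argument and guarantees all expectations are finite.

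The only genuinely delicate point is the measure-theoretic care in step (v): passing from the identity tested against $f \in C_c\bigl(B(y,\varepsilon)\bigr)$ to the pointwise identity at $y$. For the left-hand side this is immediate from joint continuity of $p$. For the right-hand side one must push the limit inside the expectation $\Ee_x[\,\cdot\,]$; here the factor $\exp(-\int_0^{\tau_U} V)\I_{\{\tau_U \le t\}} \le 1$ and the bound $T_{t-\tau_U}^V f(X_{\tau_U}) = \int p(t-\tau_U, X_{\tau_U}, w) f(w)\,dw \le c\,(t-\tau_U)\,|X_{\tau_U}-y|^{-d-\alpha}$ on $\{X_{\tau_U} \notin B(y,\varepsilon/2)\}$ — valid since $|X_{\tau_U} - w| \ge \varepsilon/2$ there and by \eqref{e:heat1} — provide an integrable dominating function once one also handles the (null, or at least controllable) event that the process jumps directly into $B(y,\varepsilon)$. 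Since this is essentially identical to the argument for \cite[Lemma 2.1]{CW1}, which relies only on the strong Markov property of $X$ and \eqref{e1-4}, I would simply cite that reference and indicate that the same reasoning applies verbatim in the present jump setting.
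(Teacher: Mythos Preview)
Your proposal is correct and matches the paper's approach exactly: the paper omits the proof entirely, stating only that it ``is almost the same as that [of] \cite[Lemma 2.1]{CW1} (it only uses the strong Markov property of the process $X$ and the Feynman--Kac formula \eqref{e1-4}),'' which is precisely the argument you have outlined and the reference you cite at the end.
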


\begin{lemma} \label{L:2.1}
For any $C_0'>0$ there is a constant $C_{8}>0$ such that for all $x,y\in \R^d$ and $t>0$ with $|x-y|\ge 2C_0't^{1/\alpha}$,
\begin{equation}\label{l2-5-1}
p(t,x,y)\le  \frac{C_{8}t}{|x-y|^{d+\alpha}}\left(1\wedge\frac{1}{ t\max\{g(|x|),g(|y|)\}}\right).
\end{equation}
\end{lemma}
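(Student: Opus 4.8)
The plan is to split according to which of $x,y$ has the larger potential and to use the representation \eqref{l2-4-2} with a ball centred at the point with the smaller potential, chosen so that $y$ lies outside. By symmetry of $p(t,x,y)$ we may assume $g(|x|) \le g(|y|)$, so that $\max\{g(|x|),g(|y|)\} = g(|y|)$ and it suffices to bound $p(t,x,y)$ by $C_8 t |x-y|^{-d-\alpha}(1 \wedge (tg(|y|))^{-1})$. First, the bound $p(t,x,y) \le c\, t|x-y|^{-d-\alpha}$ is immediate from $p \le q$ and \eqref{e:heat1} together with $|x-y| \ge 2C_0' t^{1/\alpha}$, which forces the minimum in \eqref{e:heat1} to be the off-diagonal term. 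So the content is the extra factor $(tg(|y|))^{-1}$, which we only need when $tg(|y|) \ge 1$.

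To get that factor, apply Lemma \ref{l2-6} with $U := B(y, |x-y|/2)$; then $x \notin \bar U$ (since $|x-y| > |x-y|/2$) and $y \in U$, so, using symmetry,
\[
p(t,x,y) = p(t,y,x) = \Ee_y\left[\exp\left(-\int_0^{\tau_U}V(X_s)\,ds\right)\I_{\{\tau_U \le t\}}\, p(t-\tau_U, X_{\tau_U}, x)\right].
\]
On $U$ the potential is bounded below: for $z \in U$ we have $|z| \ge |y| - |x-y|/2$, and since $|x-y| \ge 2C_0' t^{1/\alpha}$ is only a lower bound we cannot directly control $|z|$ from below in general — so instead I would split $U$ further. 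The cleaner route: bound $p(t-\tau_U, X_{\tau_U}, x) \le c\,(t-\tau_U)|X_{\tau_U}-x|^{-d-\alpha} \le c\, t |x-y|^{-d-\alpha}$ whenever $|X_{\tau_U} - x| \ge |x-y|/2$ (which holds because $X_{\tau_U}$ may be far, but when it is close to $x$ one uses instead the trivial bound $p \le c(t-\tau_U)^{-d/\alpha}$ combined with $\tau_U \ge $ a fixed multiple of $|x-y|^\alpha$). The key point is that exiting $U$ costs time: since $\delta_U(y) = |x-y|/2 \ge C_0' t^{1/\alpha}$, Lemma \ref{l2-2} (or a standard exit-time estimate for the stable process) gives $\Pp_y(\tau_U \le t) \le c\, t |x-y|^{-\alpha} \wedge 1$, i.e. an extra small factor $t|x-y|^{-\alpha}$. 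Combining, $p(t,x,y) \le c\, t|x-y|^{-d-\alpha}\cdot (t|x-y|^{-\alpha} \wedge 1) \cdot \sup_{z} T^V_{?}1(z)$-type terms; but this is not yet $g(|y|)^{-1}$.

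To extract $g(|y|)^{-1}$ rather than $t|x-y|^{-\alpha}$, I would instead keep the Feynman–Kac weight and argue as follows: on the event $\{\tau_U \le t\}$ the process has spent the interval $[0,\tau_U]$ inside $U \subset B(y,|x-y|/2)$, and I want to say that during a definite fraction of $[0,t]$ the path sits in a region where $V \gtrsim g(|y|)$. This is where the main obstacle lies: near $y$ the potential is $\asymp g(|y|)$ only on, say, $B(y,|y|/2)$, and $|x-y|/2$ may be much larger than $|y|/2$, so $U$ is not contained in that good region. The fix is a case distinction on whether $|x-y| \le |y|/2$ or not. If $|x-y| \le |y|/2$, then $U \subset B(y,|y|/4) \subset B(y,|y|/2)$, so $\inf_U V \ge c\, g(|y|)$ by \eqref{e1-2}–\eqref{e1-3}, giving $e^{-\int_0^{\tau_U} V} \le e^{-c\,\tau_U g(|y|)}$; combined with $\tau_U \gtrsim |x-y|^\alpha \gtrsim t$ on $\{\tau_U \le t\}$... no — rather, one integrates: $\Ee_y[e^{-c\tau_U g(|y|)}\I_{\{\tau_U\le t\}}] \le \Ee_y[e^{-c\tau_U g(|y|)}]$ and uses the exit-time density estimate from Lemma \ref{l2-2} to show this is $\le c (1 \wedge (tg(|y|))^{-1})\cdot(t|x-y|^{-\alpha})$ after pairing with the $p(t-\tau_U,X_{\tau_U},x) \le c\,t|x-y|^{-d-\alpha}$ bound. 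If instead $|x-y| > |y|/2$, then $|x-y| \asymp |x| \vee |y| \gtrsim |y|$, so $g(|y|) \le g(c|x-y|) \le c'\, g(|x-y|)$ by \eqref{e1-2}, hence $(tg(|y|))^{-1} \gtrsim (tg(|x-y|))^{-1}$, and since also $|x-y| \ge 2C_0' t^{1/\alpha}$ we can afford to "spend" $g(|x-y|)$: here one applies \eqref{l2-4-2} with a ball of radius comparable to $t^{1/\alpha}$ around $y$ and uses that $V \gtrsim g(|y|) \asymp g(|x-y|)$ on an intermediate annulus together with Corollary \ref{c2-2} / Lemma \ref{L:1.1} to absorb the factor. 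In both cases the remaining $|x-y|^{-d-\alpha}$ comes from the L\'evy system jump estimate \eqref{l2-1-1} bounding the probability of the single large jump needed to reach the vicinity of $x$, exactly as in the proof of Lemma \ref{L:1.1}. The main obstacle, to reiterate, is matching the scale of the ball $U$ (forced to have radius $\gtrsim |x-y|$ so that $x \notin \bar U$) with the scale $|y|$ on which $V \asymp g(|y|)$; the case split on $|x-y|$ versus $|y|$, combined with the doubling property \eqref{e1-2}, is what resolves it.
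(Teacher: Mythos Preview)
Your overall strategy --- apply the representation \eqref{l2-4-2} from the point with larger potential, invoke the L\'evy system \eqref{l2-1-1}, and use $\inf_U V\gtrsim g(|y|)$ --- is the right one and matches the paper's.  But two steps do not go through as written.

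First, the claim that on $\{\tau_U\le t,\ X_{\tau_U}\text{ near }x\}$ one may use ``$\tau_U\ge$ a fixed multiple of $|x-y|^\alpha$'' is false: the $\alpha$-stable process can exit $U=B(y,|x-y|/2)$ at arbitrarily small times by a single long jump, so $\tau_U$ has no deterministic lower bound.  This event --- an early big jump landing near $x$ --- is precisely what has to be handled, and it is not.

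Second, you never actually extract the factor $(tg(|y|))^{-1}$.  Neither $\Pp_y(\tau_U\le t)$ nor a raw bound on $\Ee_y[e^{-c\tau_U g(|y|)}]$ gives it.  The mechanism is the integral $\int_0^t e^{-csg(|y|)}\,ds\le t\wedge g(|y|)^{-1}$, obtained after expressing the exit via the L\'evy system and the Dirichlet kernel bound \eqref{e:heat2} for $q_U$ (including its boundary factor $(\delta_U(z)^{\alpha/2}/\sqrt{s})\wedge 1$), and then integrating in $(s,z,u)$.  Your sketch never reaches this computation.

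The paper carries it out, but with a different and simpler choice of ball: it takes the \emph{small} ball $U=B(y,c_0t^{1/\alpha})$, $c_0=C_0'/6$, together with the reference ball $W=B(y,|x-y|/3)$, and splits the exit according to whether $X_{\tau_U}\in W\setminus U$ or $X_{\tau_U}\in W^c$.  Two things are gained.  Lemma~\ref{l2-2} applies to $q_U$ for every $0<s\le t$, which is what makes the time-integration tractable.  And the case split on $|x-y|$ versus $|y|$ you propose becomes unnecessary: once $|x|\le|y|$ one has $|x-y|/3\le 2|y|/3$ automatically, so $U\subset W\subset B(y,2|y|/3)$ and $\inf_U V\ge c\,g(|y|)$ without further hypothesis.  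The remaining work is a careful integration in $z$ over three subregions of $U$ (near the centre, intermediate, near $\partial U$), each of which yields the bound $\frac{t}{|x-y|^{d+\alpha}}\big(1\wedge(tg(|y|))^{-1}\big)$.  In your second case you in fact switch to a ball of radius $\sim t^{1/\alpha}$ anyway; using it from the start removes the split and the vague appeal to Corollary~\ref{c2-2}.
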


\begin{proof}
Without loss of generality throughout the proof we assume that
$|x|\le |y|$.
Fix any $C_0'>0$, and let $x,y\in \R^d$ and $t>0$ be such that $|x-y|\ge 2 C_0' t^{1/\alpha}$. Define $U=B(y, c_0t^{1/\alpha})$
with $c_0 := C_0'/6$ (in particular, $c_0t^{1/\alpha}= (2C_0't^{1/\alpha})/12 \leq |x-y|/12$).

We first consider the case that $|y|\ge 3$. Set $W=B(y,|x-y|/3)$. Since $|x-y|\ge 12c_0t^{1/\alpha}$, it holds that $U\subset W$. Hence, by \eqref{l2-4-2},
\begin{align*}p(t,x,y)&=\Ee_y\left[\exp\left(-\int_0^{\tau_U} V(X_s)\,ds\right)p(t-\tau_U, X_{\tau_U},x)\I_{\{\tau_U\le t\}}\right]\\
&=\Ee_y\left[\exp\left(-\int_0^{\tau_U} V(X_s)\,ds\right)p(t-\tau_U, X_{\tau_U},x)\I_{\{\tau_U\le t, X_{\tau_U}\in W\setminus U\}}\right]\\
&\quad+ \Ee_y\left[\exp\left(-\int_0^{\tau_U} V(X_s)\,ds\right)p(t-\tau_U, X_{\tau_U},x)\I_{\{\tau_U\le t, X_{\tau_U}\notin W\}}\right]\\
&=:I_1+I_2.\end{align*}
By the inclusions $U\subset W\subset B(y,{2|y|}/{3})$ (the second one follows from ${|x-y|}/{3}\le ({|x|+|y|})/{3}\le {2|y|}/{3}$), \eqref{e1-2} and \eqref{e1-3}, we have
\begin{align} \label{e:pot_to_g}
\inf_{v\in U}V(v)\ge \inf_{v\in W}V(v)\ge \inf_{v\in B(y,{2|y|}/{3})}V(v)\ge c_{1}g(|y|),\quad |y|\ge 3.
\end{align}
Therefore, by the L\'evy system formula \eqref{l2-1-1}, \eqref{e:pot_to_g} and the estimates
\begin{align*}
q_{U}(s,y,z)\le
c_2\left(s^{-d/\alpha}\wedge\frac{s}{|y-z|^{d+\alpha}}\right)\left(\frac{\delta_U(z)^{\alpha/2}}{\sqrt{s}}\wedge1\right),\quad  0<s\le t,\ z\in U,
\end{align*}
\begin{align*}
p(t-s,u,x)\le q(t-s,u,x)\le \frac{c_3t}{|u-x|^{d+\alpha}}\le \frac{c_4t}{|x-y|^{d+\alpha}},\quad 0<s\le t, \  u\in W,
\end{align*}
(cf.\ \eqref{e:heat2} and \eqref{e:heat1}), we get \normal
\begin{align*}
I_1\le & c_5\int_0^t \int_{W\setminus U}\left(\int_U q_U(s,y,z)p(t-s,u,x) \frac{1}{|z-u|^{d+\alpha}} e^{- s\inf_{v\in U}V(v)}\,dz\right)\,du\,ds\\
\le& \frac{c_6t}{|x-y|^{d+\alpha}}\int_0^t\int_{W\setminus U}\int_U\left(s^{-d/\alpha}\wedge \frac{s}{|y-z|^{d+\alpha}}\right)\left(\frac{\delta_{U}(z)^{\alpha/2}}{\sqrt{s}}\wedge1\right)\frac{e^{- c_1sg(|y|)}}{|z-u|^{d+\alpha}} \,dz\,du\,ds\\
\le & \frac{c_6t}{|x-y|^{d+\alpha}}\int_0^t \int_U\left(s^{-d/\alpha}\wedge \frac{s}{|y-z|^{d+\alpha}}\right)\left(\frac{\delta_U(z)^{\alpha/2}}{\sqrt{s}}\wedge1\right) e^{- c_1sg(|y|)} \\
& \times\int_{B(z,\delta_U(z))^c}\frac{1}{|z-u|^{d+\alpha}}\,du\,dz\,ds \\
\le & \frac{c_7t}{|x-y|^{d+\alpha}}\int_0^t\int_U \left(s^{-d/\alpha}\wedge \frac{s}{|y-z|^{d+\alpha}}\right)\delta_U^{-\alpha}(z)\left(\frac{\delta_U(z)^{\alpha/2}}{\sqrt{s}}\wedge1\right) e^{- c_1sg(|y|)}\,dz\,ds.
\end{align*}
Take now $c_8:=c_0/4$. In particular, $\delta_U(z)\ge c_{9}t^{1/\alpha}$ for all $z\in B(y,c_8 s^{1/\alpha})$, and the expression on the right hand side above is less or equal to the sum
\begin{align*}
&\frac{c_{10}t}{|x-y|^{d+\alpha}}\int_0^t\int_{B(y,c_{8} s^{1/\alpha})} s^{-d/\alpha}t^{-1} e^{- c_1sg(|y|)}\,dz\,ds\\
& +\frac{c_{10}t}{|x-y|^{d+\alpha}}\int_0^t\int_{B(y,c_0t^{1/\alpha}-c_{8} s^{1/\alpha})\backslash B(y,c_{8}s^{1/\alpha})} \frac{s}{|y-z|^{d+\alpha}}\delta_U^{-\alpha}(z) e^{-c_1sg(|y|)}\,dz\,ds \\
& +\frac{c_{10}t}{|x-y|^{d+\alpha}}\int_0^t\int_{B(y,c_0t^{1/\alpha}) \backslash B(y,c_0t^{1/\alpha}-c_{8} s^{1/\alpha})} \frac{s}{|y-z|^{d+\alpha}}\frac{\delta_U^{-\alpha/2}(z)}{\sqrt{s}} e^{-c_1sg(|y|)}\,dz\,ds\\
&=:I_{11}+I_{12}+I_{13}.
\end{align*} It is clear that
$$I_{11}\le \frac{c_{11}}{|x-y|^{d+\alpha}} \int_0^t e^{-c_1sg(|y|)}\,ds\le \frac{c_{12}t}{|x-y|^{d+\alpha}} \left(\frac{1}{tg(|y|)}\wedge 1\right).$$
On the other hand, for every $0\le s\le t$,
\begin{align*}
\quad &\int_{B(y,c_0t^{1/\alpha}-c_8 s^{1/\alpha})\backslash B(y,c_8s^{1/\alpha})} \frac{\delta_U^{-\alpha}(z)}{|y-z|^{d+\alpha}}\,dz\\
&\le c_{13}\int_{c_8s^{1/\alpha}}^{c_0t^{1/\alpha}-c_8s^{1/\alpha}}  r^{-1-\alpha} (c_0t^{1/\alpha}-r)^{-\alpha}\,dr\\
&\le c_{14} t^{-1}\int_{c_8 s^{1/\alpha}}^{{c_0t^{1/\alpha}}/{3}} r^{-1-\alpha}\,dr + c_{15} t^{-(1+\alpha)/\alpha} \int_{{c_0t^{1/\alpha}}/{3}}^{c_0t^{1/\alpha}-c_8 s^{1/\alpha}} (c_0t^{1/\alpha}-r)^{-\alpha}\,dr\\
&\le \frac{c_{15}}{ts}+ \frac{c_{16}}{t^{(1+\alpha)/\alpha}}\left[t^{(1-\alpha)/\alpha}\I_{\{\alpha\in(0,1)\}}+\log \Big(1+ \frac{t}{s}\Big) \I_{\{\alpha=1\}}+ s^{(1-\alpha)/\alpha}\I_{\{\alpha\in (1,2)\}}\right]\\
&\le \frac{c_{17}}{t}\left[\frac{1}{s}+\frac{1}{t}\I_{\{\alpha\in (0,1)\}}+\frac{1}{t}\log \Big(1+ \frac{t}{s}\Big) \I_{\{\alpha=1\}}+\left(\frac{s}{t}\right)^{1/\alpha} \frac{1}{s}\I_{\{\alpha\in (1,2)\}}\right]\le \frac{c_{18}}{ts},
\end{align*}
where in the first inequality above we have used the spherical coordinates in $B(y,c_0t^{1/\alpha})$.
This yields $$I_{1 2}\le \frac{c_{19} }{|x-y|^{d+\alpha}}\int_0^t e^{- c_1sg(|y|)}\,ds\le \frac{c_{20}t}{|x-y|^{d+\alpha}}\left(1\wedge\frac{1}{tg(|y|)}\right).$$
Furthermore, we have for every $0\le s\le t$,
\begin{align*}&\int_{B(y,c_0t^{1/\alpha})\setminus B(y,c_0t^{1/\alpha}-c_8s^{1/\alpha})} \frac{\delta_U^{-\alpha/2}(z)}{|y-z|^{d+\alpha}}\,dz\\
&\le c_{21} t^{-1-d/\alpha}\int_{B(y,c_0t^{1/\alpha})\setminus B(y,c_0t^{1/\alpha}-c_8s^{1/\alpha})}\delta_U^{-\alpha/2}(z)\,dz\\
&\le c_{22} t^{-1-d/\alpha}\int_{c_0t^{1/\alpha}-c_8 s^{1/\alpha}}^{c_0t^{1/\alpha}} (c_0t^{1/\alpha}-r)^{-\alpha/2} r^{d-1}\,dr\\
&\le c_{23} t^{-1-1/\alpha} s^{1/\alpha-1/2}\le c_{23} t^{-1} s^{-1/2}.\end{align*}
This gives us that
$$I_{13}\le \frac{c_{24} }{|x-y|^{d+\alpha}} \int_0^t e^{-c_1sg(|y|)}\,ds \le \frac{c_{25}t}{|x-y|^{d+\alpha}}\left(1\wedge\frac{1}{ tg(|y|)}\right).$$

Combining with all estimates above, we arrive at
$$I_1\le \frac{c_{26}t}{|x-y|^{d+\alpha}}\left(1\wedge\frac{1}{ tg(|y|)}\right).$$

According to the L\'evy system formula  \eqref{l2-1-1} and \eqref{e:pot_to_g}, we get
\begin{align*}I_2&\le \int_0^t\int_{W^c}\int_U q_U(s,y,z)p(t-s,u,x) \frac{1}{|z-u|^{d+\alpha}}e^{- s\inf_U V}\,dz\,du\,ds\\
&\le \frac{c_{27}}{|x-y|^{d+\alpha}}\int_0^t \left(\int_U q_U(s,y,z)\,dz\right) \left( \int_{\R^d} p(t-s,u,x)\,du\right) e^{- c_{28}sg(|y|)}\,ds\\&\le \frac{c_{29}t}{|x-y|^{d+\alpha}}\left(1\wedge\frac{1}{ tg(|y|)}\right),\end{align*}
where in the second inequality we used the fact that for $z\in U$ and $u\in W^c$,
$$|z-u|\ge |u-y|-|y-z|\ge |x-y|/3-c_0t^{1/\alpha} \ge |x-y|/3-|x-y|/12 =  |x-y|/4.$$
Therefore, we have proved \eqref{l2-5-1} for the case $|y|\ge 3$.

When $|y|\le 3$, the condition $|x-y|\ge 2C_0't^{1/\alpha}$ implies that $$t\le \left(\frac{|x-y|}{2C_0'}\right)^\alpha \le c_{30}(|x|+|y|)^\alpha
\le 2^\alpha c_{30}|y|^\alpha\le c_{31},$$
and so
\begin{align*}
p(t,x,y)\le q(t,x,y)\le \frac{c_{32}t}{|x-y|^{d+\alpha}}\le \frac{c_{33}t}{|x-y|^{d+\alpha}}\left(1\wedge \frac{1}{tg(|y|)}\right).
\end{align*}
The proof is finished.
 \end{proof}

\section{Two-sided estimates when $0<t\le C_0't_0(|x|\wedge |y|)$ for any $C_0'>0$}\label{section3}
Throughout this section, without loss of generality, we assume that $x,y\in \R^d$ are such that $ |x| \le |y|$.   We begin with the following statement for $x,y\in \R^d$ with $|x-y|\le C_1't^{1/\alpha}$.

\begin{lemma}\label{P:2.1}
For any $C_0',C_1'>0$ there exist positive constants $ C_1,\ldots,C_4$
such that for all $x,y\in \R^d$ and $0<t\le C_0't_0(|x|)$ with
$|x-y|\le C_1't^{1/\alpha}$,
\begin{equation}\label{l3-1-1}
  C_{1} t^{-d/\alpha} e^{-C_{2} tg(|x|)}\le p (t,x,y)\le C_{3} t^{-d/\alpha} e^{-C_{4} tg(|x|)}.
 \end{equation}
\end{lemma}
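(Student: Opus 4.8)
The plan is to prove the two matching bounds separately, in both cases working near the point $x$ (recall $|x|\le|y|$) on a ball of radius comparable to $t^{1/\alpha}$, so that the Dirichlet heat kernel estimate of Lemma~\ref{l2-2} applies and the potential can be controlled by its local value $g(|x|)$.

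\emph{Lower bound.} Fix $x,y$ with $|x-y|\le C_1't^{1/\alpha}$ and $0<t\le C_0't_0(|x|)$. By the Feynman--Kac formula \eqref{e1-4} and positivity of all integrands, I would restrict the expectation defining $p(t,x,y)$ to paths that stay inside a ball $U:=B(x, c\,t^{1/\alpha})$ for the whole time interval $[0,t]$, with $c>0$ chosen large enough (depending on $C_1'$) that $y\in B(x, (c/2)t^{1/\alpha})$, say. This gives $p(t,x,y)\ge \Ee_x\big[e^{-\int_0^t V(X_s)\,ds}\,\I_{\{\tau_U> t\}}\,\mathbf 1(X_t\in \cdot)\big]$-type lower bound; more precisely one bounds $p(t,x,y)\ge q_U(t,x,y)\,\exp(-t\sup_{z\in U}V(z))$ after using that on $\{\tau_U>t\}$ we have $\int_0^t V(X_s)\,ds\le t\sup_U V$ — this requires turning the killed semigroup into the killed density, which follows from \eqref{l2-4-2} / the standard Feynman--Kac representation restricted to $\{\tau_U>t\}$. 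Then $\sup_{z\in U}V(z)\le C_3 g(|x|+ct^{1/\alpha})$; since $t\le C_0't_0(|x|)\le c'(1+(1+|x|)^{q_1})^{-1}\cdot\text{(something)}$... more simply, by Lemma~\ref{dbl-polyn} the bound $t\le C_0' t_0(|x|)$ together with \eqref{l4-2-1b} forces $t^{1/\alpha}\lesssim (1+|x|)$, so $g(|x|+ct^{1/\alpha})\le g(C(1+|x|))\le C g(|x|)$ by the doubling property \eqref{e1-2}. Finally $q_U(t,x,y)\gtrsim (t^{-d/\alpha}\wedge t|x-y|^{-d-\alpha})(\delta_U(y)^{\alpha/2}t^{-1/2}\wedge 1)\gtrsim t^{-d/\alpha}$ by Lemma~\ref{l2-2}, using $|x-y|\le C_1't^{1/\alpha}$ and $\delta_U(y)\ge (c/2)t^{1/\alpha}$. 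This yields the lower bound in \eqref{l3-1-1}.

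\emph{Upper bound.} Start from $p(t,x,y)\le q(t,x,y)\le c\,t^{-d/\alpha}$, which already has the right polynomial factor but no exponential decay; the point is to insert the factor $e^{-Ctg(|x|)}$. I would use the Markov property at time $t/2$ and the semigroup property $p(t,x,y)=\int p(t/2,x,z)p(t/2,z,y)\,dz\le \|p(t/2,x,\cdot)\|_\infty\, T^V_{t/2}1(y)$, or symmetrically $\le c(t/2)^{-d/\alpha}T^V_{t/2}1(x)$; then apply the bound \eqref{e:note2} from the Remark after Corollary~\ref{c2-2}: since $t/2\le (C_0'/2)t_0(|x|)$, we get $T^V_{t/2}1(x)\le c_1\exp(-c_2 (t/2) g(|x|))$, which is exactly of the required form $C_3 t^{-d/\alpha}e^{-C_4 tg(|x|)}$. (One must take a little care that $p(t/2,x,\cdot)$ is bounded by $c(t/2)^{-d/\alpha}$ uniformly, which is immediate from $p\le q$ and \eqref{e:heat1}.)

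\emph{Main obstacle.} The routine heat-kernel manipulations are standard; the one genuinely delicate point is the control of the potential on the ball $U$ of radius $\sim t^{1/\alpha}$, i.e.\ showing $g(|x|+ct^{1/\alpha})\asymp g(|x|)$ under the hypothesis $t\le C_0' t_0(|x|)$. This is where the precise two-sided estimate \eqref{l4-2-1b} for $t_0$ and the doubling property \eqref{e1-2} of $g$ must be combined carefully: from $t\le C_0't_0(|x|)\le C_0'C_2(1+\log(2+|x|)/g(|x|))$ and $g\ge 1$ one deduces $t\le C(1+\log(2+|x|))$, hence (crudely, using the polynomial bound $g(s)\le C_2(1+s)^{q_1}$ the other way is not needed) $t^{1/\alpha}\le C(1+|x|)$, after which doubling finishes it. I would isolate this as the first step of the proof, since it is used in both the lower and (implicitly, via $U$) the arguments above.
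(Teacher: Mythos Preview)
Your proposal is correct and follows essentially the same approach as the paper. For the upper bound you use the semigroup property $p(t,x,y)\le c\,t^{-d/\alpha}T^V_{t/2}1(x)$ together with \eqref{e:note2}, and for the lower bound you restrict to paths remaining in $U=B(x,c\,t^{1/\alpha})$, bound $\sup_U V$ via $t^{1/\alpha}\lesssim 1+|x|$ (from \eqref{l4-2-1b}) and doubling of $g$, and invoke the Dirichlet heat kernel estimate of Lemma~\ref{l2-2}; this is exactly the paper's argument.
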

\begin{proof}
Suppose that $x,y\in \R^d$ and $0<t\le C_0't_0(|x|)$ are such that $|x-y|\le C_1't^{1/\alpha}$. Then,
\begin{equation}\label{e:ess}
\begin{split}
p(t,x,y)&=\int_{\R^d} p(t/2,x,z)p(t/2,z,y)\,dz\le c_1t^{-d/\alpha} \int_{\R^d} p(t/2,x,z)\,dz\\
&= c_1t^{-d/\alpha} T_{t/2}^V1(x)
\le c_2 t^{-d/\alpha} e^{-c_3tg(|x|)},
\end{split}\end{equation}
where in the last inequality we used \eqref{e:note2}.

On the other hand, since $|x-y|\le C_1't^{1/\alpha}$, $y\in B(x, C_1't^{1/\alpha})$. Take
$U:= B(x,3C_1' t^{1/\alpha})$ and $f\in C_b(\R^d)$ with ${\rm supp}[f]\subset B(x,2C_1't^{1/\alpha})$. We have
\begin{align*}
T_t^Vf(x)=&\Ee_x\left[f(X_t)\exp\left(-\int_0^t V(X_s)\,ds\right)\right]\ge  \Ee_x\left[f(X_t)\exp\left(-\int_0^t V(X_s)\,ds\right)\I_{\{\tau_U>t\}}\right]\\
\ge& e^{- t\sup_{z\in U}V(z) }\Ee_x \left[f(X_t)\I_{\{\tau_U>t\}}\right]\ge e^{-c_4tg(|x|) }\int_{B(x,2C_1' t^{1/\alpha})}q_U(t,x,z) f(z)\,dz\\
\ge &c_5 t^{-d/\alpha} e^{- c_4tg(|x|)}\int_{B(x,2C_1' t^{1/\alpha})} f(z)\,dz= c_5t^{-d/\alpha} e^{-c_4tg(|x|)}\|f\|_1.\end{align*}
Here in the third inequality we used
the fact that
\begin{equation}\label{l3-1-2}
\sup_{z\in B(x,3C_0't^{1/\alpha})}V(z)\le \sup_{z\in B(x,c_6(1+|x|))}V(z)\le c_7g(|x|),
\end{equation}
which is due to that $0<t\le t_0(|x|)\le c_8(1+|x|)^\alpha$
(that can be verified by \eqref{l4-2-1b}),
and the fourth inequality follows from
\eqref{e:heat2}.
Since $f$ is arbitrary, the inequality above yields that
$$p (t,x,y)\ge c_5 t^{-d/\alpha} e^{-c_4tg(|x|)}.$$
Therefore,  the proof of \eqref{l3-1-1} is completed.
\end{proof}

\begin{lemma}\label{L:2.2} For any $C_0',C_1'>0$ there exist positive constants $C_{5}$ and $C_{6}$ such that for $x,y\in \R^d$ and $0<t\le C_0't_0(|x|)$ with $|x-y|> C_1't^{1/\alpha}$,
\begin{equation}\label{l3-2-1}
p(t,x,y)\ge C_{5}\left(\frac{1}{tg(|y|)}\wedge1\right) e^{-C_{6}tg(|x|)}\frac{t}{|x-y|^{d+\alpha}}.
\end{equation}
\end{lemma}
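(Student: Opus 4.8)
\emph{Strategy.} The plan is to lower-bound $p(t,x,y)$ by the weight of the ``single large jump'' path: the process stays inside a small ball $U$ around $x$ for almost the whole interval $[0,t]$, makes exactly one jump of size $\asymp|x-y|$ into a ball $D$ around $y$ at a time close to $t$, and then reaches $y$ during the short remaining time. Since $|x|\le|y|$, the potential costs only $e^{-c\,t\,g(|x|)}$ on the long stretch near $x$; the key is that the time spent near $y$ is kept as short as
\[
u:=\min\Big\{\tfrac t4,\ \tfrac1{g(|y|)}\Big\},
\]
so that the a priori near-$y$ cost $e^{-c\,s\,g(|y|)}$ is controlled by a constant, and, after bookkeeping, the length $u$ of this final window produces the factor $\big(\tfrac1{tg(|y|)}\wedge1\big)$.

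\emph{Reduction.} Put $\gamma:=C_1'/3$, $U:=B(x,\gamma t^{1/\alpha})$ and $D:=B(y,\gamma u^{1/\alpha})$. Since $|x-y|>C_1't^{1/\alpha}=3\gamma t^{1/\alpha}$ and $u\le t$, we get $y\notin\bar U$ and $\bar U\cap\bar D=\emptyset$. From $t\le C_0't_0(|x|)$ and \eqref{l4-2-1b} one has $t^{1/\alpha}\le c(1+|x|)$ (as in the proof of Lemma~\ref{P:2.1}), hence $U\subset B(x,c_0(1+|x|))$ and $u^{1/\alpha}\le t^{1/\alpha}\le c(1+|y|)$; so \eqref{e1-2}--\eqref{e1-3} (compare \eqref{l3-1-2}) give $\sup_U V\le c_1 g(|x|)$ and $\sup_{B(y,3\gamma u^{1/\alpha})}V\le c_2 g(|y|)$. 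Applying the identity \eqref{l2-4-2} for this $U$ and restricting the nonnegative integrand to the event $E:=\{\tau_U\in[t-2u,t-u],\ X_{\tau_U}\in D\}\subseteq\{\tau_U\le t\}$, on which $\int_0^{\tau_U}V(X_s)\,ds\le\tau_U\sup_U V\le c_1 tg(|x|)$ and $t-\tau_U\in[u,2u]$, $X_{\tau_U}\in D$, yields
\[
p(t,x,y)\ \ge\ e^{-c_1 tg(|x|)}\Big(\inf_{\,u\le s\le 2u,\ w\in D}p(s,w,y)\Big)\,\Pp_x(E).
\]

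\emph{The two factors.} For the infimum I would restrict the Feynman--Kac expectation to paths staying in $B:=B(y,3\gamma u^{1/\alpha})$, so that $p(s,w,y)\ge q_{B}(s,w,y)\,e^{-s\sup_B V}\ge q_B(s,w,y)\,e^{-2c_2 u g(|y|)}$; then Lemma~\ref{l2-2} applies (the ratio $s/(3\gamma u^{1/\alpha})^\alpha\le 2(3\gamma)^{-\alpha}$ is a fixed constant), and with $|w-y|\le\gamma u^{1/\alpha}$, $\delta_B(w)\ge 2\gamma u^{1/\alpha}$ and $u\asymp s$ it gives $q_B(s,w,y)\ge c_3 u^{-d/\alpha}$, whence $\inf_{\,u\le s\le 2u,\,w\in D}p(s,w,y)\ge c_4 u^{-d/\alpha}e^{-c_5 u g(|y|)}$. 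For $\Pp_x(E)$ I would use the L\'evy system formula \eqref{l2-1-1} with $\tau=\tau_U$ and $f(s,v,z):=\I_{[t-2u,t-u]}(s)\I_U(v)\I_D(z)$ (note $f(s,v,v)=0$ as $U\cap D=\emptyset$): since $U\cap D=\emptyset$, the only jump out of $U$ landing in $D$ is the exit jump at $\tau_U$, so the left-hand side equals $\Pp_x(E)$, while on the right-hand side $|v-z|\le 2|x-y|$ for $v\in\bar U$, $z\in D$; therefore
\[
\Pp_x(E)\ \ge\ \frac{c_6\,|D|}{|x-y|^{d+\alpha}}\int_{t-2u}^{t-u}\Pp_x(\tau_U>s)\,ds\ \ge\ \frac{c_6\,|D|}{|x-y|^{d+\alpha}}\,u\,\Pp_x(\tau_U>t),
\]
using that $s\mapsto\Pp_x(\tau_U>s)$ is non-increasing. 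By translation and scaling $\Pp_x(\tau_U>t)=\Pp_0\big(\tau_{B(0,1)}>\gamma^{-\alpha}\big)=:c_7>0$, and $|D|=\omega_d(\gamma u^{1/\alpha})^d$, so $\Pp_x(E)\ge c_8\,u^{1+d/\alpha}|x-y|^{-d-\alpha}$. Combining the three bounds, $p(t,x,y)\ge c_9\,e^{-c_1 tg(|x|)}e^{-c_5 u g(|y|)}\,u\,|x-y|^{-d-\alpha}$.

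\emph{Conclusion.} Finally insert $u=\min\{t/4,1/g(|y|)\}$. If $tg(|y|)\le4$, then $u=t/4$ and $ug(|y|)\le1$, so the last bound is $\ge c_{10}e^{-c_1 tg(|x|)}\,t\,|x-y|^{-d-\alpha}\ge c_{10}\big(\tfrac1{tg(|y|)}\wedge1\big)e^{-c_1 tg(|x|)}\,t\,|x-y|^{-d-\alpha}$. If $tg(|y|)>4$, then $u=1/g(|y|)$, $ug(|y|)=1$, and the bound is $\ge c_{11}e^{-c_1 tg(|x|)}\big(g(|y|)|x-y|^{d+\alpha}\big)^{-1}=c_{11}\tfrac1{tg(|y|)}e^{-c_1 tg(|x|)}\,t\,|x-y|^{-d-\alpha}$, where $\tfrac1{tg(|y|)}\wedge1=\tfrac1{tg(|y|)}$. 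Either way \eqref{l3-2-1} follows (the range $|x|\vee|y|<1$ being absorbed by the local boundedness of $V$ together with $g\ge1$). I expect the delicate point to be this joint calibration: the exit window $[t-2u,t-u]$ must be short (length $\asymp u$) so that only time $O(1/g(|y|))$ is spent near $y$, yet the probability $\Pp_x(\tau_U>t)$ of still sitting in $U$ at time $t$ must stay bounded below by a positive constant --- which forces $U$ to have radius $\asymp t^{1/\alpha}$ (rather than $\asymp|x-y|$), and it is precisely the hypothesis $t\le C_0't_0(|x|)$, through $t^{1/\alpha}\lesssim 1+|x|$, that keeps such a $U$ compatible with $\sup_U V\lesssim g(|x|)$.
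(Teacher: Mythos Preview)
Your proof is correct, but it is the time-reversal of the paper's argument. The paper uses the symmetry $p(t,x,y)=p(t,y,x)$ and applies \eqref{l2-4-2} from $y$ with $U=B(y,c_0t^{1/\alpha})$: the process is forced to jump \emph{early} (at some time $s\in[0,t/2]$) into $W=B(x,c_0t^{1/\alpha})$ and then spends the long remaining interval near $x$, where the lower bound for $p(t-s,\cdot,x)$ is supplied by the already-proved near-diagonal Lemma~\ref{P:2.1}. The factor $\big(\tfrac1{tg(|y|)}\wedge1\big)$ then drops out automatically from $\int_0^{t/2}e^{-c\,s\,g(|y|)}\,ds$, with no auxiliary parameter needed. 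You instead start from $x$, keep the process in $B(x,\gamma t^{1/\alpha})$ for nearly the whole time, jump \emph{late} into a small ball around $y$, and control the short remaining time via Lemma~\ref{l2-2}; the desired factor is then produced by your explicit calibration $u=\min\{t/4,1/g(|y|)\}$. Both routes are sound; the paper's is a little more economical (it recycles Lemma~\ref{P:2.1} and avoids introducing $u$), while yours is more self-contained and, since it never invokes the symmetry of $p$, would transfer more directly to non-symmetric settings.
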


\begin{proof}
Recall that $|x| \le |y|$. Let $U=B(y,c_0t^{1/\alpha})$ and $W=B(x, c_0 t^{1/\alpha})$ with $c_0:=C_1'/4$. Since $C_1't^{1/\alpha}  < |x-y| \leq 2|y|$, we have $c_0 t^{1/\alpha} < |y|/2$. Consequently,
\begin{align} \label{e:pot_to_g_2}
\sup_{z\in U}V(z)\le \sup_{z\in B(y,|y|/2)}V(z)\le c_{1}g(|y|).
\end{align}
According to \eqref{l2-4-2}, the L\'evy system formula \eqref{l2-1-1}, \eqref{l3-1-1}, \eqref{e:pot_to_g_2} and the fact
that
\begin{align*}
|z-u|\le |x-y|+|z-x|+|u-y|\le |x-y|+2c_0t^{1/\alpha}\le c_2|x-y|,\quad z\in W,\ u\in U,
\end{align*}
we have
\begin{align*}p(t,y,x)=&\Ee_y \left[p(t-\tau_U, X_{\tau_U},x)\I_{\{\tau_U\le t\}} \exp\left(-\int_0^{\tau_U}V(X_s)\,ds\right)\right]\\
\ge&\Ee_y\left[p(t-\tau_U,X_{\tau_U},x) e^{-\int_0^{\tau_U} V(X_s)\,ds} \I_{\{\tau_U\le t/2,\, X_{\tau_U}\in W\}}\right]\\
\ge &c_3\left(\inf_{t/2\le s\le t \atop z\in W} p(s,z,x)\right)\int_0^{t/2}\int_U \int_{W}q_U(s,y,z) \frac{1}{|z-u|^{d+\alpha}} e^{-c_1sg(|y|)}\,dz\,du\,ds\\
\ge & c_4e^{-c_5tg(|x|)} t^{-d/\alpha} \frac{|W|}{|x-y|^{d+\alpha}}\int_0^{t/2}\int_U q_U(s,y,z)\,dze^{-c_1sg(|y|)}\,ds\\
\ge & c_6e^{-c_5tg(|x|)}\frac{1}{|x-y|^{d+\alpha}}\int_0^{t/2} \Pp_y(\tau_{B(y,c_0t^{1/\alpha})}>t/2)e^{-c_1sg(|y|)}\,ds\\
\ge & c_7 e^{-c_5t g(|x|)}\frac{1}{|x-y|^{d+\alpha}}\left(\frac{1}{g(|y|)}\wedge t\right) \\
  = &  c_7\left(\frac{1}{tg(|y|)}\wedge1\right) e^{-c_5tg(|x|)}\frac{t}{|x-y|^{d+\alpha}}.
\end{align*}
 Here we also used the property
 \begin{align*}
\Pp_y(\tau_{B(y,c_0t^{1/\alpha})}>t) = \Pp_0(\tau_{B(0,c_0t^{1/\alpha})}>t) \ge c_8>0,
 \end{align*}
 which can be verified directly by \eqref{e:heat2}. Hence, the conclusion \eqref{l3-2-1} follows.
\end{proof}
\begin{lemma}\label{L:2.3}
For any $C_0', C_1'>0$ there exist positive constants $C_{7}, C_{8}$ such that for $ x,y\in \R^d$ and $0<t\le C_0't_0(|x|)$ with $|x-y|> C_1't^{1/\alpha}$,
\begin{equation}\label{l3-3-1}
p (t,x,y)\le C_{7}\left(\frac{1}{tg(|y|)}\wedge1\right) e^{-C_{8}tg(|x|)}\frac{t}{|x-y|^{d+\alpha}}.
\end{equation}
\end{lemma}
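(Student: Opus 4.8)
The plan is to mirror the structure of the proof of Lemma \ref{L:2.1}, but now tracking the potential near the starting point $x$ as well, since $t$ is only small relative to $t_0(|x|)$ rather than relative to $|x-y|^\alpha$. As before, assume $|x|\le|y|$. First I would reduce to the genuinely large-distance regime: the difference between $|x-y|>C_1' t^{1/\alpha}$ and $|x-y|\ge 2C_0'' t^{1/\alpha}$ for a possibly larger constant $C_0''$ can be absorbed by using Lemma \ref{P:2.1} together with the Chapman--Kolmogorov identity $p(t,x,y)=\int p(t/2,x,z)p(t/2,z,y)\,dz$, bounding $p(t/2,z,y)$ on the ball $B(x,c t^{1/\alpha})$ by $c t^{-d/\alpha}$ and using $T^V_{t/2}\I(x)\le c_1 e^{-c_2 tg(|x|)}$ from \eqref{e:note2}; on the complementary annulus we can already invoke Lemma \ref{L:2.1}. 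So it suffices to treat $|x-y|\ge 2C_0'' t^{1/\alpha}$.

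In that regime I would run the exit-decomposition \eqref{l2-4-2} with $U=B(x,c_0 t^{1/\alpha})$ starting from $x$ (so that the factor $e^{-\int_0^{\tau_U}V(X_s)\,ds}$ contributes the decay $e^{-c t g(|x|)}$ whenever $\tau_U$ is of order $t$, using that $V\asymp g(|x|)$ on $U$ when $|x|$ is large, exactly as in \eqref{e:pot_to_g}; the case $|x|$ bounded is handled separately and trivially via $p\le q$ as at the end of Lemma \ref{L:2.1}). Split according to whether $X_{\tau_U}$ lands in the intermediate annulus $W\setminus U$ with $W=B(x,|x-y|/3)$ or outside $W$. On $W\setminus U$, apply the L\'evy system formula \eqref{l2-1-1}, the Dirichlet heat kernel bound \eqref{e:heat2} for $q_U(s,x,z)$, and the free bound $p(t-s,u,y)\le q(t-s,u,y)\le c t |x-y|^{-d-\alpha}$; this produces $\frac{ct}{|x-y|^{d+\alpha}}\int_0^t e^{-c s g(|x|)}\,ds \le \frac{ct}{|x-y|^{d+\alpha}}\big(\frac{1}{t g(|x|)}\wedge 1\big)$ after the same annular integration over $z$ and $u$ already carried out in Lemma \ref{L:2.1}. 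On $\{X_{\tau_U}\notin W\}$, use $\int_{\R^d}p(t-s,u,y)\,du = T^V_{t-s}\I(y)\le 1$ together with $|z-u|\ge|x-y|/4$ and $\int_U q_U(s,x,z)\,dz\le 1$ to get the same type of bound. Since $|x|\le|y|$ gives $g(|x|)\le C_1 g(|y|)$ by monotonicity of $g$, the factor $\big(\frac{1}{tg(|x|)}\wedge 1\big)$ dominates $\big(\frac{1}{tg(|y|)}\wedge 1\big)$; but what we actually want on the right of \eqref{l3-3-1} is the product form with $g(|y|)$ in the prefactor and $g(|x|)$ only in the exponential, so I would instead keep the exponential decay $e^{-cstg(|x|)}$ outside and bound the $s$-integral producing $\big(\frac1{tg(|y|)}\wedge 1\big)$ by replacing $\inf_U V$ estimates with the coarser observation that along paths the dominant contribution to $\big(\frac1{tg(|y|)}\wedge1\big)$ comes, as in Lemma \ref{L:2.1}, from integrating $e^{-c s g(|y|)}$ --- here one must be slightly careful and combine the decay near $y$ obtained when the process actually spends time near $y$ before exiting, which forces using $W$ centred at $y$ in part of the argument, exactly the asymmetry already present in Lemma \ref{L:2.1}.

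The main obstacle is bookkeeping the two different exponential rates $g(|x|)$ and $g(|y|)$ simultaneously: the single-ball exit decomposition from $x$ naturally yields $e^{-ctg(|x|)}$ and the prefactor $\big(\frac1{tg(|x|)}\wedge1\big)$, whereas \eqref{l3-3-1} demands the (weaker) prefactor $\big(\frac1{tg(|y|)}\wedge1\big)$ and the (stronger) exponential $e^{-c t g(|x|)}$. The resolution is to prove the bound with $e^{-c t g(|x|)}$ and the $g(|x|)$-prefactor first (which is automatic from the $x$-centred decomposition), and then, since $g(|x|)\le C_1 g(|y|)$ and $t g(|y|)\ge tg(|x|)$, note $\big(\frac1{tg(|x|)}\wedge1\big)e^{-c tg(|x|)} \le C\big(\frac1{tg(|y|)}\wedge1\big)e^{-c' tg(|x|)}$ after possibly shrinking $c$: indeed if $tg(|y|)\le 1$ both prefactors are $1$ and there is nothing to do, while if $tg(|y|)>1\ge tg(|x|)$ we lose only a bounded factor, and if $tg(|x|)>1$ then $e^{-ctg(|x|)}$ can be split as $e^{-(c/2)tg(|x|)}e^{-(c/2)tg(|x|)}$ with the first factor absorbing the ratio $\frac{g(|y|)}{g(|x|)}\le$ polynomial in $tg(|y|)$ via \eqref{l4-2-1}. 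This elementary juggling, together with the annular integral estimates already performed verbatim in Lemma \ref{L:2.1}, completes the proof.
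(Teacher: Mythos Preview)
Your plan has a genuine gap in two places, and the paper takes a different (and simpler) route.

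\medskip

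\textbf{(i) The exit decomposition from $x$ does not produce the exponential.} Running \eqref{l2-4-2} with $U=B(x,c_0t^{1/\alpha})$ and bounding $p(t-s,u,y)$ by the free kernel $\le c\,t\,|x-y|^{-d-\alpha}$ gives, after the L\'evy--system computation,
\[
\frac{c\,t}{|x-y|^{d+\alpha}}\int_0^t e^{-cs g(|x|)}\,ds \;\asymp\; \frac{c\,t}{|x-y|^{d+\alpha}}\Bigl(\frac{1}{t g(|x|)}\wedge 1\Bigr),
\]
exactly as you write. There is no additional factor $e^{-ctg(|x|)}$ here: the exit time $\tau_U$ is typically much smaller than $t$, so $e^{-\int_0^{\tau_U}V}\asymp e^{-c\tau_U g(|x|)}$ contributes only through the $s$--integral, not as a uniform $e^{-ctg(|x|)}$. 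Your later sentence ``prove the bound with $e^{-ctg(|x|)}$ and the $g(|x|)$-prefactor first (which is automatic from the $x$-centred decomposition)'' is therefore not justified; only one of these two factors comes out.

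\medskip

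\textbf{(ii) The conversion argument fails.} Even granting (incorrectly) both factors, the claimed inequality
\[
\Bigl(\tfrac{1}{tg(|x|)}\wedge 1\Bigr)e^{-ctg(|x|)} \;\le\; C\Bigl(\tfrac{1}{tg(|y|)}\wedge 1\Bigr)e^{-c'tg(|x|)}
\]
breaks down precisely in the regime $tg(|x|)\le 1 < tg(|y|)$. There the left side is $\ge e^{-c}$, while the right side is $\le C/(tg(|y|))$, which can be made arbitrarily small by sending $|y|\to\infty$ with $t$ and $|x|$ fixed (nothing in the hypotheses prevents this). Your appeal to \eqref{l4-2-1} does not help: that bound controls $g(|y|)$ by a power of $1+|y|$, not by anything involving $tg(|x|)$.

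\medskip

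\textbf{What the paper does instead.} The paper never reruns the exit argument. It uses the Chapman--Kolmogorov splitting at time $t/2$,
\[
p(t,x,y)=\int p(t/2,x,z)\,p(t/2,z,y)\,dz,
\]
and decomposes the $z$--integral according to whether $z$ is close to $y$ (two subcases depending on whether $|x-y|\le |y|/4$ or not). On each piece, one of the factors $p(t/2,\cdot,\cdot)$ is bounded directly by Lemma~\ref{L:2.1}, which already carries the prefactor $\bigl(\tfrac{1}{tg(|y|)}\wedge 1\bigr)\tfrac{t}{|x-y|^{d+\alpha}}$ since $\max\{g(|x|),g(|z|)\}$ is comparable to $g(|y|)$ on the relevant region. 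The remaining factor is integrated out using $\int p(t/2,\cdot,z)\,dz = T^V_{t/2}\I(\cdot)$ together with \eqref{est1}, \eqref{e:note2} and \eqref{r2-1-1}, and it is this step---not an exit decomposition---that produces the full exponential $e^{-c t g(|x|)}$ (because the integral runs over the fixed time $t/2$, not over a random $\tau_U$). This cleanly separates the two mechanisms: Lemma~\ref{L:2.1} supplies the $g(|y|)$-prefactor, the semigroup bound supplies the $g(|x|)$-exponential.
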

\begin{proof}  Recall we assume that $|x| \le |y|$. The proof is split into two cases.

\smallskip

\noindent
{\bf Case 1:\ $|y-x|\le |y|/4$.} In particular, $|x|\ge |y|-|y-x|\ge |y|/2$.
We write
\begin{align*}
p(t,x, y)&=\int_{\{|z-y|\le |x-y|/2\}} p(t/2,x,z)p(t/2,z,y)\,dz +\int_{\{|z-y|\ge |x-y|/2\}} p(t/2,x,z)p(t/2,z,y)\,dz\\
&=:I_1+I_2.
\end{align*}

When $|y-z|\le |x-y|/2$, we have $|z-x|\ge |x-y|-|y-z|\ge |x-y|/2\ge {C_1't^{1/\alpha}}/{2}$ and $|z|\ge |y|-|y-z|\ge |y|-|x-y|/2\ge |y|-|y|/8\ge 7|y|/8$.
Then, according to \eqref{l2-5-1} and   \eqref{e1-2},
\begin{align*}
p(t/2,x,z)&\le c_1\left(\frac{1}{t\max\{g(|x|),g(|z|)\}}\wedge1\right) \frac{t}{|x-z|^{d+\alpha}}\le c_2\left(\frac{1}{tg(|y|)}\wedge1\right) \frac{t}{|x-y|^{d+\alpha}}.
\end{align*}
So for any $0<t\le C_0't_0(|x|)$ it holds that
\begin{align*}I_1\le &c_2\left(\frac{1}{tg(|y|)}\wedge1\right) \frac{t}{|x-y|^{d+\alpha}}\int_{\R^d} p(t/2,z,y)\,dz\\
 \le &c_3 \left(\frac{1}{tg(|y|)}\wedge1\right) \frac{t}{|x-y|^{d+\alpha}}\left(e^{-c_4t g(|y|)}+\frac{t}{(1+|y|)^{\alpha}} \right)\\
\le& c_5 \left(\frac{1}{tg(|y|)}\wedge1\right) \frac{t}{|x-y|^{d+\alpha}}\left(e^{-c_6t g(|x|)}+\frac{t}{(1+|x|)^{\alpha}} \right)\\
\le & c_7 \left(\frac{1}{tg(|y|)}\wedge1\right) \frac{t}{|x-y|^{d+\alpha}} e^{-c_8tg(|x|)},
\end{align*}
where the second inequality follows from \eqref{est1}, in the third inequality we have used the fact $|x|\le |y|$, and the last inequality is due to
  the fact $0<t\le C_0't_0(|x|)$
  and \eqref{r2-1-1}.

Similarly, by \eqref{l2-5-1}, for every $z,y\in \R^d$ with $|z-y|\ge |x-y|/2\ge  {C_1't^{1/\alpha}}/{2}$,
\begin{align*}
p(t/2, z,y)&\le c_9\left(\frac{1}{t\max\{g(|y|),g(|z|)\}}\wedge1\right)\frac{t}{|z-y|^{d+\alpha}}\le c_{10}\left(\frac{1}{tg(|y|)}\wedge1\right) \frac{t}{|x-y|^{d+\alpha}}.
\end{align*}
Hence, due to
\eqref{e:note2}, it holds that for every $0<t\le C_0t_0(|x|)$,
\begin{align*}
I_2\le & c_{10}\left(\frac{1}{tg(|y|)}\wedge1\right) \frac{t}{|x-y|^{d+\alpha}}\int_{\R^d} p(t/2,x,z)\,dz \le c_{11}\left(\frac{1}{tg(|y|)}\wedge1\right) \frac{t}{|x-y|^{d+\alpha}} e^{-c_{12}tg(|x|)}.
\end{align*}
Therefore, putting all the estimates above for $I_1$ and $I_2$ together yields
$$p (t,x,y)\le   c_{13}\left(\frac{1}{tg(|y|)}\wedge1\right) \frac{t}{|x-y|^{d+\alpha}} e^{-c_{14} tg(|x|)}.$$

\smallskip

\noindent
{\bf Case 2:\ $|y-x|> |y|/4$.}
 Since we assume that $|x|\le |y|$, it holds that $|y|/4\le |y-x|\le |y|+|x|\le 2|y|$. Then, we set
\begin{align*}
p(t,x, y)&=\int_{\{|z-y|\le |y|/16\}} p(t/2,x,z)p(t/2,z,y)\,dz +\int_{\{|z-y|\ge |y|/16\}} p(t/2,x,z)p(t/2,z,y)\,dz\\
&=:J_1+J_2.
\end{align*}
When $|z-y|\le |y|/16$, we have $|z-x|\ge |x-y|-|y-z|\ge |x-y|-|y|/16\ge |x-y|-|x-y|/4\ge {3|x-y|}/{4}\ge {3C'_1t^{1/\alpha}}/{4}$ and
$|z|\ge |y|-|z-y|\ge |y|/2$. Then, according to \eqref{l2-5-1} and \eqref{e1-2},
$$p(t/2,z,x)\le  c_{15}\left(\frac{1}{t\max\{g(|x|),g(|z|)\}}\wedge 1\right)\frac{t}{|x-z|^{d+\alpha}}\le c_{16}\left( \frac{1}{tg(|y|)}\wedge1\right)\frac{t}{|x-y|^{d+\alpha}}.$$
Thus, following the argument for $I_1$ and using \eqref{e:note2},
we find that
\begin{align*}
J_1\le &c_{16}\left( \frac{1}{tg(|y|)}\wedge1\right)\frac{t}{|x-y|^{d+\alpha}}\int_{\R^d} p(t/2,z,y)\,dz\le c_{17}\left( \frac{1}{tg(|y|)}\wedge1\right)\frac{t}{|x-y|^{d+\alpha}} e^{-c_{18} tg(|x|)}.
\end{align*}

Note that, when $|z-y|\ge |y|/16$,
$|z-y|\ge |x-y|/32\ge {C'_1t^{1/\alpha}}/{32}.$ Hence, according to \eqref{l2-5-1}, in this case we have
$$p(t/2,z,y)\le c_{19}\left(\frac{1}{t\max\{g(|z|),g(|y|)\}}\wedge1\right)\frac{t}{|z-y|^{d+\alpha}}\le c_{20}\left(\frac{1}{tg(|y|)}\wedge1\right)\frac{t}{|x-y|^{d+\alpha}}.$$
Then, applying \eqref{e:note2} again, we deduce that
\begin{align*}J_2\le &c_{20}\left(\frac{1}{tg(|y|)}\wedge1\right)\frac{t}{|x-y|^{d+\alpha}}\int_{\R^d} p (t/2,x,z)\,dz\le  c_{21}\left(\frac{1}{tg(|y|)}\wedge1\right)\frac{t}{|x-y|^{d+\alpha}}e^{-c_{22} tg(|x|)}.\end{align*}

Putting the estimates for $J_1$ and $J_2$ together, we arrive at
$$p(t,x,y)\le c_{23} \left(\frac{1}{tg(|y|)}\wedge1\right)\frac{t}{|x-y|^{d+\alpha}} e^{-c_{24} tg(|x|)}.$$
Therefore, the assertion follows.
\end{proof}

Now we can summarise the results in this section as follows.

\begin{proposition}\label{P:2.4}
For any $C_0'>0$ there exist positive constants  $C_9,\ldots,C_{12}$  such that
for all $x,y\in \R^d$ with $0<t\le C_0't_0(|x|)$,
\begin{equation}\label{p3-1-1}
\begin{split}
C_{9}\left(\frac{1}{tg(|y|)}\wedge1\right) e^{-C_{10}tg(|x|)} & \left(t^{-d/\alpha}\wedge \frac{t}{|x-y|^{d+\alpha}}\right) \le p(t,x,y) \\
& \le C_{11}\left(\frac{1}{tg(|y|)}\wedge1\right) e^{-C_{12}tg(|x|)}\left(t^{-d/\alpha}\wedge \frac{t}{|x-y|^{d+\alpha}}\right).
\end{split}
\end{equation}
\end{proposition}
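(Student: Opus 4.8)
The plan is simply to glue together the four lemmas of this section, so I fix $C_0'>0$ and, in each of Lemmas \ref{P:2.1}, \ref{L:2.2} and \ref{L:2.3}, choose the auxiliary parameter $C_1'$ to be $1$. I would then distinguish the near-diagonal regime $|x-y|\le t^{1/\alpha}$ from the off-diagonal regime $|x-y|> t^{1/\alpha}$, noting the one-line facts that in the first case $t^{-d/\alpha}\wedge \frac{t}{|x-y|^{d+\alpha}}=t^{-d/\alpha}$, while in the second case $t^{-d/\alpha}\wedge \frac{t}{|x-y|^{d+\alpha}}=\frac{t}{|x-y|^{d+\alpha}}$. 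In the off-diagonal case there is then nothing left to do: Lemma \ref{L:2.2} is precisely the lower bound in \eqref{p3-1-1} and Lemma \ref{L:2.3} precisely the upper bound, once the factor $\frac{t}{|x-y|^{d+\alpha}}$ is rewritten as $t^{-d/\alpha}\wedge\frac{t}{|x-y|^{d+\alpha}}$.

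The near-diagonal case requires a small additional remark, because the factor $\frac{1}{tg(|y|)}\wedge 1$ does not appear in Lemma \ref{P:2.1}. For the lower bound this causes no trouble: since $\frac{1}{tg(|y|)}\wedge 1\le 1$ and $g(|x|)\le g(|y|)$ (recall $|x|\le|y|$ and $g$ is non-decreasing), the estimate $p(t,x,y)\ge C_1 t^{-d/\alpha}e^{-C_2 tg(|x|)}$ of Lemma \ref{P:2.1} already dominates the required right-hand side. For the upper bound, I would first observe that in this regime $g(|x|)\asymp g(|y|)$: indeed $|y|\le |x|+t^{1/\alpha}\le |x|+(C_0' t_0(|x|))^{1/\alpha}$, and by \eqref{l4-2-1b} together with $g\ge 1$ one has $t_0(|x|)\le C_2(1+\log(2+|x|))$, hence $|y|\le c(1+|x|)$, and the doubling property \eqref{e1-2} gives $g(|y|)\le C g(|x|)$ (the reverse inequality being trivial). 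Then, starting from $p(t,x,y)\le C_3 t^{-d/\alpha}e^{-C_4 tg(|x|)}$ of Lemma \ref{P:2.1} and applying the elementary inequality $e^{-a u}\le c_a\big(u^{-1}\wedge 1\big)e^{-(a/2)u}$, valid for all $u>0$, with $u=tg(|x|)$, together with $\big(\tfrac{1}{tg(|x|)}\wedge 1\big)\le C\big(\tfrac{1}{tg(|y|)}\wedge 1\big)$ (which follows from $g(|x|)\asymp g(|y|)$), I obtain the upper bound in \eqref{p3-1-1} with $C_{12}=C_4/2$.

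Combining the two regimes yields \eqref{p3-1-1}. The only step that is not purely mechanical is this near-diagonal upper bound, where the factor $\frac{1}{tg(|y|)}\wedge 1$ has to be manufactured out of the exponential decay $e^{-C_4 tg(|x|)}$; this trade is legitimate precisely because, under Assumption \textbf{(H)}, $g(|x|)$ and $g(|y|)$ are comparable in the near-diagonal regime. I expect that comparability — which in turn rests on the upper bound for $t_0$ in Lemma \ref{dbl-polyn} — to be the one genuinely needed ingredient beyond the three preceding lemmas.
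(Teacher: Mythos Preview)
Your proposal is correct and follows essentially the same approach as the paper: both split into near- and off-diagonal regimes, invoke Lemmas \ref{L:2.2}--\ref{L:2.3} directly in the off-diagonal case, and in the near-diagonal case manufacture the missing factor $\tfrac{1}{tg(|y|)}\wedge 1$ from the exponential via the comparability $g(|x|)\asymp g(|y|)$, which in turn rests on the upper bound for $t_0$ in Lemma \ref{dbl-polyn}. The paper phrases the last step as \eqref{p3-1-2}, $\tfrac{1}{tg(|y|)}\wedge 1\ge c_6 e^{-c_7 tg(|x|)}$, which is exactly your elementary inequality read the other way.
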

\begin{proof}
Let $C_1'>0$. When $x,y\in \R^d$ and $0<t\le C_0't_0(|x|)$ are such that $|x-y|\ge C_1't^{1/\alpha}$, the assertion follows directly
from Lemmas \ref{L:2.2} and \ref{L:2.3}. On the other hand, by   Lemma \ref{P:2.1},  for all $x,y\in \R^d$ with $|x-y|\le C_1't^{1/\alpha}$ and $0<t\le C_0't_0(|x|)$,
$$p(t,x,y)\ge  c_1e^{- c_2tg(|x|)}t^{-d/\alpha}\ge c_1 \left(\frac{1}{tg(|y|)} \wedge1\right) e^{- c_2tg(|x|)}t^{-d/\alpha}.$$
Furthermore, for all $x,y\in \R^d$ with $|x-y|\le C_1't^{1/\alpha}$,
\begin{equation}
\begin{split}
g(|x|)&\le g(|y|) \le g(|x|+|x-y|) \le g(|x|+C_1't^{1/\alpha})\\
&\le g(|x|+C_1'C_0'^{1/\alpha}t_0(|x|)^{1/\alpha})\le
g(c_3(1+|x|))\le c_4g(|x|),
\end{split}
\end{equation}
and so
\begin{equation}\label{p3-1-2}
\frac{1}{tg(|y|)} \wedge 1\ge c_5\left(\frac{1}{tg(|x|)} \wedge 1\right)\ge c_6e^{-c_7tg(|x|)},
\end{equation}
where we used the monotonicity  of $g(\cdot)$ and the fact that $t_0(|x|)^{1/\alpha}\le c_8(1+|x|)$.
Thus, using \eqref{p3-1-2} and  Lemma \ref{P:2.1} again, we obtain that for  all $x,y\in \R^d$ with $|x-y|\le C_1't^{1/\alpha}$ and $0<t\le C_0't_0(|x|)$,
$$p(t,x,y)\le  c_{9}t^{-d/\alpha}e^{- c_{10}tg(|x|)}\le c_{11} t^{-d/\alpha}\left(\frac{1}{tg(|y|)} \wedge1\right) e^{- c_{12}tg(|x|)}.$$
Putting all estimates  above together yields the desired assertion \eqref{p3-1-1}.
\end{proof}

\section{Two-sided estimates when $t\ge C_0t_0(|x|\wedge |y|)$
for some $C_0>0$}\label{section4}

In this section, we will present two-sided estimates of $p(t,x,y)$ when $t>C_0t_0(|x|\wedge |y|)$ for some properly chosen
constant $C_0>0$. The statements are split into two parts, according to two
different almost monotonicity properties of $t_0(\cdot)$. Without loss of generality, we still assume that $|x| \le |y|$.
Recall that
$$H(x):=\frac{1}{g(|x|)(1+|x|)^{d+\alpha}},\quad x\in \R^d.$$

We first give a lemma for a rough upper bound of the heat kernel $p(t,x,y)$ when $t>C_0t_0(|x|)$ for some $C_0>0$.
\begin{lemma}\label{l4-5}
There exist $N_1, C_1>0$ such that for all $z,u\in \R^d$ and $t>0$ with $t>N_1C_5t_0(|z|)$,
\begin{equation}\label{l4-5-1}
p(t,z,u)\le C_1H(z),
\end{equation} where $C_5>0$ is the constant given in Corollary $\ref{c2-2}$.
\end{lemma}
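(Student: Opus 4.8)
The plan is to split according to the size of $|z|$ and, when $|z|$ is large, according to the size of $t$. If $|z|$ stays in a fixed compact set $[0,R_0]$, then $H(z)$ is bounded below by a positive constant; by \eqref{l4-2-1b} also $t_0(|z|)\ge C_2^{-1}(1+R_0)^{-q_1}=:c_*>0$, so $t>N_1C_5t_0(|z|)$ forces $t\ge N_1C_5c_*$, whence $p(t,z,u)\le q(t,z,u)\le c\,t^{-d/\alpha}$ is bounded in terms of $R_0$ and $N_1$ only (see \eqref{e:heat1}), and \eqref{l4-5-1} holds once $C_1$ is large. From now on assume $|z|\ge R_0$.

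\emph{Large $t$.} By Chapman--Kolmogorov and $p(t/2,w,u)\le q(t/2,w,u)\le c(t/2)^{-d/\alpha}$ (see \eqref{e:heat1}),
\[
p(t,z,u)=\int_{\R^d}p(t/2,z,w)p(t/2,w,u)\,dw\le c\,t^{-d/\alpha}\,T_{t/2}^V1(z).
\]
Taking $N_1\ge 2$ makes $t/2\ge C_5t_0(|z|)$, so Corollary \ref{c2-2} gives $T_{t/2}^V1(z)\le C_6\log(2+|z|)(1+|z|)^{-\alpha}$ and hence $p(t,z,u)\le c\,t^{-d/\alpha}\log(2+|z|)(1+|z|)^{-\alpha}\le c\,H(z)$ whenever $t$ exceeds the threshold $T^\star(|z|):=\big(g(|z|)\log(2+|z|)(1+|z|)^{d}\big)^{\alpha/d}$, which by Lemma \ref{dbl-polyn} grows at most polynomially in $|z|$.

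\emph{Intermediate $t$ --- the crux.} Assume $N_1C_5t_0(|z|)<t\le T^\star(|z|)$ and $|z|\ge R_0$. Fix $c_0\in(0,1)$ so small that $U:=B(z,c_0(1+|z|))\subset\{w:|w|\ge|z|/2\}$; then \eqref{e1-2}--\eqref{e1-3} give $V\ge c_1g(|z|)$ on $U$. Using the strong Markov property at $\tau_U$ together with $V\ge c_1g(|z|)$ on $U$ for the part of the path up to $\tau_U$,
\[
p(t,z,u)\le e^{-c_1g(|z|)t}\,q_U(t,z,u)+\Ee_z\!\Big[e^{-\int_0^{\tau_U}V(X_s)\,ds}\,\I_{\{\tau_U<t\}}\,p(t-\tau_U,X_{\tau_U},u)\Big].
\]
Since $q_U(t,z,u)\le q(t,z,u)\le ct^{-d/\alpha}$ (see \eqref{e:heat1}), the first term is $\le ct^{-d/\alpha}e^{-c_1g(|z|)t}$; invoking $t>N_1C_5t_0(|z|)$ and the defining identity $e^{-t_0(|z|)g(|z|)}=t_0(|z|)(1+|z|)^{-\alpha}$ gives $e^{-c_1g(|z|)t}\le\big(t_0(|z|)(1+|z|)^{-\alpha}\big)^{c_1N_1C_5}$, and since $t^{-d/\alpha}\le c(1+|z|)^{q_1d/\alpha}$ and $t_0(|z|)\le C_2(1+\log(2+|z|))$ (Lemma \ref{dbl-polyn}), choosing $N_1$ large (depending on $c_1,C_5,q_1,d,\alpha$) makes the first term $\le c(1+|z|)^{-(d+\alpha)-q_1}\le c\,H(z)$.

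For the second term, bound $\exp(-\int_0^{\tau_U}V(X_s)\,ds)\le e^{-c_1g(|z|)\tau_U}$ on $U$ and expand the exit by the L\'evy system formula \eqref{l2-1-1} exactly as in the proof of Lemma \ref{L:2.1}, reducing it to
\[
c\int_0^t e^{-c_1g(|z|)s}\int_U q_U(s,z,v)\Big(\int_{U^c}\frac{p(t-s,w,u)}{|v-w|^{d+\alpha}}\,dw\Big)dv\,ds.
\]
On the far part $\{|w-z|\ge 2c_0(1+|z|)\}$ of $U^c$ one has $|v-w|\ge c_0(1+|z|)$ for all $v\in U$, so the inner integral is $\le c(1+|z|)^{-d-\alpha}\int_{\R^d}p(t-s,w,u)\,dw\le c(1+|z|)^{-d-\alpha}$; together with $\int_Uq_U(s,z,v)\,dv\le1$ and $\int_0^te^{-c_1g(|z|)s}\,ds\le(c_1g(|z|))^{-1}$ this part is $\le c\,H(z)$. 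The near shell $\{c_0(1+|z|)\le|w-z|<2c_0(1+|z|)\}$ is the genuinely delicate contribution: there $|v-w|$ can be small, and one must use the boundary factor $\delta_U(v)^{\alpha/2}/\sqrt{s}\wedge1$ in the Dirichlet estimate \eqref{e:heat2} (valid for $s\le c(1+|z|)^\alpha$; for larger $s$ the factor $e^{-c_1g(|z|)s}\le e^{-c_1g(|z|)(c_0(1+|z|))^\alpha}$ is super-polynomially small in $|z|$) together with the off-diagonal decay of $p(t-s,w,u)$ --- and, if needed, a further split of the near shell according to the position of $u$ --- reproducing the multi-term splitting of the proof of Lemma \ref{L:2.1}, after which routine but lengthy estimation bounds this part by $c\,H(z)$ as well. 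I expect this near-shell estimate to be the main obstacle, as it forces a careful trade-off between the weak killing available for short exit times and the off-diagonal smallness of $q_U$ and of $p(t-s,\cdot,u)$; the remaining ingredients are direct applications of Lemma \ref{L:2.1}, Corollary \ref{c2-2}, \eqref{e:heat1} and the by-now-standard first-exit bookkeeping.
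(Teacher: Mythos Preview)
Your proposal has a genuine gap in the near-shell part of the intermediate-time estimate. You plan to control
\[
\int_0^t e^{-c_1 g(|z|) s}\int_U q_U(s,z,v)\Big(\int_{\{c_0(1+|z|)\le |w-z|<2c_0(1+|z|)\}}\frac{p(t-s,w,u)}{|v-w|^{d+\alpha}}\,dw\Big)\,dv\,ds
\]
by ``reproducing the multi-term splitting of the proof of Lemma~\ref{L:2.1}''. But the analogy breaks down: in Lemma~\ref{L:2.1} the target point $x$ is \emph{far} from the ball and from the shell $W\setminus U$, so one has the uniform off-diagonal bound $p(t-s,w,x)\le ct|x-y|^{-d-\alpha}$ for every $w\in W$, which is what drives the whole computation. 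Here $u$ is arbitrary. When $u$ lies in or near the shell (so $|u|\asymp|z|$), there is no off-diagonal gain; the only available pointwise bound is $p(t-s,w,u)\le c(t-s)^{-d/\alpha}$, whose singularity at $s=t$ is not integrable once $d\ge\alpha$, and Corollary~\ref{c2-2} on $T_{t-s}^V1(u)$ is unavailable for small $t-s$. Splitting according to the position of $u$ does not avoid this: the bad configuration $|u|\asymp|z|$ persists. Note also that your intermediate range extends up to $T^\star(|z|)\gg |z|^\alpha$, so even for $|u|<|z|/4$ you cannot invoke Lemma~\ref{L:2.1} throughout (it needs $|z-u|\ge c\,t^{1/\alpha}$). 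The near-shell estimate is therefore not ``routine but lengthy''---it is the heart of the difficulty, and a one-shot first-exit argument does not close.

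The paper bypasses this by a bootstrap rather than a direct estimate. It first restricts to $C_5t_0(|z|)\le t\le |z|^\alpha$ (so that $|z-u|\ge |z|/2\ge t^{1/\alpha}/2$ when $|u|<|z|/4$ and Lemma~\ref{L:2.1} applies), settling for the crude bound
\[
p(2t,z,u)\le cH(z)\bigl(1+(1+|z|)^{M_0}\bigr),\qquad M_0=\tfrac{dq_1}{\alpha}+d+\alpha+q_1-\tfrac{\alpha}{2}.
\]
It then iterates via Chapman--Kolmogorov: writing $p((k+1)t,z,u)=\int p(kt,z,w)\,p(t,w,u)\,dw$, using the same $|u|\ge|z|/4$ versus $|u|<|z|/4$ dichotomy, and invoking $\int p(t,w,u)\,dw=T_t^V1(u)\le c(1+|z|)^{-\alpha/2}$ (from \eqref{est1} and doubling, since $|u|\ge|z|/4$), each step cuts the exponent by $\alpha/2$. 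After $N_1$ iterations the exponent is negative and one obtains $p(N_1t,z,u)\le cH(z)$ on $[C_5t_0(|z|),|z|^\alpha]$; a final semigroup step then covers $t>N_1|z|^\alpha$. This iterative gain is the key mechanism absent from your proposal.
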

\begin{proof}
Let $R_0>0$. For every $|z|\le R_0$ and $t>   C_5 t_0(|z|)$, we have $t\ge c_1$ by \eqref{l4-2-1b}, and so
\begin{align*}
p(t,z,u)\le c_2t^{-d/\alpha}\le \frac{c_3}{g(|z|)(1+|z|)^{d+\alpha}},\quad z,u\in \R^d\ {\rm with}\ |z|\le R_0,\ t>   C_5 t_0(|z|).
\end{align*}
 Here $C_5>0$ is the constant from Corollary $\ref{c2-2}$.
Next, we prove \eqref{l4-5-1} for every $|z|>R_0$ with $R_0$ large enough so that
$$\inf_{|z|>R_0}\left(|z|^\alpha-C_5t_0(|z|)\right)>0;$$   we note that such $R_0$ can be found due to \eqref{l4-2-1b}.

For $C_5t_0(|z|)\le t \le |z|^\alpha$,
 \begin{equation}\label{l4-2-5}
 \begin{split}
 p(2t,z,u)\le
 &\begin{cases} \displaystyle\int_{\R^d}
 p(t,z,w)p(t,w,u)\,dw,&\quad |u|\ge |z|/4\\
 c_{4}\left(\frac{1}{tg(|z|)}\wedge1\right)\frac{t}{|z-u|^{d+\alpha}},&\quad |u|<|z|/4\end{cases}\\
 \le&  \begin{cases}
 c_{5}t^{-d/\alpha} \displaystyle\int_{\R^d} p(t,z,w)\,dw,&\quad |u|\ge |z|/4\\
\frac{c_{4}}{g(|z|)(1+|z|)^{d+\alpha}},&\quad |u|<|z|/4
\end{cases}\\
 \le&  \begin{cases}  c_{6}(1+|z|)^{{dq_1}/{\alpha}} (1+|z|)^{-\alpha/2},&\quad |u|\ge |z|/4\\
 \frac{c_{4}}{g(|z|)(1+|z|)^{d+\alpha}},&\quad |u|<|z|/4\end{cases}\\
 \le& \frac{c_{7}}{g(|z|)(1+|z|)^{d+\alpha}}\left(1+(1+|z|)^{ {dq_1}/{\alpha}+d+\alpha+q_1-\alpha/2}\right).
 \end{split}
 \end{equation}
  Here in the first inequality we have used Lemma \ref{L:2.1} and the fact that  $|z-u|\ge |z|/2\ge t^{1/\alpha}/2$
  for every $|u|<|z|/4$
  and $t\le |z|^\alpha$, the third inequality
   follows from \eqref{l4-2-3a-}, and the fact $t>C_5t_0(|z|)\ge c_8(1+|z|)^{-q_1}$ (which
   has been established by \eqref{l4-2-1b}), and the last inequality is due to \eqref{l4-2-1}.

Furthermore, according to \eqref{l4-2-5}, we get
\begin{align*}
p(3t,z,u)\le&
\begin{cases}\displaystyle\int_{\R^d} p(2t,z,w)p(t,w,u)\,dw,&\quad |u|\ge |z|/4\\
c_{9}\left(\frac{1}{tg(|z|)}\wedge1\right)\frac{t}{|z-u|^{d+\alpha}},&\quad |u|<|z|/4\end{cases}\\
 \le&  c_{10}\begin{cases}
 \frac{(1+(1+|z|)^{{dq_1}/{\alpha}+d+\alpha+q_1-\alpha/2})}{g(|z|)(1+|z|)^{d+\alpha}}\displaystyle\int_{\R^d}p(t,w,u)\,dw ,&\quad |u|\ge |z|/4\\
 \frac{1}{g(|z|)(1+|z|)^{d+\alpha}},&\quad |u|<|z|/4\end{cases}\\
 \le&  c_{11}\begin{cases}  \frac{1}{g(|z|)(1+|z|)^{d+\alpha}}(1+(1+|z|)^{{dq_1}/{\alpha}+d+\alpha+q_1-\alpha}) ,&\quad |u|\ge |z|/4\\
 \frac{1}{g(|z|)(1+|z|)^{d+\alpha}},&\quad |u|<|z|/4\end{cases}\\
 \le& \frac{c_{12}}{g(|z|)(1+|z|)^{d+\alpha}}\left(1+(1+|z|)^{{dq_1}/{\alpha}+d+\alpha+q_1-\alpha}\right),\end{align*}
where in the first inequality we used Lemma \ref{L:2.1} (similarly as in the proof of the first inequality in \eqref{l4-2-5}), the second inequality follows from
 \eqref{l4-2-5}, and the third inequality is due to the fact that for all $|u|\ge |z|/4$ and $C_5t_0(|z|)\le t \le |z|^\alpha$,
\begin{align*}\int_{\R^d}p(t,w,u)\,dw =&T_t^V1(u)=T_{C_5t_0(|z|)}\left(T_{t-C_5t_0(|z|)}1\right)(u)\le T_{C_5t_0(|z|)} 1(u)\\
\le & c_{13}\left[\exp(-c_{14}C_5t_0(|z|)g(|u|))+\frac{C_5t_0(|z|)}{(1+|u|)^\alpha}\right]\\
\le & c_{15}\left[\exp(-c_{16}C_5t_0(|z|)g(|z|))+\frac{C_5t_0(|z|)}{(1+|z|)^\alpha}\right]\le c_{17}(1+|z|)^{-\alpha/2}\end{align*} thanks to \eqref{est1} and the arguments used for
 \eqref{c2-2-2}
(note that one can take the constant $C_5$ large enough to ensure the procedure in \eqref{c2-2-2}).

Now, by repeating the procedure above for $N_1$ times, where ${dq_1}/{\alpha}+d+\alpha+q_1-(N_1-1)\alpha/2<0$, we obtain that
 \begin{equation}\label{e:lf1}
 p(N_1t,z,u)\le \frac{c_{18}}{g(|z|)(1+|z|)^{d+\alpha}},\quad C_5t_0(|z|)\le t \le |z|^\alpha.
 \end{equation}

Hence, for any
$N_1|z|^\alpha\ge t\ge N_1C_5t_0(|z|)$, we can find $t_1\in [ C_5t_0(|z|), |z|^\alpha]$ such that $t=N_1t_1$. Then, by \eqref{e:lf1},
$$p(t,z,u)= p(N_1t_1,z,u)\le \frac{c_{18}}{g(|z|)(1+|z|)^{d+\alpha}}.$$ On the other hand, for any $t>N_1|z|^\alpha$,
\begin{align*}
p(t,z,u)&=\int_{\R^d}p(N_1|z|^\alpha,z,w)p\left(t-N_1|z|^\alpha,w,u\right)\,dw\\
&\le \frac{c_{19}}{g(|z|)(1+|z|)^{d+\alpha}}\cdot \int_{\R^d}p\left(t-N_1|z|^\alpha,w,u\right)\,dw\\
&\le \frac{c_{20}}{g(|z|)(1+|z|)^{d+\alpha}}=c_{20}H(z).
\end{align*}
The proof is complete.
\end{proof}

\subsection{$t_0(\cdot)$ is almost  decreasing}\label{subsection4.1}
In this subsection, we always assume that $t_0(\cdot)$ is almost decreasing,   i.e.,\ there is a
continuous and strictly
decreasing function $h: [0,\infty)\to (0,\infty)$ such that  \eqref{e4-1} is satisfied.
Define
$$s_0(t)=h^{-1}(t)\vee 2,\quad t\ge0,$$ where
$
h^{-1}(t):=\inf\{ s \ge 0:   h(s)\le t\}
$ and here we use the convention that $\inf \emptyset=-\infty$.
Below, set   $
T_0:=h(2) > 0$ and $T_{\infty}:=h(\infty)=\lim_{s \to \infty} h(s) \geq 0.$
As $h(\cdot)$ is
continuous and strictly
decreasing, it is easy to verify that
\begin{equation}\label{e4-3}
s_0(t)=h^{-1}(t)\ge 2\quad \hbox{for}\,\, t\in  (T_{\infty},T_0];
\end{equation}
moreover, for every $t\in (T_0, \infty)$,
and
for every $t \in (0,T_{\infty}]$
when $T_{\infty}>0$,
it holds that $s_0(t)=2$.

\begin{lemma}\label{L:3.1}
For every sufficiently large $C_0>0$ there are constants $C_2,C_3>0$ such that for all $x,y\in \R^d$ and $C_0 h(|x|)< t \le C_0T_0$, \normal
it holds that
\begin{equation}\label{l4-1-1}
p(t,x,y)\ge C_2 H(x)H(y)\int_{\{|z|\le s_0(t)\}}e^{-C_3tg(|z|)}\,dz.
\end{equation}
\end{lemma}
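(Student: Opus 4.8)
The plan is to turn the heuristic of Remark (3) into a proof: when $t>C_{0}t_{0}(|x|)$, the dominant contribution to $p(t,x,y)$ comes from trajectories that travel from $x$ into a neighbourhood of the origin, linger there for a time of order $t$, and then travel to $y$. First I would record two scale reductions, valid once $C_{0}$ is large and $|x|$ exceeds a threshold $R_{1}=R_{1}(C_{0})$; recall $t\le C_{0}T_{0}$ is bounded and $t>C_{0}h(|x|)\ge(C_{0}/C_{*})t_{0}(|x|)$ by \eqref{e4-1}. \emph{(a)} If $|x|\le R_{1}$, then $t_{0}(|x|)$ is bounded below by a positive constant, so $t\le C_{0}'t_{0}(|x|)$; moreover $H(x)\asymp1$ and $\int_{\{|z|\le s_{0}(t)\}}e^{-C_{3}tg(|z|)}\,dz\asymp1$ (since $t$ lies in a fixed bounded interval and $s_{0}(t)$ in a fixed bounded range), so \eqref{l4-1-1} would follow at once from the lower bound in Proposition \ref{P:2.4}. \emph{(b)} If $|x|>R_{1}$, then the doubling of $h$ in \eqref{e4-1} together with $C_{0}>C_{**}$ (and $R_{1}\ge4$ in the regime $s_{0}(t)=2$) gives $|x|>2s_{0}(t)$, while for $|z|\le s_{0}(t)$ one has $h(|z|)\ge h(s_{0}(t))\ge t/C_{0}$, hence $t\le C_{*}C_{0}\,t_{0}(|z|)$; so Proposition \ref{P:2.4} is available with base point $z$ and any time of order $t$. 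It remains to treat $|x|>R_{1}$; then also $|y|\ge|x|>2s_{0}(t)$.

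Next I would set $\varepsilon:=t_{0}(|x|)$ and $t_{2}:=t/2-\varepsilon$; since $\varepsilon\le C_{*}h(|x|)<C_{*}t/C_{0}<t/4$ for $C_{0}>4C_{*}$, the times $\varepsilon$, $t_{2}\in[t/4,t/2]$, $t/2$ are positive and sum to $t$. With $r_{2}:=\min\{1,t_{2}^{1/\alpha}\}$, Chapman--Kolmogorov gives
\[
p(t,x,y)\ \ge\ \int_{\{|z|\le s_{0}(t)/2\}}\Bigl(\int_{B(z,r_{2})}p(\varepsilon,x,v)\,p(t_{2},v,z)\,dv\Bigr)\,p(t/2,z,y)\,dz,
\]
and the core is to bound each of the three kernels from below. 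For $p(t/2,z,y)$ I use Proposition \ref{P:2.4} with base point $z$ (legitimate since $|z|\le|y|$ and $t/2\lesssim t_{0}(|z|)$): using $|z-y|\asymp|y|$, $|y|>t^{1/\alpha}$ (so the minimum there is $t|z-y|^{-d-\alpha}$), and $tg(|y|)\ge tg(|x|)\ge(C_{0}/C_{*})t_{0}(|x|)g(|x|)\ge1$ (the last step for $|x|>R_{1}$, because $t_{0}(|x|)g(|x|)\to\infty$ by \eqref{e1-6} and \eqref{l4-2-1b}), the factors $(tg(|y|))^{-1}\wedge1=(tg(|y|))^{-1}$ and $t|z-y|^{-d-\alpha}\asymp t|y|^{-d-\alpha}$ multiply to $\asymp H(y)$, giving $p(t/2,z,y)\ge c\,H(y)\,e^{-c\,tg(|z|)}$. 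For $p(\varepsilon,x,v)$ with $v\in B(z,r_{2})$, so $|v|\le s_{0}(t)<|x|/2$, I use Lemma \ref{L:2.2} with $v,x$ in the roles of $x,y$ (its time constraint holds because $|v|<|x|$ and $t_{0}$ is almost decreasing, and the hypothesis $|v-x|>C_{1}'\varepsilon^{1/\alpha}$ holds since $|v-x|\ge|x|/2$ while $\varepsilon^{1/\alpha}\lesssim(\log|x|)^{1/\alpha}$ by \eqref{l4-2-1b}); since $\varepsilon g(|x|)=t_{0}(|x|)g(|x|)\ge1$, the factor $(\varepsilon g(|x|))^{-1}\wedge1$ equals $(\varepsilon g(|x|))^{-1}$, whose product with $\varepsilon|v-x|^{-d-\alpha}\asymp\varepsilon|x|^{-d-\alpha}$ collapses to $g(|x|)^{-1}|x|^{-d-\alpha}\asymp H(x)$, so $p(\varepsilon,x,v)\ge c\,H(x)\,e^{-c\,\varepsilon g(|v|)}$; as $\varepsilon\le C_{*}t/C_{0}$ and $g(|v|)\le C\,g(|z|)$ on $B(z,r_{2})$ (doubling of $g$, $r_{2}\le1$), this upgrades to $p(\varepsilon,x,v)\ge c\,H(x)\,e^{-c\,tg(|z|)}$. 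Finally, for $p(t_{2},v,z)$ with $|v-z|\le r_{2}\le t_{2}^{1/\alpha}$, Proposition \ref{P:2.4} gives $p(t_{2},v,z)\ge c\,((t_{2}g(|z|))^{-1}\wedge1)\,e^{-c\,t_{2}g(|z|)}\,t_{2}^{-d/\alpha}$; integrating over $B(z,r_{2})$ contributes the bounded factor $r_{2}^{d}t_{2}^{-d/\alpha}=\min\{1,t_{2}^{-d/\alpha}\}\ge\min\{1,(C_{0}T_{0})^{-d/\alpha}\}$, and, using $u^{-1}\ge e^{-u}$ and $t_{2}\le t$, the surviving factor $((t_{2}g(|z|))^{-1}\wedge1)\,e^{-c\,t_{2}g(|z|)}$ is $\ge c\,e^{-c\,tg(|z|)}$.

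Multiplying the three bounds, integrating in $z$, and absorbing into the constants all factors that do not depend on $x,y,z$ (they are bounded below since $t\le C_{0}T_{0}$), I would obtain $p(t,x,y)\ge c\,H(x)H(y)\int_{\{|z|\le s_{0}(t)/2\}}e^{-C_{3}tg(|z|)}\,dz$ for a suitable $C_{3}$; the substitution $z\mapsto2z$ with the doubling \eqref{e1-2} gives $\int_{\{|z|\le s_{0}(t)\}}e^{-C_{3}tg(|z|)}\,dz\le 2^{d}\int_{\{|z|\le s_{0}(t)/2\}}e^{-C_{3}tg(|z|)}\,dz$, which restores the range $\{|z|\le s_{0}(t)\}$ and yields \eqref{l4-1-1}. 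I expect the lower bound for $p(\varepsilon,x,v)$ to be the main obstacle: one must recognise that on the short scale $\varepsilon=t_{0}(|x|)$ the process can afford a single long jump from $x$ into the origin region, and it is precisely the defining relation $t_{0}(|x|)g(|x|)=\alpha\log(1+|x|)-\log t_{0}(|x|)$ from \eqref{e1-6} that makes the ``survival'' factor $(\varepsilon g(|x|))^{-1}$ (of order $t_{0}(|x|)(1+|x|)^{-\alpha}$) and the ``jump-rate'' factor $\varepsilon|x|^{-d-\alpha}$ combine to $H(x)$ while forcing the $\wedge1$ to be inactive; the residual $e^{-c\varepsilon g(|v|)}$ then must be absorbed into $e^{-C_{3}tg(|z|)}$, which works only because $\varepsilon\lesssim t/C_{0}$ and $g$ is doubling. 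The remaining bookkeeping --- $|x|>2s_{0}(t)$, $t\lesssim t_{0}(|z|)$ on $\{|z|\le s_{0}(t)\}$, and that every stray constant is either bounded or absorbable via $u^{-1}\ge e^{-u}$ --- is routine.
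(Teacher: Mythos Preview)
Your argument is correct, but it takes a longer route than the paper. The paper simply uses the two–step Chapman--Kolmogorov bound
\[
p(t,x,y)\ \ge\ \int_{\{|z|\le s_0(t)\}} p(t/2,x,z)\,p(t/2,z,y)\,dz,
\]
and applies Proposition~\ref{P:2.4} \emph{directly} to each factor. The key observation you also made---that $t\lesssim t_0(|z|)$ for $|z|\le s_0(t)$---means Proposition~\ref{P:2.4} already controls $p(t/2,z,x)$ in one shot: with $|z|\le s_0(t)<|x|/2$ one gets $p(t/2,z,x)\ge c\,(tg(|x|))^{-1}e^{-ctg(|z|)}\,t|z-x|^{-d-\alpha}\asymp c\,H(x)\,e^{-ctg(|z|)}$ using \eqref{e:sss}, and the same for $p(t/2,z,y)$. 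That is exactly the combination you engineer via the three–step decomposition $t=\varepsilon+t_2+t/2$ with $\varepsilon=t_0(|x|)$, so your intermediate $v$–integral and the separate ``short jump + linger'' analysis are unnecessary detours that reproduce what Proposition~\ref{P:2.4} gives immediately. Your case split (small $|x|$ via Proposition~\ref{P:2.4}; large $|x|$ via the integral decomposition) matches the paper's structure, and your closing substitution reducing $\{|z|\le s_0(t)/2\}$ to $\{|z|\le s_0(t)\}$ is fine; the paper simply integrates over $\{|z|\le s_0(t)\}$ from the start, which avoids that step too.
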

\begin{proof}
Let $C_0 \geq C_{**}$, where $C_{**}$ is the constant in \eqref{e4-1}, and assume first that $C_0 h(|x|)< t \le T_0$.
We have
$$
T_0=h(2)\ge t> C_0 h(|x|)\ge C_0 C_{**}^{-1} h(|x|/2)\ge h(|x|/2) > h(\infty)=T_{\infty}.
$$
This, along with \eqref{e4-3} and the fact that $h(\cdot)$ is
continuous and strictly
decreasing, yields that $|x|\ge 2h^{-1}(t)=2 s_0(t)\ge4$
for every $C_0 h(|x|)< t \le T_0$.
On the other hand, for every $T_{\infty} < t\le T_0$ and $z\in \R^d$ with $|z|\le s_0(t)$, by \eqref{e4-1} and \eqref{e4-3}, it holds that
\begin{equation}\label{e:note1}
t_0(|z|)\ge c_1h(|z|)\ge   c_1 h(s_0(t))=c_1t.
\end{equation}
Hence, according to \eqref{p3-1-1} and \eqref{e:note1}, for every $z\in \R^d$ with $|z|\le s_0(t)$ (also by noting here that
 $|z|\le |x|/2\le |y|/2$ since $|x|\ge 2s_0(t)$)
\begin{align*}
p(t/2,z,x)\ge & c_2\left(\frac{1}{tg(|x|)}\wedge 1\right) e^{-c_3 tg(|z|)}\frac{t}{|z-x|^{d+\alpha}}\ge \frac{c_4}{g(|x|)}e^{-c_3 tg(|z|)}\frac{1}{(1+|x|)^{d+\alpha}},
\end{align*}
where we used the facts that for all $C_0 h(|x|)<t\le T_0$,
\begin{align*}
c_5t^{1/\alpha}\le 2\le s_0(t)\le |x|/2\le|z-x|\le c_6(1+|x|),\quad |x|\ge 2 s_0(t)\hbox{ and } |z|\le s_0(t)
\end{align*} and \begin{equation}\label{e:sss}
tg(|x|)\ge c_7t_0(|x|)g(|x|)\ge c_8,
\end{equation}
thanks to \eqref{e1-6}  (see
the second to last
line in the proof of Lemma \ref{dbl-polyn}).

Since $|y|\ge |x|\ge 2s_0(t)$, similarly we can obtain that for every $z\in \R^d$ with $|z|\le s_0(t)$,
\begin{align*}
p(t/2, z,y)\ge &c_9 \left(\frac{1}{tg(|y|)}\wedge1\right)e^{-c_{10} tg(|z|)}\frac{t}{|y-z|^{d+\alpha}}\ge  \frac{c_{11}}{g(|y|)}  e^{-c_{12} tg(|z|)}\frac{1}{(1+|y|)^{d+\alpha}}.
\end{align*}

Therefore,  for all $C_0 h(|x|)\le t \le T_0$,
\begin{align*}
p(t,x,y)&\ge \int_{\{|z|\le s_0(t)\}}p(t/2,x,z)p(t/2, z,y)\,dz\\
&\ge  \frac{c_{13}}{g(|x|)g(|y|)(1+|x|)^{d+\alpha}(1+|y|)^{d+\alpha}}\int_{\{|z|\le s_0(t)\}} e^{-c_{14} tg(|z|)}\,dz,
\end{align*}
which is the claimed estimate.

Suppose now that $t > C_0 h(|x|)$ and $ T_0 < t \le C_0T_0$. Since $s_0(t) = 2$ for this range of $t$, we can follow the argument above to show the same estimate when $|y| \geq |x| \geq 4 = 2s_0(t)$. When $|x| < 4$ and $|y| \geq |x|$, by the fact $t\le C_0T_0\le c_{15}t_0(|x|)$ (since $|x|<4$), we can directly apply \eqref{p3-1-1} to obtain
\begin{align*}
p(t,x,y)&\ge
c_{16}\left(\frac{1}{tg(|y|)}\wedge 1\right)e^{-c_{17}tg(|x|)}\left(t^{-d/\alpha}\wedge \frac{t}{|x-y|^{d+\alpha}}\right)\\
& \ge \frac{c_{18}}{g(|x|)g(|y|)(1+|x|)^{d+\alpha}(1+|y|)^{d+\alpha}} \\
        & \ge \frac{c_{18}}{g(|x|)g(|y|)(1+|x|)^{d+\alpha}(1+|y|)^{d+\alpha}} \int_{\{|z|\le s_0(t)\}} e^{-c_{17} tg(|z|)}\,dz.
\end{align*}
Here in the second inequality we have used the facts
$$
g(|x|)\asymp (1+|x|)\asymp 1,\quad |x|\le 4
$$ and
$$
\left(t^{-d/\alpha}\wedge \frac{t}{|x-y|^{d+\alpha}}\right)\ge c_{20}(1+|y|)^{-d-\alpha},\quad T_0<t \le C_0T_0,\ |y|\ge |x|,\ |x|\le 4,
$$ and
the last inequality is due to the property
\begin{align*}
\int_{\{|z|\le s_0(t)\}} e^{-c_{17} tg(|z|)}\,dz=\int_{\{|z|\le 2\}} e^{-c_{17} tg(|z|)}\,dz\le c_{21},\quad T_0<t \le C_0T_0.
\end{align*}
By now the proof of \eqref{l4-1-1} is complete.
\end{proof}

\begin{lemma}\label{L:3.2}
For sufficiently large $C_0>0$ there are constants $C_4, C_5,C_6$  such that for every $C_0 h(|x|)< t \le C_0T_0$,
\begin{equation}\label{l4-2-2}
p(t,x,y)\le C_4H(x)H(y)
\int_{\{|z|\le s_0(C_5t)\}}e^{-C_6tg(|z|)}\,dz.
\end{equation}
\end{lemma}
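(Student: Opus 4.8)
The plan is to mirror the structure of the lower bound in Lemma \ref{L:3.1}, but now using the upper bounds available: Proposition \ref{P:2.4} (valid when $0<t\le C_0't_0(\cdot)$) and the rough bound $p(t,z,u)\le C_1 H(z)$ from Lemma \ref{l4-5} (valid when $t>N_1C_5t_0(|z|)$). As before, first restrict to $C_0h(|x|)<t\le T_0$ and take $C_0$ at least $C_{**}$, so that $|x|\ge 2s_0(t)\ge 4$ and also $|y|\ge|x|\ge 2s_0(t)$; the range $T_0<t\le C_0T_0$ with $|x|<4$ can be handled at the end as a bounded perturbation, exactly as in Lemma \ref{L:3.1}. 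Fix a threshold radius $R(t):=s_0(\tilde C t)$ for a suitable constant $\tilde C\ge 1$ to be chosen (this is the source of the $C_5$ in the statement), and split
\[
p(t,x,y)=\int_{\{|z|\le R(t)\}}p(t/2,x,z)p(t/2,z,y)\,dz+\int_{\{|z|>R(t)\}}p(t/2,x,z)p(t/2,z,y)\,dz=:A+B.
\]

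For the term $A$: on $\{|z|\le R(t)\}$ we have $|z|\le |x|/2\le|y|/2$ (after enlarging $C_0$ so that $R(t)\le s_0(t)\le |x|/2$, using monotonicity of $s_0$), hence $|x-z|\asymp(1+|x|)$ and $|y-z|\asymp(1+|y|)$, and moreover $t^{1/\alpha}\lesssim 1\le R(t)\le|x-z|$. We still need $t/2\le C_0't_0(|x|)$ and $t/2\le C_0't_0(|y|)$ to invoke Proposition \ref{P:2.4}; this holds because $t\asymp h(|x|)\le C_{**}h(|x|/2)$-type estimates combined with \eqref{e4-1} give $t\lesssim t_0(|x|)$, and $t_0(\cdot)$ being almost decreasing with $|y|\ge|x|$ gives $t_0(|y|)\gtrsim t_0(|x|)\gtrsim t$. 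Then Proposition \ref{P:2.4} yields
\[
p(t/2,x,z)\le \frac{c_1}{g(|x|)(1+|x|)^{d+\alpha}}e^{-c_2tg(|z|)},\qquad
p(t/2,z,y)\le \frac{c_3}{g(|y|)(1+|y|)^{d+\alpha}}e^{-c_4tg(|z|)},
\]
using $\frac{1}{tg(|x|)}\wedge1\le \frac{c}{g(|x|)}$ (since $tg(|x|)\gtrsim t_0(|x|)g(|x|)\gtrsim 1$ by \eqref{e:sss}-type reasoning) and $t^{-d/\alpha}\wedge\frac{t}{|x-z|^{d+\alpha}}\le \frac{c}{(1+|x|)^{d+\alpha}}$ (again using $t\lesssim(1+|x|)^\alpha$). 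Integrating, and dropping one of the two exponentials, gives $A\le c\,H(x)H(y)\int_{\{|z|\le R(t)\}}e^{-c' tg(|z|)}\,dz$, which is of the claimed form with $C_5=\tilde C$.

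For the term $B$: here $|z|>R(t)=s_0(\tilde C t)\ge h^{-1}(\tilde C t)$, so $h(|z|)\le \tilde C t$ (when $|z|\ge 2$, which we may assume since otherwise the region is empty for $t$ large), whence by \eqref{e4-1} we get $t_0(|z|)\le C_*h(|z|)\le C_*\tilde C t$, i.e.\ $t/2\ge (2C_*\tilde C)^{-1}t_0(|z|)$. Choosing $\tilde C$ large enough that $t/2>N_1C_5 t_0(|z|)$, Lemma \ref{l4-5} applies and gives $p(t/2,z,y)\le C_1 H(z)\le C_1 H(x)$ — wait, that goes the wrong way; instead use $p(t/2,x,z)\le q(t/2,x,z)$ together with $p(t/2,z,y)\le C_1 H(z)$, or better, symmetrize: since $|x|\le|y|$ it is cleaner to bound $p(t/2,z,y)\le C_1 H(z)$ and $p(t/2,x,z)\le C_1 H(x)$ — but the latter needs $t/2>N_1C_5t_0(|x|)$, which holds because $t\gtrsim h(|x|)\gtrsim t_0(|x|)$ after enlarging $C_0$. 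Then $B\le c\,H(x)\int_{\{|z|>R(t)\}}H(z)\,dz$, and since $H(z)=g(|z|)^{-1}(1+|z|)^{-d-\alpha}\le(1+|z|)^{-d-\alpha}$ is integrable, $\int_{\{|z|>R(t)\}}H(z)\,dz\le c(1+R(t))^{-\alpha}$. The remaining task is to absorb this into the right-hand side: one shows $(1+R(t))^{-\alpha}\le c\,H(y)\int_{\{|z|\le R(t)\}}e^{-c'tg(|z|)}\,dz$ — plausible because $H(y)\gtrsim(1+|y|)^{-d-\alpha}g(|y|)^{-1}$ and $|y|\asymp|y-z|$ etc.; more carefully, compare $B$ with $A$ directly by noting both carry the factor $H(x)H(y)$ once the elementary inequality $H(z)\lesssim H(y)\cdot(\text{something integrable against }e^{-c'tg})$ is set up, or simply observe that for $|z|>R(t)$ with $R(t)$ bounded below, the integral $\int_{\{|z|\le R(t)\}}e^{-c'tg(|z|)}\,dz$ is bounded below by a constant times $\min\{R(t)^d, (tg(R(t)))^{-d/q_1}\}$-type quantity, while $B$ is at most $c\,H(x)H(y)(1+|y|)^{d+\alpha}g(|y|)(1+R(t))^{-\alpha}$; choosing $\tilde C$ appropriately balances these.

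The main obstacle is exactly this last absorption step in $B$: making the tail estimate $\int_{\{|z|>R(t)\}}H(z)\,dz$ comparable to $H(x)H(y)\int_{\{|z|\le R(t)\}}e^{-C_6tg(|z|)}\,dz$ requires a careful choice of the constant $C_5$ (equivalently $\tilde C$) relating the truncation radius in the upper bound to the one in the lower bound, together with the observation — made precise via Lemma \ref{dbl-polyn} and the doubling of $g$ — that $tg(|z|)$ stays bounded for $|z|\le R(t)=s_0(C_5t)$, so that the integral $\int_{\{|z|\le R(t)\}}e^{-C_6tg(|z|)}\,dz$ is genuinely of order $R(t)^d$ up to constants and does not collapse. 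Once the radius bookkeeping is pinned down, both $A$ and $B$ are routine applications of the already-established pointwise bounds.
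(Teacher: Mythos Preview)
Your treatment of the near-diagonal piece $A$ is essentially right and matches the paper's main term (you should note that Proposition~\ref{P:2.4} applied to the pair $(x,z)$ with $|z|\le R(t)\le|x|$ requires $t/2\le C_0't_0(|z|)$, not $t_0(|x|)$; this holds because $|z|\le s_0(t)$ forces $h(|z|)\ge t$, hence $t_0(|z|)\gtrsim t$ --- your stated justification via ``$t\asymp h(|x|)$'' is incorrect since only $t>C_0h(|x|)$ is assumed).

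The genuine gap is in $B$, and it cannot be closed by ``choosing $\tilde C$ appropriately''. With only Lemma~\ref{l4-5} available, the best you get on $\{|z|>R(t)\}$ is $p(t/2,x,z)p(t/2,z,y)\le c\,\min(H(x),H(z))\min(H(z),H(y))$. On the range $R(t)<|z|<|x|$ this equals $c\,H(x)H(y)$, and integrating gives $c\,H(x)H(y)\,|x|^d$, which blows up; on $|x|<|z|<|y|$ it equals $c\,H(z)H(y)$, whose integral is $\lesssim H(y)(1+R(t))^{-\alpha}$ with no $H(x)$ factor. In either case the target bound fails once $|x|$ (or $|y|$) is taken large with $t$ fixed, because $R(t)$ depends only on $t$. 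The missing idea is a \emph{bootstrap}. The paper first combines Lemma~\ref{l4-5} with Corollary~\ref{c2-2} to get the improved one-step bound $p(2t,z,u)\le c\,H(z)(1+|u|)^{-\alpha/2}$ for $|z|,|u|>\tilde s_0(t)$ (this is \eqref{e:lf2}); convolving once more then yields, for $|z|,|u|>\tilde s_0(t)$,
\[
p(4t,z,u)\le c\,H(z)H(u)\,\max\{(1+|u|)^{d-\alpha},1\}\int_{\{|w|\le \tilde s_0(t)\}}e^{-c'tg(|w|)}\,dw,
\]
where now \emph{both} $H$-factors are present but with a polynomial defect $(1+|u|)^{d-\alpha}$. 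Each further convolution step knocks this exponent down by $\alpha$ (cf.\ \eqref{l4-2-6}), and after $N_2$ iterations with $d-N_2\alpha<0$ the defect disappears. Your single convolution cannot produce $H(x)H(y)$ times an integrable remainder; the iteration is not a technicality but the mechanism that manufactures the second $H$-factor.
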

\begin{proof}
According to \eqref{e4-1},
\eqref{l4-2-3a-} and \eqref{l4-5-1},
there exists a constant
$\tilde C_0>1$
such that
\begin{equation}\label{l4-2-4}
T_{t}^V1(z)\le c_{1}(1+|z|)^{-\alpha/2},\quad z\in \R^d,\ t>0\ {\rm with}\ t\ge\tilde C_0h(|z|)
\end{equation} and
\begin{equation}\label{l4-2-3a}
p(t,z,u)\le c_{1}H(z),\quad u,z\in \R^d,\ t>0\ {\rm with}\ t\ge \tilde C_0h(|z|).
\end{equation}

 Furthermore, by \eqref{e4-1} and the fact that $h(\cdot)$ is continuous and strictly decreasing,
as well as that $s_0(t)=h^{-1}(t)$  when
$T_{\infty} \le h(|x|) < t\le T_0$,
we obtain that for all
$\tilde C_0h(|x|)< t \le \tilde C_0 T_0$
the following properties hold:
\begin{equation}\label{l4-2-3}
\begin{split}
t\le \tilde C_0h(|z|),\ &\quad  z\in \R^d \,\,{\rm   with}\ |z|\le s_0\left(\frac{t}{\tilde C_0}\right)=:\tilde s_0(t),\\
t > \tilde C_0h(|z|),\ &\quad z\in \R^d\,\, {\rm   with}\ |z|>s_0\left(\frac{t}{\tilde C_0}\right)=:\tilde s_0(t).
\end{split}
\end{equation}

Now, for every
$\tilde C_0h(|x|)< t \le \tilde C_0 T_0$
and $|z|>\tilde s_0(t)$,
\begin{equation}\label{e:lf2}
\begin{split}
\quad p(2t,z,u)=&\int_{\R^d} p(t,z,w)p(t,w,u)\,dw\\
\le& c_{2}H(z)T_t^V1(u)\\
\le& c_{3}H(z)\begin{cases} e^{-c_{4}tg(|u|)}, &\quad |u|\le \tilde s_0(t)\\
(1+|u|)^{-\alpha/2}, &\quad |u|> \tilde s_0(t),
\end{cases}
\end{split}
\end{equation}
where we have used \eqref{e:note2},  \eqref{l4-2-4}, \eqref{l4-2-3a} and \eqref{l4-2-3}.
 In the same way,  we can obtain that for every $c_5\ge2$,
$\tilde C_0h(|x|)< t \le \tilde C_0 T_0$,
 $|z|>\tilde s_0(t)$ and $|u|\le \tilde s_0(t)$,
\begin{equation}\label{e:lf2--}p(c_5t,z,u)=\int_{\R^d} p((c_5-1)t,z,w)p(t,w,u)\,dw \le c_{6}H(z) e^{-c_{4}tg(|u|)}.\end{equation}

For any
$\tilde C_0h(|x|)< t \le \tilde C_0 T_0$
and $z,u\in \R^d$ with $|z|\ge \tilde s_0(t)$ and $|u|\ge \tilde s_0(t)$,
we write
\begin{align*}
  p(4t,z,u)
&=\int_{\R^d} p(2t,z,w)p(2t,w,u)\,dw\\
&= \int_{\{|w|\le \tilde s_0(t)\}}+\int_{\{\tilde s_0(t)< |w|\le |u|\}}+\int_{\{|w|> |u|\}}  p(2t,z,w)p(2t,w,u)\,dw\\
&= :I_1+I_2+I_3. \end{align*}

According to \eqref{e:lf2},
$$p(2t,z,w)\le c_{3}H(z)e^{-c_{4}t g(|w|)},\quad p(2t,u,w)\le c_{3}H(u)e^{-c_{4}tg(|w|)}$$ for all $|w|\le \tilde s_0(t)$, and so
$$I_1\le c_{7} H(z)H(u)\int_{\{|w|\le \tilde s_0(t)\}}e^{-2c_{14}tg(|w|)}\,dw.$$
Similarly, by \eqref{e:lf2}, we have
\begin{align*}
I_2&\le  c_{8} H(z)H(u)\int_{\{\tilde s_0(t)< |w|\le |u|\}}(1+|w|)^{-\alpha}\,dw\\
&\le c_{9} H(z)H(u)\max\{(1+|u|)^{d-\alpha},1\}(1+\log(1+|u|)\I_{\{d=\alpha=1\}}).
\end{align*}
Furthermore, by \eqref{e:lf2} again, for all $|w|\ge |u|\ge \tilde s_0(t)$,
$$p(2t,z,w)\le c_{3}H(z)(1+|w|)^{-\alpha/2},\quad p(2t,w,u)\le c_{3}H(w)(1+|u|)^{-\alpha/2}.$$
This yields (by noting that $H(w)=\frac{1}{g(|w|)(1+|w|)^{d+\alpha}}$)
\begin{align*}
I_3\le & c_{10}H(z)(1+|u|)^{-\alpha/2} \int_{\{|w|> |u|\}}(1+|w|)^{-\alpha/2}H(w)\,dw\\
\le& c_{11}H(z)(1+|u|)^{-2\alpha}g(|u|)^{-1}\le  c_{12}H(z)H(u)(1+|u|)^{d-\alpha}.
\end{align*}

Combining with all the estimates above for $I_1$, $I_2$ and $I_3$, we get
\begin{equation}\label{e:lf3}
\begin{split}
 p(4t,z,u)\le &c_{13}H(z)H(u) \max\{(1+|u|)^{d-\alpha},1\}(1+\log(1+|u|)\I_{\{d=\alpha=1\}})\\
&\times\int_{\{|w|\le \tilde s_0(t)\}}e^{-c_{14}tg(|w|)}\,dw
\end{split}
\end{equation}
for all $z,u\in \R^d$, $t>0$ with $|z|\ge \tilde s_0(t)$, $|u|\ge \tilde s_0(t)$ and
$\tilde C_0h(|x|)< t \le \tilde C_0 T_0$.
When $d\neq \alpha$, this along with
 \eqref{e:lf2--} yields that for all
$\tilde C_0h(|x|)< t \le \tilde C_0 T_0$
and $|z|\ge \tilde s_0(t)$,
\begin{equation}\label{e:lf4}
\begin{split}
  p(4t,z,u)
 \le c_{15}H(z)
\begin{cases} e^{-c_{16}t g(|u|)},&\quad |u|\le \tilde s_0(t),\\
(1+|u|)^{-\min\{d+\alpha,2\alpha\}}\displaystyle\int_{\{|w|\le \tilde s_0(t)\}}e^{-c_{17}tg(|w|)}\,dw,&\quad |u|> \tilde s_0(t).
\end{cases}
\end{split}
\end{equation}
Here we have used the fact that
\begin{align*}
\int_{\{|w|\le \tilde s_0(t)\}}e^{-c_{17}tg(|w|)}\,dw&\ge
\int_{\{|w|\le   c_{18}\}}e^{-c_{17}tg(|w|)}\,dw\ge c_{19}.
\end{align*}
When $ d= \alpha=1$, we can use the following estimate instead of \eqref{l4-2-4}:
$$T_{t}^V1(z)\le c_{20}(1+|z|)^{-\alpha}\log(1+|z|),\quad z\in \R^d,\ t>0\ {\rm with}\ t\ge\tilde C_0h(|z|),$$ thanks to \eqref{l4-2-3a-}. Then, by the same arguments as above, we can prove that \eqref{e:lf4} still holds.

In the next step, we obtain the estimate of $p(6t,z,u)$ for every
$\tilde C_0h(|x|)< t \le \tilde C_0 T_0$
and $z,u\in \R^d$ with $|z|> \tilde s_0(t)$ and $|u|> \tilde s_0(t)$
via the iterated estimate \eqref{e:lf4}.
We have
\begin{align*}
p(6t,z,u) & = \int_{\R^d} p(4t,z,w)p(2t,w,u)\,dw \\
&=\int_{\{|w|\le \tilde s_0(t)\}}+\int_{\{\tilde s_0(t)<|w|\le |u|\}}+\int_{\{|w|> |u|\}} p(4t,z,w)p(2t,w,u)\,dw\\
&=:J_1+J_2+J_3.
\end{align*}
Applying \eqref{e:lf2} and \eqref{e:lf4} directly, we derive
\begin{align*}
J_1\le c_{21}H(z)H(u)\int_{\{|w|\le \tilde s_0(t)\}}e^{-c_{22}tg(|w|)}\,dw.
\end{align*}
Using  \eqref{e:lf2} and the second inequality of \eqref{e:lf4} instead of the first one for the estimate of $p(4t,z,w)$, we get
\begin{align*}
J_2&\le c_{23}H(z)H(u)\int_{\{|w|\le \tilde s_0(t)\}}e^{-c_{24}tg(|w|)}\,dw\cdot
\int_{\{\tilde s_0(t)<|w|\le |u|\}}(1+|w|)^{-\min\{d+\alpha,2\alpha\}-\alpha/2}\,dw\\
&\le c_{25}H(z)H(u)\max\{(1+|u|)^{d-2\alpha},1\}\cdot\int_{\{|w|\le \tilde s_0(t)\}}e^{-c_{24}tg(|w|)}\,dw
\end{align*} and
\begin{align*}
J_3&\le c_{26}H(z)(1+|u|)^{-\min\{d+\alpha,2\alpha\}}\int_{\{|w|\le \tilde s_0(t)\}}e^{-c_{27}tg(|w|)}\,dw\cdot
\int_{\{|w|> |u|\}}(1+|w|)^{-\alpha/2}H(w)\,dw\\
&\le c_{28}H(z)H(u)\max\{(1+|u|)^{d-2\alpha},1\}\cdot\int_{\{|w|\le \tilde s_0(t)\}}e^{-c_{27}tg(|w|)}\,dw.
\end{align*}
Combining with all the estimates above for $J_1$--$J_3$ yields that for all $\tilde C_0 T_{\infty} < t \le \tilde C_0 T_0$ and $z,u\in \R^d$ with $|z|> \tilde s_0(t)$ and $|u|> \tilde s_0(t)$,
\begin{equation}\label{l4-2-6}\begin{split}
p(6t,z,u) \le c_{29}H(z)H(u)\max\{(1+|u|)^{d-2\alpha},1\}\cdot\int_{\{|w|\le \tilde s_0(t)\}}e^{-c_{30}tg(|w|)}\,dw.
\end{split}\end{equation}

Furthermore, according to the decomposition
\begin{align*}
p(8t,z,u)
&=\int_{\{|w|\le \tilde s_0(t)\}}+\int_{\{\tilde s_0(t)<|w|\le |u|\}}+\int_{\{|w|> |u|\}} p(6t,z,w)p(2t,w,u)\,dw,
\end{align*}
one can apply \eqref{l4-2-6} for the estimate of $p(6t,z,w)$ and follow the arguments for \eqref{l4-2-6} to prove that
for every
$\tilde C_0 h_0(|x|) < t \le \tilde C_0 T_0$,
$|z|> \tilde s_0(t)$ and $|u|> \tilde s_0(t)$,
\begin{align*}
p(8t,z,u)\le c_{31}H(z)H(u)\max\{(1+|u|)^{d-3\alpha},1\}\cdot\int_{\{|w|\le \tilde s_0(t)\}}e^{-c_{32}tg(|w|)}\,dw.
\end{align*}
Then, following this procedure iteratively for $N_2$ times, where $N_2$ is such that
$d-N_2\alpha<0$, we can show that for all
$\tilde C_0 h_0(|x|) < t \le \tilde C_0 T_0$,
$|z|> \tilde s_0(t)$ and $|u|> \tilde s_0(t)$,
\begin{align*}
p(2(N_2+1)t,z,u)&\le c_{33}H(z)H(u)\int_{\{|w|\le \tilde s_0(t)\}}e^{-c_{34}tg(|w|)}\,dw \\
& =  c_{33}H(z)H(u)\int_{ \{|w|\le s_0(\tilde C_0^{-1}t)\}}e^{-c_{34}tg(|w|)}\,dw.
\end{align*}
Combining this estimate with the fact that
$\tilde C_0 h(|x|)< t \le \tilde C_0T_0$ implies $|y|\ge |x|> \tilde s_0(t)$,
we prove the desired bound for $2(N_2+1) \tilde C_0 h(|x|) < t \le 2(N_2+1) \tilde C_0  T_0$, i.e.,\ \eqref{l4-2-2} holds with $C_0 =2(N_2+1) \tilde C_0 $
and $C_5=\tilde C_0^{-1}$.
\end{proof}

\begin{lemma}\label{l4-7}
For sufficiently large $C_0>0$ there are constants $C_{7}, C_{8}>0$ such that for all $t> C_0 T_0$,
\begin{equation}\label{l4-7-0}
C_{7}e^{-\lambda_1 t}H(x)H(y)\le p(t,x,y)\le C_{8}e^{-\lambda_1 t}H(x)H(y).
\end{equation}
\end{lemma}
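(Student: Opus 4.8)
The plan is to reduce the estimate, via the Chapman--Kolmogorov identity and the ground state bound \eqref{l4-6-2}, to a \emph{base case}: a two--sided estimate $p(t_*,x,y)\asymp H(x)H(y)$ at a single, suitably chosen time $t_*$, valid for all $x,y$. Fix $R_0>2$ and let $K>0$ be sufficiently large that Lemmas \ref{L:3.1} and \ref{L:3.2} apply with $K$ in place of their constant; set $t_*:=Kh(R_0)$, and note $t_*<Kh(2)=KT_0$ since $h$ is strictly decreasing. In the statement of the lemma one then takes $C_0:=t_*/T_0$, so that $t>C_0T_0$ is the same as $t>t_*$.

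For the base case, suppose first $|x|>R_0$; then $Kh(|x|)<Kh(R_0)=t_*\le KT_0$, so Lemmas \ref{L:3.1} and \ref{L:3.2} (with constant $K$) give
\[
p(t_*,x,y)\ge c_1 H(x)H(y)\int_{\{|z|\le s_0(t_*)\}}e^{-c_2 t_*g(|z|)}\,dz,\qquad p(t_*,x,y)\le c_3 H(x)H(y)\int_{\{|z|\le s_0(C_5 t_*)\}}e^{-c_4 t_*g(|z|)}\,dz .
\]
Because $t_*$ is a fixed positive number (so $s_0(t_*),s_0(C_5t_*)<\infty$), the first integral is $\ge\int_{\{|z|\le 2\}}e^{-c_2 t_*g(2)}\,dz>0$ and the second is $\le |B(0,s_0(C_5t_*))|<\infty$; hence $p(t_*,x,y)\asymp H(x)H(y)$. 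Now suppose $|x|\le R_0$; by \eqref{e4-1}, $t_0(|x|)\ge C_*^{-1}h(|x|)\ge C_*^{-1}h(R_0)$, so $t_*\le C_0't_0(|x|)$ once $C_0'\ge KC_*$, and Proposition \ref{P:2.4} applies. Since $H(x)\asymp 1$ on $\{|x|\le R_0\}$, with $t_*$ fixed one has $e^{-c\,t_*g(|x|)}\asymp 1$, $\tfrac{1}{t_*g(|y|)}\wedge 1\asymp \tfrac{1}{g(|y|)}$ (recall $g\ge 1$), and $\big(t_*^{-d/\alpha}\wedge \tfrac{t_*}{|x-y|^{d+\alpha}}\big)\asymp (1+|y|)^{-d-\alpha}$ (using $1+|x-y|\asymp 1+|y|$), so \eqref{p3-1-1} yields $p(t_*,x,y)\asymp \tfrac{1}{g(|y|)(1+|y|)^{d+\alpha}}=H(y)\asymp H(x)H(y)$. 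Thus $p(t_*,x,y)\asymp H(x)H(y)$ for all $x,y$ with $|x|\le |y|$.

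To propagate, for $t>t_*$ I would write $p(t,x,y)=\int_{\R^d}p(t_*,x,z)\,p(t-t_*,z,y)\,dz$. Inserting the base--case bounds and using the symmetry of $p$,
\[
p(t,x,y)\asymp H(x)\int_{\R^d}H(z)\,p(t-t_*,y,z)\,dz=H(x)\,T_{t-t_*}^V H(y) ,
\]
and by \eqref{l4-6-2}, $T_{t-t_*}^V H(y)\asymp e^{-\lambda_1(t-t_*)}H(y)\asymp e^{-\lambda_1 t}H(y)$ (the last step because $t_*$ is fixed). Hence $p(t,x,y)\asymp e^{-\lambda_1 t}H(x)H(y)$ for all $t>t_*=C_0T_0$, which is \eqref{l4-7-0}. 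The point of using a single intermediate time $t_*$ is that only one ``long'' factor $T_{t-t_*}^V H$ appears, so the argument covers \emph{every} $t>t_*$ without a gap.

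I expect the only real difficulty to be the bookkeeping in the base case: one must fix the ``sufficiently large'' constants ($R_0$, $K$, $C_0'$, and the implicit constants of Lemmas \ref{L:3.1}, \ref{L:3.2} and Proposition \ref{P:2.4}) consistently, and check that at the chosen fixed time $t_*$ the integral factors $\int_{\{|z|\le s_0(\cdot)\}}e^{-c\,t_*g(|z|)}\,dz$ are comparable to positive constants --- which is where one uses that $t_0(\cdot)$ is almost decreasing, so that $h$, $s_0$ and $t_*$ are well defined and finite. Everything past the base case is a direct consequence of Chapman--Kolmogorov together with \eqref{l4-6-2}.
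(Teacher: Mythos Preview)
Your proposal is correct and follows essentially the same approach as the paper: both establish a base--time estimate $p(t_*,z,u)\asymp H(z)H(u)$ by splitting into the cases $\min\{|z|,|u|\}$ large (where Lemmas \ref{L:3.1} and \ref{L:3.2} apply and the integrals $\int_{\{|w|\le s_0(\cdot)\}}e^{-c t_* g(|w|)}\,dw$ are finite positive constants) and $\min\{|z|,|u|\}$ small (where Proposition \ref{P:2.4} applies and the factors simplify), and then propagate via Chapman--Kolmogorov and \eqref{l4-6-2}. The paper takes $t_*=C_0T_0$ with threshold $2$, while you take $t_*=Kh(R_0)$ with threshold $R_0>2$; these are the same argument.
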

\begin{proof}
For every positive constants $c_1,c_2$,
$$
c_3\le \int_{\{|z|\le  2\}}e^{-c_1g(|z|)}\,dz\le \int_{\{|z|\le s_0(c_2)\}}e^{-c_1g(|z|)}\,dz\le  c_4.
$$
Then, according to \eqref{l4-1-1}, \eqref{l4-2-2} and Proposition \ref{P:2.4}, it holds that
\begin{equation}\label{l4-7-1}
c_5 H(z)H(u)\le  p( C_0 T_0,z,u)  \le c_{6}H(z)H(u),\quad z,u\in \R^d.
\end{equation} Indeed, when $h(\min\{|z|,|u|\}) < T_0$ and $C_0$ is large enough, \eqref{l4-7-1} follows from \eqref{l4-1-1} and \eqref{l4-2-2}  by taking $t = C_0 T_0$. On the other hand, if $h(\min\{|z|,|u|\}) \ge T_0$, then $\min\{|z|,|u|\} \le 2$,
and so $C_0T_0\le c_7t_0(\min\{|z|,|u|\})$. Thus, by Proposition \ref{P:2.4}, we have
\begin{align*}
p( C_0 T_0,z,u)\asymp &\left(\frac{1}{C_0T_0g(\max\{|u|,|z|\})}\wedge 1\right) e^{-c_{7}C_0T_0g(\min\{|u|,|z|\})}\left((C_0T_0)^{-d/\alpha}\wedge \frac{C_0 T_0}{|u-z|^{d+\alpha}}\right)\\
\asymp&\frac{1}{g(|u|)} \frac{1}{g(|z|)}
\frac{1}{(1+|u|)^{d+\alpha}} \frac{1}{(1+|z|)^{d+\alpha}}= H(u) H(z).
\end{align*}
Here the second step is due to the facts that for every $z,u\in \R^d$ with $\min\{|z|,|u|\}\le 2$,
\begin{align*}
&g(\max\{|z|,|u|\})\asymp g(|z|)g(|u|),\quad g(\min\{|z|,|u|\})\asymp 1,\\
&\left((C_0T_0)^{-d/\alpha}\wedge \frac{C_0 T_0}{|u-z|^{d+\alpha}}\right)
\asymp (1+|u|)^{-d-\alpha}(1+|z|)^{-d-\alpha}.
\end{align*}
This implies that \eqref{l4-7-1} is still true when $h(\min\{|z|,|u|\}) \ge T_0$.

Therefore, by \eqref{l4-6-2} and \eqref{l4-7-1}, for every $ t>C_0 T_0$,
\begin{align*}
p(t,x,y)&=\int_{\R^d}p(t-C_0 T_0,x,w)p(C_0T_0,w,y)\,dw\\
&\le c_{6}H(y)\int_{\R^d}p(t-C_0T_0,x,w)H(w)\,dw\\
&=c_{6}H(y)T_{t-C_0T_0}^VH(x)\le c_{8}e^{-\lambda_1 t}H(x)H(y),
\end{align*}
where in the last inequality we have used \eqref{l4-6-2}.

Applying \eqref{l4-6-2} and \eqref{l4-7-1} again, we derive that for every $t>C_0 T_0$,
\begin{align*}
p(t,x,y)&=\int_{\R^d}p(t-C_0T_0,x,w)p(C_0T_0,w,y)\,dw\\
&\ge c_{5}H(y)\int_{\R^d}p(t-C_0T_0,x,w)H(w)\,dw\\
&=c_{5}H(y)T_{t-C_0T_0}^VH(x)\ge c_{9}e^{-\lambda_1 t}H(x)H(y).
\end{align*}
Then, we prove \eqref{l4-7-0}.
\end{proof}

\subsection{$t_0(\cdot)$ is almost increasing}
In this subsection, we will assume that $t_0(\cdot)$ is almost increasing.
In particular,
$
c_{t_0}:=\inf_{z\in \R^d}t_0(|z|)>0.
$
Set
$$
s_1(t):=\inf\{s>0: h(s)\ge t\}\vee 2,\quad\ t>0,
$$
where $h(\cdot)$ is a strictly increasing and continuous function given in \eqref{e4-1} and we use the convention that
$\inf \emptyset =-\infty$.  Recall that $T_0=h(2)>0$ and $T_{\infty}=h(\infty)=\lim_{s \to \infty} h(s) \in (0,\infty]$.  In this part, we are looking for estimates of $p(t,x,y)$ when $t>C_0t_0(|x|)$. Thus, by taking $C_0$ large enough, we only need to consider the case that $t>T_0$.
In particular, in this case we have $s_1(t)= \inf\{s>0: h(s)\ge t\}$.

\begin{lemma}\label{l4-4}
There exist constants $C_0,C_{9}>0$ such that for every $z,u\in \R^d$, $t>0$ with $t\ge C_0t_0(|z|)$,
\begin{equation}\label{l4-4-1}
p(t,z,u)\le C_{9} H(z)H(u).
\end{equation}
\end{lemma}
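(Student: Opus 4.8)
The plan is to run the iterative Chapman--Kolmogorov argument from the proof of Lemma \ref{L:3.2}, adapted to the almost increasing case, where the conclusion simplifies because $t_0(\cdot)$ is now bounded below: with $c_{t_0}:=\inf_{z\in\R^d}t_0(|z|)>0$, the hypothesis $t\ge C_0t_0(|z|)$ forces $t\ge C_0c_{t_0}$, so every integral $\int_{\{|w|\le R\}}e^{-ctg(|w|)}\,dw$ is bounded by the fixed finite number $\int_{\R^d}e^{-cC_0c_{t_0}g(|w|)}\,dw$ (finite since $g(r)\to\infty$), and hence no ``integral term'' survives. One may assume $|u|$ is large: when $|u|$ is bounded then $H(u)\asymp1$, and \eqref{l4-5-1} already gives $p(t,z,u)\le C_1H(z)\le cH(z)H(u)$ once $t\ge C_0t_0(|z|)\ge N_1C_5t_0(|z|)$. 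Fix, for $C_0$ large, a threshold $\rho(t)\asymp s_1(t/\tilde C_0)$ so that $\{|w|\le\rho(t)\}$ is the ``big time'' set (there $t\gtrsim t_0(|w|)$, so \eqref{l4-2-3a-} and Lemma \ref{l4-5} apply) and $\{|w|>\rho(t)\}$ is the ``small time'' set (there $t\lesssim t_0(|w|)$, so \eqref{e:note2} applies); since $h$ is now increasing, the big time set is the neighbourhood of the origin, and $t\ge C_0t_0(|z|)$ implies $|z|\le\rho(t)$, which is what is needed for Lemma \ref{l4-5}.

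First I would record the base bound: from $p(2t,z,u)=\int p(t,z,w)p(t,w,u)\,dw\le C_1H(z)\,T_t^V1(u)$ together with \eqref{l4-2-3a-} and \eqref{e:note2},
\[
p(2t,z,u)\le c\,H(z)\big((1+|u|)^{-\alpha/2}\I_{\{|u|\le\rho(t)\}}+e^{-c'tg(|u|)}\I_{\{|u|>\rho(t)\}}\big),
\]
and for $|u|$ much larger than $\rho(t)$ I would instead invoke Lemma \ref{L:2.1}, which (for $|z|\le|u|/2$) yields the correct polynomial rate $p(t,z,u)\le C_8\big(g(|u|)|z-u|^{d+\alpha}\big)^{-1}\le c\,H(u)$; this is the crucial point where the almost increasing case differs from Lemma \ref{L:3.2}, since there the far points are small time and $T_t^V1$ only decays exponentially (which may be weaker than any polynomial when $h(\infty)=\infty$). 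Applying Lemma \ref{L:2.1} is legitimate because $\rho(t)\gtrsim t^{1/\alpha}$, a consequence of $t_0(s)\le C_2(1+\log(2+s))$ from Lemma \ref{dbl-polyn} (so $s_1(t)$ grows at least exponentially in $t$), the complementary range where $t$ is bounded being elementary. Then I iterate $p(2(m+1)t,z,u)=\int p(2mt,z,w)p(2t,w,u)\,dw$, splitting $\int=\int_{\{|w|\le\rho(t)\}}+\int_{\{\rho(t)<|w|\le|u|\}}+\int_{\{|w|>|u|\}}$ exactly as in \eqref{e:lf3}--\eqref{l4-2-6}: the first integral yields $H(z)H(u)\int_{\{|w|\le\rho(t)\}}e^{-ctg(|w|)}\,dw$, the middle one improves the exponent of $(1+|u|)$ by $\alpha$ through $\int_{\{|w|>2\}}(1+|w|)^{-\gamma}\,dw$ estimates, and the last one is controlled by the integrability of $(1+|w|)^{-\alpha/2}H(w)$ on $\{|w|>|u|\}$.

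After a fixed number $N$ of steps (with $N$ so large that the residual power of $(1+|u|)$ is negative), this gives $p(2Nt,z,u)\le cH(z)H(u)\int_{\{|w|\le\rho(t)\}}e^{-c''tg(|w|)}\,dw$; using $t\ge\tilde C_0t_0(|z|)\ge\tilde C_0c_{t_0}$ the integral is at most $\int_{\R^d}e^{-c''\tilde C_0c_{t_0}g(|w|)}\,dw=:c_*<\infty$, so $p(2Nt,z,u)\le cc_*H(z)H(u)$; reparametrising $s=2Nt$ proves \eqref{l4-4-1} with $C_0=2N\tilde C_0$ and $C_9=cc_*$. I expect the main obstacle to be the bookkeeping in the iteration when $u$ lies in the transition regime $t\asymp t_0(|u|)$ (equivalently $|u|\asymp\rho(t)$), where one must patch together the polynomial bound coming from \eqref{l4-2-3a-} and Lemma \ref{L:2.1} with the exponential bound coming from \eqref{e:note2}, and verify throughout that the off-diagonal hypothesis $|z-u|\ge2C_0't^{1/\alpha}$ required to apply Lemma \ref{L:2.1} is actually satisfied — this is precisely where the quantitative comparison between $\rho(t)$ and $t^{1/\alpha}$ furnished by Lemma \ref{dbl-polyn} is essential.
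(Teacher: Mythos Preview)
Your high-level strategy is the paper's: fix a threshold $\tilde s_1(t)\asymp s_1(t/\tilde C_1)$, use Lemma~\ref{l4-5} and Corollary~\ref{c2-2} on the big-time region $\{|w|\le\tilde s_1(t)\}$, use Lemma~\ref{L:2.1} together with the short-time bound of Proposition~\ref{P:2.4} on $\{|w|>\tilde s_1(t)\}$, and iterate Chapman--Kolmogorov until the residual power of $(1+|u|)$ becomes negative. You also correctly observe that in the almost increasing case $t\ge C_0c_{t_0}$ is bounded below, so any exponential integral is uniformly bounded and no integral factor survives.

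However, the iteration you sketch is wrong in a way that matters. You assert that the decomposition runs ``exactly as in \eqref{e:lf3}--\eqref{l4-2-6}'', with the first integral over $\{|w|\le\rho(t)\}$ producing $H(z)H(u)\int_{\{|w|\le\rho(t)\}}e^{-ctg(|w|)}\,dw$. That cannot be right: in Lemma~\ref{L:3.2} the inner region $\{|w|\le\tilde s_0(t)\}$ is the \emph{small-time} set, which is why the exponential factor $e^{-ctg(|w|)}$ lives there. Here the roles are reversed: $\{|w|\le\rho(t)\}$ is the \emph{big-time} set, and the bounds available there are polynomial ($(1+|w|)^{-\alpha/2}$ or $H(w)$), not exponential. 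So the template of Lemma~\ref{L:3.2} does not transfer verbatim. The paper's iteration (its Steps~2--3) uses a different split, namely $\{|w|\le|u|\}\cup\{|u|\le|w|\le 6\tilde s_1(t)\}\cup\{|w|>6\tilde s_1(t)\}$ when $|z|,|u|\le\tilde s_1(t)$, and no exponential integral over the inner region ever appears; each step gains a factor $(1+|u|)^{-\alpha}$ by integrating powers of $(1+|w|)^{-1}$ against $H(w)$.

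The second gap is the case $|u|>\tilde s_1(t)$. Invoking Lemma~\ref{L:2.1} directly on $p(t,z,u)$ yields at best one factor $H(u)$ and says nothing about how to couple it with the $H(z)$ coming from the iteration; your base bound $p(2t,z,u)\le cH(z)e^{-c'tg(|u|)}$ for $|u|>\rho(t)$ is in general far weaker than $H(z)H(u)$ when $h(\infty)=\infty$. The paper handles this by writing $p(6t,z,u)=\int p(4t,z,w)p(2t,w,u)\,dw$, splitting at $\tilde s_1(t)$, and on the outer piece $\{|w|>\tilde s_1(t)\}$ using the \emph{full} short-time estimate \eqref{p3-1-1} for $p(2t,w,u)$ (both arguments large), then integrating it against $H(w)$ over three annuli around $u$ (the $K_{21}+K_{22}+K_{23}$ computation) to extract the factor $H(u)$. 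This is where most of the analytic work lies, and your plan does not account for it beyond flagging it as the ``main obstacle''.
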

\begin{proof}
We first note that when $t_0(\cdot)$ is a bounded function, i.e.,\ $T_{\infty}=h(\infty) < \infty$,
$t_0(\cdot)$ is almost increasing and almost decreasing at the same time. Indeed, we can simply modify the profile $h(\cdot)$ to be a continuous and decreasing function and keep the value $h(2)$ at the cost of comparability constants in \eqref{e4-1}. In this case, the estimate \eqref{l4-4-1} follows directly from the
upper bound estimate
in Lemma \ref{l4-7}.
Therefore, for the rest of the proof we assume that $t_0(\cdot)$ is an almost increasing function and the corresponding profile $h$
satisfies that
$T_{\infty} = \infty$.

According to \eqref{e4-1},
\eqref{l4-2-3a-} and \eqref{l4-5-1},
there is a constant $\tilde C_0>0$ so that
\begin{equation}\label{l4-4-3}
T_{t}^V1(z)\le c_1(1+|z|)^{-\alpha/2},\quad z\in \R^d,\ t>0\ {\rm with}\ t\ge\tilde C_0h(|z|),
\end{equation} and
\begin{equation}\label{l4-4-4}
p(t,z,u)\le c_2H(z),\quad u,z\in \R^d,\ t>0\ {\rm with}\ t\ge \tilde C_0h(|z|).
\end{equation}
Without loss of generality  in the following we can assume that $\tilde C_0>T_0/c_{t_0}=h(2)/c_{t_0}$.
On the other hand, by \eqref{e4-1}, the definition of $s_1(\cdot)$ and the fact that $h(\cdot)$ is
continuous and strictly increasing,
we know that there exists  $\tilde C_1>0$ large enough such that for all $t\ge \tilde C_1 T_0 $,
\begin{equation}\label{l4-4-2}
\begin{split}
t\ge \tilde C_1h(|z|/6)\ge \tilde C_0h(|z|),\ &\quad z\in \R^d\,\, {\rm with}\ |z|\le 6s_1\left(\frac{t}{\tilde C_1}\right)=:6\tilde s_1(t),\\
t\le \tilde C_1h(|z|/6)\le \tilde C_1h(|z|),\ &\quad z\in \R^d\,\, {\rm with}\ |z|>6s_1\left(\frac{t}{\tilde C_1}\right)=:6\tilde s_1(t).
\end{split}
\end{equation}
The proof below is split into
three steps.

\noindent
{\bf Step 1}:\
According to \eqref{l4-4-3}, \eqref{l4-4-4} and \eqref{l4-4-2}, we obtain that for all $z,u\in \R^d$ and $t>\tilde C_1 T_0$ with
$|z|\le \tilde s_1(t)$ and $|u|\le 6\tilde s_1(t)$,
\begin{equation}\label{e:ppffoo}
 p(2t,z,u)   =\int_{\R^d} p(t,z,w)p(t,w,u)\,dw   \le c_{3}H(z)T_{t}^V1(u) \le c_{4}H(z)(1+|u|)^{-\alpha/2}.
\end{equation}
When $ t>\tilde C_1 T_0$, $|z|\le \tilde s_1(t)$ and $|u|>\tilde s_1(t)$ with $|u|>6|z|$, set
\begin{align*}
p(2t,z,u)&=\left(\int_{\{|z-w|\le {|u|}/{3}\}}+\int_{\{|z-w|>{|u|}/{3}\}}\right)
p(t,z,w)p(t,w,u)\,dw=:I_1+I_2.
\end{align*}
If $|z-w|\le {|u|}/{3}$, then $|u-w|\ge |u-z|-|z-w|\ge |u|-|z|-|u|/3\ge |u|/2\ge c_{5}t^{1/\alpha}$,
where in the last inequality we used the fact that for $t\ge \tilde C_1T_0$,
$$|u|\ge \tilde s_1(t)\ge e^{c_{6}(1+t)}\ge c_7t^{1/\alpha},$$
thanks to the fact that $h(s)\le c_{8}\log(2+s)$
(which can be verified by \eqref{l4-2-1b} directly).
By \eqref{l2-5-1} and \eqref{l4-4-3}, we get
\begin{align*}
I_1&\le \int_{\{|z-w|\le {|u|}/{3}\}}\left(\frac{c_{9}}{t\max\{g(|u|),g(|w|)\}}\wedge 1\right)\frac{t}{|u-w|^{d+\alpha}}
p(t,z,w)\,dw\\
&\le \frac{c_{10}}{g(|u|)(1+|u|)^{d+\alpha}}T^V_{t}1(z)\le
c_{11}H(u)
(1+|z|)^{-\alpha/2}.
\end{align*}
Similarly, using \eqref{l2-5-1} again we have
\begin{align*}
I_2&\le \int_{\{|z-w|> {|u|}/{3}\}}\left(\frac{c_{12}}{t\max\{g(|z|),g(|w|)\}}\wedge 1\right)\frac{t}{|z-w|^{d+\alpha}}
p(t,w,u)\,dw\\
&\le \frac{c_{13}}{g(|u|)(1+|u|)^{d+\alpha}}T^V_{t}1(u) \\
&\le \frac{c_{14}}{g(|u|)(1+|u|)^{d+\alpha}}\left(e^{-c_{15}tg(|u|)}+\frac{t}{(1+|u|)^{\alpha}}\right)\\
&\le c_{14}H(u)\left(e^{-c_{15}tg(|z|)}+\frac{t}{(1+|z|)^{\alpha}}\right)\le c_{16} H(u)(1+|z|)^{-\alpha/2},
\end{align*}
where the second inequality follows from the fact
that $g(|w|)\ge g(|z-w|-|z|)\ge g(|u|/6)\ge c_{17}g(|u|)$ for all
$z,w\in \R^d$ with $|z-w|>{|u|}/{3}$ and $|z|\le |u|/6$,
the third inequality is due to \eqref{est1},
the fourth inequality follows from the fact $|u|\ge 6|z|$, and in the
last inequality we have used the arguments from the proof of
\eqref{c2-2-2}
(by taking $\tilde C_1$ larger if necessary).
Hence, by all above estimates, for every $|z|\le \tilde s_1(t)/6$,
\begin{equation}\label{l4-4-6}
\begin{split}
p(2t,z,u)&\le
\begin{cases}
c_{18}H(z)(1+|u|)^{-\alpha/2},\ &|u|\le \tilde s_1(t),\\
c_{18}H(u)(1+|z|)^{-\alpha/2},\ &|u|>\tilde s_1(t),
\end{cases}
\\
&=
\begin{cases}
c_{18}H(z)H(u)(1+|u|)^{d+\alpha/2}g(|u|),\ &|u|\le \tilde s_1(t),\\
c_{18}H(z)H(u)(1+|z|)^{d+\alpha/2}g(|z|),\ &|u|>\tilde s_1(t);
\end{cases}
\end{split}
\end{equation} and for every $\tilde s_1(t)/6 \le |z|\le \tilde s_1(t)$,
\begin{equation}\label{l4-4-6---}
\begin{split}
p(2t,z,u)&\le
\begin{cases}
c_{19}H(z)H(u)(1+|u|)^{d+\alpha/2}g(|u|),\ &|u|\le 6\tilde s_1(t),\\
c_{19}H(z)H(u)(1+|z|)^{d+\alpha/2}g(|z|),\ &|u|>6\tilde s_1(t),
\end{cases}
\\
&\le
\begin{cases}
c_{20}H(z)H(u)(1+|u|)^{d+\alpha/2}g(|u|),\ &|u|\le \tilde s_1(t),\\
c_{20}H(z)H(u)(1+|z|)^{d+\alpha/2}g(|z|),\ &|u|>\tilde s_1(t),
\end{cases}
\end{split}
\end{equation}
where the last inequality we have used the fact that $|u|\asymp |z|$ and
$H(u)\asymp H(z)$ for every $\tilde s_1(t)/6 \le |z|\le \tilde s_1(t)$ and $\tilde s_1(t) \le |u|\le 6\tilde s_1(t)$.
Then, we show that the inequality \eqref{l4-4-6} holds for every $|z|\le \tilde s_1(t)$.

Below we assume that $t>\tilde C_1T_0$ and consider the case that $|z|>\tilde s_1(t)$.
According to \eqref{l4-4-6} (by noting that \eqref{l4-4-6} has been verified for every $|z|\le \tilde s_1(t)$ and exchanging the position of
$z$ and $u$),
\begin{align}\label{l4-4-6--}
p(2t,z,u)\le c_{21}H(z)(1+|u|)^{-\alpha/2},\quad |z|>\tilde s_1(t),\ |u|\le \tilde s_1(t).
\end{align}
When $|z|>\tilde s_1(t)$ and $|u|>\tilde s_1(t)$, by \eqref{l4-4-2} and its proof, there is a constant $c_{22}>0$ such that
$t\le c_{22}\min\{t_0(|z|,t_0(|u|)\}$. Thus, it follows from \eqref{p3-1-1} that
\begin{align*}
p(2t,z,u)\le c_{23}\left(\frac{1}{t\max\{g(|z|),g(|u|)\}}\wedge 1\right)\left(t^{-d/\alpha}\wedge \frac{t}{|z-u|^{d+\alpha}}\right)e^{-c_{21}t\min\{g(|z|),g(|u|)\}}.
\end{align*}
Hence, for every $t>\tilde C_1T_0$ and  $|z|>\tilde s_1(t)$,
\begin{equation}\label{l4-4-7}\begin{split}
& p(2t,z,u)  \\ & \le
\begin{cases}
c_{21}H(z)(1+|u|)^{-\alpha/2},\ &|u|\le \tilde s_1(t),\\
c_{23}e^{-c_{21}t\min\{g(|z|),g(|u|)\}}\left(\frac{1}{t\max\{g(|z|),g(|u|)\}}\wedge 1\right)\left(t^{-d/\alpha}\wedge \frac{t}{|z-u|^{d+\alpha}}\right),\ &|u|>\tilde s_1(t).
\end{cases}
\end{split}\end{equation}

{\bf Step 2}:\ In what follows, we will make use of \eqref{l4-4-6} and \eqref{l4-4-7} to obtain some
improved estimates for $p(t,z,u)$, which will be applied to the iteration procedure later on. We always assume that $t>\tilde C_1T_0$.

We first assume that $d\neq \alpha$.
For every $|z|\le \tilde s_1(t)$ and $|u|\le \tilde s_1(t)$, let
\begin{align*}
p(4t,z,u)
&=\left(\int_{\{|w|\le |u|\}}+\int_{\{|u|\le |w|\le 6\tilde s_1(t)\}}
\int_{\{|w|>6\tilde s_1(t)\}}\right)p(2t,z,w)p(2t,w,u)\,dw\\
&=:J_1+J_2+J_3.
\end{align*}
By \eqref{l4-4-6} (which has been proven for every $|z|\le \tilde s_1(t)$),
\begin{align*}
J_1 &\le c_{24} H(z)H(u)\int_{\{|w|\le |u|\}}H^2(w)(1+|w|)^{2(d+\alpha/2)}g^2(|w|)\,dw\\
&\le c_{25}H(z)H(u)\max\{(1+|u|)^{d-\alpha},1\}.
\end{align*} According to \eqref{e:ppffoo} and  the proof of \eqref{l4-4-6},
\begin{align*}
J_2 & \le c_{26}H(z)(1+|u|)^{-\alpha/2}\int_{\{|u|\le |w|\le 6\tilde s_1(t)\}}(1+|w|)^{-\alpha/2}H(w)\,dw \\ & \le
c_{27}H(z)H(u)(1+|u|)^{d-\alpha}.
\end{align*}
On the other hand, $|w|\ge 6\max\{|z|,|u|\}$ if $|w|>6\tilde s_1(t)$. Applying
\eqref{l4-4-6} again, we derive
\begin{align*}
J_3 &\le c_{28}(1+|z|)^{-\alpha/2}(1+|u|)^{-\alpha/2}\int_{\{|w|>6\max\{|z|,|u|\}\}}H^2(w)\,dw \\
&
\le  c_{28}(1+|z|)^{-\alpha/2}(1+|u|)^{-\alpha/2}  (g(|z|)g(|u|))^{-1}  \int_{\{|w|>6\max\{|z|,|u|\}\}}(1+|w|)^{-2(d+\alpha)}\,dw\\
& \le c_{29} H(z)H(u)(1+|u|)^{d-\alpha}.
\end{align*}
Hence, combining all above estimates together yields that
\begin{equation}\label{l4-4-8}
p(4t,z,u)\le c_{30}H(z)H(u)\max\{(1+|u|)^{d-\alpha},1\},\quad t> \tilde C_1T_0,
|z|\le \tilde s_1(t), |u|\le \tilde s_1(t).
\end{equation}
When $d= \alpha$, we can use the following estimate instead of \eqref{l4-4-3}:
$$T_{t}^V1(z)\le c_{31}(1+|z|)^{-\alpha}\log(1+|z|),\quad z\in \R^d,\ t>0\ {\rm with}\ t\ge\tilde C_0h(|z|),$$ thanks to \eqref{l4-2-3a-}. Then, by the same arguments as above, we can prove that \eqref{l4-4-8} still holds true.

On the other hand, for every $|z|\le \tilde s_1(t)$ and $|u|> \tilde s_1(t)$, set
\begin{align*}
p(6t,z,u)&=\left(\int_{\{|w|\le \tilde s_1(t)\}}+
\int_{\{|w|>\tilde s_1(t)\}}\right)
p(4t,z,w)p(2t,w,u)\,dw\\
&=:K_1+K_2.
\end{align*}
Applying  \eqref{l4-4-8} and \eqref{l4-4-6--} to  $p(4t,z,w)$ and $p(2t,w,u)$ respectively, we have
\begin{align*}
K_1&\le c_{32}H(z)H(u)\max\{(1+|z|)^{d-\alpha},1\}\int_{\{|w|\le \tilde s_1(t)\}}(1+|w|)^{-\alpha/2}H(w)\,dw\\
&\le c_{33}H(z)H(u)\max\{(1+|z|)^{d-\alpha},1\}.
\end{align*}
Meanwhile, by
\eqref{l4-4-2},
\eqref{l4-4-7} and
both of the second inequalities in \eqref{l4-4-6} and \eqref{l4-4-6---}, we have
\begin{align*}
K_2&\le \frac{c_{34}}{(1+|z|)^{\alpha/2}}\int_{\{|w|>\tilde s_1(t)\}}
\frac{e^{-c_{35}t\min\{g(|w|),g(|u|)\}}H(w)}{t\max\{g(|w|),g(|u|)\}}\left(t^{-d/\alpha}\wedge \frac{t}{|w-u|^{d+\alpha}}\right)\,dw \\
&\le \frac{c_{34}e^{-c_{35}tg(|z|)}}{g(|u|)(1+|z|)^{\alpha/2}}\int_{\{|w|>\tilde s_1(t)\}}H(w)t^{-1}\left(t^{-d/\alpha}\wedge \frac{t}{|w-u|^{d+\alpha}}\right)\,dw \\
&=\frac{c_{34}e^{-c_{35}tg(|z|)}}{g(|u|)(1+|z|)^{\alpha/2}}\cdot\left[\sum_{i=1}^3\int_{\{|w|>\tilde s_1(t)\}\cap A_i}H(w)t^{-1}\left(t^{-d/\alpha}\wedge \frac{t}{|w-u|^{d+\alpha}}\right)\,dw\right]\\
&=:\frac{c_{34}e^{-c_{35}tg(|z|)}}{g(|u|)(1+|z|)^{\alpha/2}}\left( K_{21}+K_{22}+K_{23}\right),
\end{align*}
where $A_1:=\{w\in \R^d: |w-u|\le c_{36}t^{1/\alpha}\}$, $A_2:=\{w\in \R^d: c_{36}t^{1/\alpha}<|w-u|\le |u|/2\}$ and
$A_3:=\{w\in \R^d: |w-u|>|u|/2\}$. Here we used again the fact that
$$|u|\ge \tilde s_1(t)\ge e^{c_{37}(1+t)}\ge 8c_{36}t^{1/\alpha},$$
which can be verified by the property that $h(s)\le c_{38}\log(2+s)$.

By \eqref{l4-4-2}, we know that $t\ge \tilde C_0h(|z|)$ when $|z|\le \tilde s_1(t)$.
Increasing the constant  $\tilde C_0$  if necessary and following the argument for \eqref{c2-2-2}, we obtain
\begin{equation}\label{l4-4-7a}
\begin{split}
e^{-c_{35}tg(|z|)}&\le e^{-c_{35}\tilde C_0h_0(|z|)g(|z|)}\le
c_{39}g(|z|)^{-1}(1+|z|)^{-d-\alpha/2},\quad |z|\le \tilde s_1(t),
\end{split}
\end{equation}
where in the last step we have used the lower bound of $g(\cdot)$ in \eqref{l4-2-1b}.
Furthermore, by direct computations we can find that
$$
K_{21}\le  \sup_{w\in A_1}H(w)\cdot\left(\int_{\{|w-u|\le c_{36}t^{1/\alpha}\}}t^{-d/\alpha-1}\,dw\right)\le c_{40}H(u),
$$
$$
K_{22}\le  \sup_{w\in A_2}H(w)\cdot\left(\int_{\{c_{36}t^{1/\alpha}<|w-u|\le |u|/2\}}\frac{1}{|w-u|^{d+\alpha}}\,dw\right)\le c_{40}H(u),
$$ and
$$
K_{23}\le \frac{c_{40}}{(1+|u|)^{d+\alpha}}\int_{\{|w-u|>|u|/2\}}H(w)\,dw\le
c_{41}H(u).
$$
Putting all the estimates above together yields
\begin{equation*}\label{l4-4-9a}
K_2\le c_{42}H(z)H(u).
\end{equation*}
By combining this bound with the estimate of $K_1$ above,  we obtain that
\begin{equation*}
p(6t,z,u) \le c_{43} H(z)H(u)\max\{(1+|z|)^{d-\alpha},1\},\quad |z|\le \tilde s_1(t), |u|> \tilde s_1(t).
\end{equation*}
This, along with \eqref{l4-4-8}
(for the case that $|z|\le \tilde s_1(t)$ and $|u|\le \tilde s_1(t)$),
 yields that for every $t>\tilde C_1T_0$ and $|z|\le \tilde s_1(t)$,
\begin{equation}\label{l4-4-9}
\begin{split}
p(6t,z,u)&\le
\begin{cases}
c_{44}H(z)H(u)\max\{(1+|u|)^{d-\alpha},1\},\ &|u|\le \tilde s_1(t),\\
c_{44}H(z)H(u)\max\{(1+|z|)^{d-\alpha},1\},\ &|u|>\tilde s_1(t).
\end{cases}
\end{split}
\end{equation} Indeed, by \eqref{l4-4-8},
$$
 p(6t,z,u) =p\left(4\times \frac{3t}{2},z,u\right)\le c_{44}H(z)H(u)\max\{(1+|u|)^{d-\alpha},1\}
$$ when $ |u|\le \tilde s_1(3t/2)$ and $|z|\le \tilde s_1(3t/2).$
According to this and the fact that $\tilde s_1(3t/2)\ge \tilde s_1(t)$, we obtain the desired assertion in \eqref{l4-4-9}.

{\bf Step 3:}\ Now we will apply the iteration procedure and use the improved estimate \eqref{l4-4-9}. In fact, for every $t>\tilde C_1T_0$, $|z|\le \tilde s_1(t)$ and $|u|\le \tilde s_1(t)$, we set
\begin{align*}
p(8t,z,u)
&=\left(\int_{\{|w|\le |u|\}}+\int_{\{|u|\le |w|\le 6\tilde s_1(t)\}}
\int_{\{|w|>6\tilde s_1(t)\}}\right)p(6t,z,w)p(2t,w,u)\,dw\\
&=:L_1+L_2+L_3.
\end{align*}
Using \eqref{l4-4-6}, \eqref{l4-4-6---} and \eqref{l4-4-9} (for the estimate of $p(6t,z,w)$),
we obtain
\begin{align*}
L_1&\le c_{45}H(z)H(u)\int_{\{|w|\le |u|\}}H^2(w)g(|w|)\max\{(1+|w|)^{2d-\alpha/2},(1+|w|)^{d+\alpha/2}\}\,dw\\
&\le c_{46}H(z)H(u)\max\{(1+|u|)^{d-2\alpha},1\},
\end{align*}
\begin{align*}
L_2&\le c_{47}H(z)(1+|u|)^{-\alpha/2}\int_{\{|u|\le |w|\le 6\tilde s_1(t)\}}H^2(w)\max\{(1+|w|)^{d-\alpha},1\}\,dw\\
&\le c_{48}H(z)H(u)(1+|u|)^{d-2\alpha}
\end{align*} and
\begin{align*}
L_3&
\le c_{49}H(z)\max\{(1+|z|)^{d-\alpha},1\}(1+|u|)^{-\alpha/2}\int_{\{|w|>\max\{|z|,|u|\}\}}H^2(w)\,dw\\
&\le c_{50}H(z)H(u)(1+|u|)^{d-2\alpha}.
\end{align*}
Combining with all above estimates yields that
\begin{equation}\label{l4-4-10}
p(8t,z,u)\le c_{51}H(z)H(u)\max\{(1+|u|)^{d-2\alpha},1\},\quad
|z|\le \tilde s_1(t),\ |u|\le \tilde s_1(t).
\end{equation}

On the other hand, when $|z|\le \tilde s_1(t)$ and $|u|>\tilde s_1(t)$,
\begin{align*}
p(10t,z,u)&=\left(\int_{\{|w|\le \tilde s_1(t)\}}+
\int_{\{|w|>\tilde s_1(t)\}}\right)p(8t,z,w)p(2t,w,u)\,dw\\
&=:M_1+M_2.
\end{align*}
According to \eqref{l4-4-10} for  $p(8t,z,w)$ and \eqref{l4-4-6} for $p(2t,w,u)$ respectively, we can verify directly that
\begin{align*}
M_1&\le c_{52}H(z)H(u)\max\{(1+|z|)^{d-2\alpha},1\}\int_{\{|w|\le \tilde s_1(t)\}}(1+|w|)^{-\alpha/2}H(w)\,dw\\
&\le c_{53}H(z)H(u)\max\{(1+|z|)^{d-2\alpha},1\}.
\end{align*}
Applying the arguments for the estimate of $K_2$ in {\bf Step 2} (by choosing $\tilde C_0$ large enough if
necessary), we can derive
\begin{align*}
M_2&\le c_{54}H(z)H(u).
\end{align*}
Thus, for all $|z|\le \tilde s_1(t)$ and $|u|>\tilde s_1(t)$,
\begin{align*}
p(10t,z,u)\le c_{55}H(z)H(u)\max\{(1+|z|)^{d-2\alpha},1\}.
\end{align*}
This, along with (the proof of) \eqref{l4-4-10}, yields that
for every $t>\tilde C_1T_0$ and $|z|\le \tilde s_1(t)$,
\begin{equation}\label{l4-4-11}
\begin{split}
p(10t,z,u)&\le
\begin{cases}
c_{56}H(z)H(u)\max\{(1+|u|)^{d-2\alpha},1\},\ &|u|\le \tilde s_1(t),\\
c_{56}H(z)H(u)\max\{(1+|z|)^{d-2\alpha},1\},\ &|u|>\tilde s_1(t).
\end{cases}
\end{split}
\end{equation}

Furthermore, by using the improved estimate \eqref{l4-4-11} and iterating the same arguments as above for $N_0$ times
with $N_0\alpha>d$, we can find that
$$
p\left((6+4N_0)t,z,u\right)\le c_{57}H(z)H(u),\quad z,u\in \R^d,\ t>\tilde C_1T_0  \ {\rm with}\ |z|\le \tilde s_1(t).
$$
We can now combine this estimate with the fact that the inequalities
$t \ge N_1C_*\tilde C_1 t_0(|x|) \ge N_1\tilde C_1 h(|x|)$
(with $N_1$ large enough to ensure $N_1h(0)>T_0$) imply that $|x|\le \tilde s_1(t)$.
This shows that the desired bound \eqref{l4-4-1} holds with
$C_0=N_1C_*\tilde C_1(6+4N_0)$
since $N_1$ can be chosen large enough so that $C_0 t_0(|x|)>\tilde C_1T_0$.
\end{proof}

Next, we can further improve the upper estimate for $p(t,x,y)$ when $t>C_0t_0(|x|)$.

\begin{lemma}\label{l4-6}
There exist constants $C_0,C_{10}>0$ such that for every $t>C_0t_0(|x|)$,
\begin{equation}\label{l4-6-0}
p(t,x,y)\le C_{10}e^{-\lambda_1 t}H(x)H(y).
\end{equation}
\end{lemma}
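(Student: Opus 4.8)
The plan is to imitate the proof of Lemma~\ref{l4-7}, but to replace the \emph{uniform} reference time $C_0T_0$ used there by a \emph{position-dependent} one of order $t_0(|x|)$. Exactly as in Lemma~\ref{l4-4}, if $t_0(\cdot)$ is bounded (i.e.\ $T_\infty=h(\infty)<\infty$) then $t_0(\cdot)$ is also almost decreasing and \eqref{l4-6-0} is already contained in Lemma~\ref{l4-7}; so we may assume $T_\infty=\infty$ and, enlarging $C_0$ freely, that $t>C_0t_0(|x|)\ge C_0c_{t_0}$ is as large as we please. Recall $|x|\le|y|$, fix a large radius $R_0$ (specified below), and set $K_0:=\sup_{|z|\le R_0}t_0(|z|)<\infty$. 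First I would dispose of the bounded region $|x|\le R_0$: putting $s_x:=C_0t_0(|x|)\in[C_0c_{t_0},C_0K_0]$, the fact $t>s_x$ gives
\[
p(t,x,y)=\int_{\Rd}p(s_x,x,w)\,p(t-s_x,w,y)\,dw\le C_9\,H(x)\int_{\Rd}H(w)\,p(t-s_x,w,y)\,dw=C_9\,H(x)\,T_{t-s_x}^{V}H(y),
\]
where Lemma~\ref{l4-4} was applied to $p(s_x,x,\cdot)$ (legitimate as $s_x=C_0t_0(|x|)$, after possibly enlarging the generic $C_0$). By \eqref{l4-6-2} this is $\le c\,e^{-\lambda_1(t-s_x)}H(x)H(y)\le c\,e^{\lambda_1C_0K_0}\,e^{-\lambda_1t}H(x)H(y)$, and $e^{\lambda_1C_0K_0}$ is a genuine constant because $C_0K_0$ is; hence \eqref{l4-6-0} holds for $|x|\le R_0$.

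Now let $|x|>R_0$ (so also $|y|>R_0$ and $t$ is large). Write $D:=B(0,R_0)^c$, let $\sigma:=\tau_D$ be the first hitting time of $\bar B(0,R_0)$, and decompose, by the strong Markov property and the Feynman--Kac formula,
\[
p(t,x,y)=p^{D}(t,x,y)+\Ee_x\Big[\I_{\{\sigma\le t\}}\,e^{-\int_0^\sigma V(X_s)\,ds}\,p(t-\sigma,X_\sigma,y)\Big]=:P_1+P_2,
\]
$p^{D}$ being the Feynman--Kac kernel of $X$ killed on leaving $D$.

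For $P_1$ I would shift the potential by the ground-state energy. Choosing $R_0$ so large that $\inf_{z\in D}V(z)\ge\tfrac32\lambda_1$, set $\hat V:=(V-\lambda_1)\vee\tfrac12\lambda_1$, which is non-negative, locally bounded, tends to $\infty$, and satisfies Assumption~\textbf{(H)} with the same profile $g$ (hence the same $t_0$). On $\{\tau_D>t\}$ one has $\hat V(X_s)=V(X_s)-\lambda_1$ for $s\le t$, whence $p^{D}(t,x,y)=e^{-\lambda_1t}\,\Ee_x[e^{-\int_0^t\hat V(X_s)\,ds}\I_{\{\tau_D>t\}};X_t=y]\le e^{-\lambda_1t}\,\hat p(t,x,y)$, where $\hat p$ is the Feynman--Kac kernel of $\mathcal L-\hat V$ on $\Rd$. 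Since Lemma~\ref{l4-4} holds verbatim for every potential satisfying~\textbf{(H)} with profile $g$, we get $\hat p(t,x,y)\le\hat C_9H(x)H(y)$ for $t\ge\hat C_0t_0(|x|)$, and therefore $P_1\le\hat C_9\,e^{-\lambda_1t}H(x)H(y)$.

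It remains to handle $P_2$, and this is the heart of the matter. On $\{\sigma\le t\}$ we have $|X_\sigma|\le R_0<|y|$; invoking the already-proved case $|x|\le R_0$ when $t-\sigma\ge C_0t_0(|X_\sigma|)$, and Proposition~\ref{P:2.4} together with crude bounds and $\inf_{|z|\le R_0}H(z)>0$ when $t-\sigma\le C_0t_0(|X_\sigma|)\le C_0K_0$, one obtains in both cases $p(t-\sigma,X_\sigma,y)\le c_{R_0}\,e^{-\lambda_1(t-\sigma)}H(X_\sigma)H(y)$ with $c_{R_0}$ depending only on $R_0$. Consequently
\[
P_2\le c'_{R_0}\,e^{-\lambda_1t}\,H(y)\,\Ee_x\Big[e^{-\int_0^\sigma(V(X_s)-\lambda_1)\,ds}\,;\,\sigma<\infty\Big],
\]
so the proof reduces to the estimate $\Ee_x[e^{-\int_0^\sigma(V(X_s)-\lambda_1)\,ds};\sigma<\infty]\le c''H(x)$ for $|x|>R_0$. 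I would prove this by the L\'evy-system/strong-Markov iteration used in Lemma~\ref{L:2.1}: for $|x|\ge2R_0$ the process must first exit $W:=B(x,|x|/2)$, on which $V-\lambda_1\ge c\,g(|x|)$; conditioning on $(\tau_W,X_{\tau_W})$ via \eqref{l2-1-1} produces the factor $\int_0^\infty e^{-c\,g(|x|)s}\,\Pp_x(\tau_W>s)\,ds\le c/g(|x|)$ times the L\'evy mass of the landing point outside $W$, and splitting that mass according to whether the jump lands in $\bar B(0,R_0)$ (contributing $\lesssim|x|^{-d-\alpha}$, since then $|X_{\tau_W}-z|\gtrsim|x|$) or at an intermediate scale (controlled by iterating the target bound $\Ee_w[\,\cdots\,]\lesssim H(w)$), which yields $\Ee_x[\,\cdots\,]\lesssim g(|x|)^{-1}|x|^{-d-\alpha}\asymp H(x)$. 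Combining the estimates for $P_1$ and $P_2$ gives \eqref{l4-6-0} for $|x|>R_0$ as well. The conceptual point — and the reason a naive peeling fails — is that the relaxation time of order $t_0(|x|)$, which a direct decomposition would charge as the prefactor $e^{\lambda_1C_0t_0(|x|)}$ (unbounded once $T_\infty=\infty$), is here consumed by the process travelling to the \emph{fixed} ball $\bar B(0,R_0)$, where the large values of $V$ damp the contribution by exactly $H(x)$.
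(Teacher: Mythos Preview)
Your approach is genuinely different from the paper's and conceptually attractive, so let me first compare. The paper argues purely through Chapman--Kolmogorov: starting from the crude bound $p(t,z,u)\le c\,e^{-\lambda_1 t}e^{\lambda_1 C_0 t_0(|z|)}H(z)H(u)$ (essentially your first display, which the paper also derives as an intermediate step), it splits $p(2t,x,y)=\int p(t,x,w)p(t,w,y)\,dw$ according to whether $|w|$ lies below or above the spatial scale $\tilde s_1(t)$; the key device is the finite integral $\int_{\R^d} H^2(w)e^{c\,t_0(|w|)}\,dw<\infty$, which absorbs the dangerous prefactor $e^{\lambda_1 C_0 t_0(|w|)}$ upon integration, together with Proposition~\ref{P:2.4} for the large-$|w|$ part. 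Your route instead decomposes at the hitting time $\sigma$ of a fixed ball, shifts the potential by $\lambda_1$ to treat the killed part $P_1$ via Lemma~\ref{l4-4} applied to $\hat V$, and reduces $P_2$ to the functional $F(x)=\Ee_x\big[e^{-\int_0^\sigma(V-\lambda_1)\,ds};\,\sigma<\infty\big]$. Both strategies work; yours is more probabilistic and makes the role of the ground state explicit, while the paper's stays within heat-kernel calculus and avoids introducing a second Schr\"odinger operator.

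There is, however, a genuine gap in your argument for $F(x)\lesssim H(x)$. The ``L\'evy-system/strong-Markov iteration'' you sketch is circular: you bound the intermediate-scale contribution by invoking $F(w)\lesssim H(w)$ at the landing point, which is precisely the estimate being proved, and you do not verify that the map contracts (it does not obviously do so --- the intermediate term produces a constant multiple of $H(x)$, not $\theta H(x)$ with $\theta<1$). The estimate is nevertheless true, and the clean proof is already implicit in your setup: the process $M_t=e^{-\int_0^t(V-\lambda_1)\,ds}\phi_1(X_t)$ is a $\Pp_x$-martingale by \eqref{l4-6-1}, and since $V-\lambda_1$ is bounded below by a positive constant on $D$ (for $R_0$ large), optional stopping and letting $t\to\infty$ give
\[
\phi_1(x)=\Ee_x\Big[e^{-\int_0^\sigma(V-\lambda_1)\,ds}\phi_1(X_\sigma);\,\sigma<\infty\Big],
\]
whence $F(x)\le \phi_1(x)\big/\inf_{\bar B(0,R_0)}\phi_1\asymp H(x)$ by the two-sided ground-state bounds preceding \eqref{l4-6-2}. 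A minor second point: your pointwise claim $p(t-\sigma,X_\sigma,y)\le c_{R_0}\,e^{-\lambda_1(t-\sigma)}H(X_\sigma)H(y)$ for small $t-\sigma$ relies on $|X_\sigma-y|\gtrsim |y|$, which needs $|y|\ge 2R_0$; simply absorb the range $R_0<|x|\le 2R_0$ into the bounded-region argument of your first paragraph.
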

\begin{proof}
For every $z,u\in \R^d$ and $t>0$ with $t>C_0t_0(|z|)$,
\begin{equation}\label{l4-6-3}
\begin{split}
p(t,z,u)&=\int_{\R^d}p\left(t-C_0t_0(|z|),u,w\right)p\left(C_0t_0(|z|),w,z\right)\,dw\\
&\le c_1H(z)\int_{\R^d}p\left(t-C_0t_0(|z|),u,w\right)H(w)\,dw\\
&=c_1H(z)T_{t-C_0t_0(|z|)}^V H(u)\\
&\le c_2e^{-\lambda_1 t}e^{\lambda_1C_0t_0(|z|)}H(z)H(u).
\end{split}
\end{equation}
Here the constant $C_0$ is the same as that in Lemma \ref{l4-4}, the first inequality is due to
\eqref{l4-4-1}, and the last inequality follows from \eqref{l4-6-2}. By \eqref{l4-6-3}, we know that for every
$R_0>0$, \eqref{l4-6-0} holds true for all $t>C_0t_0(|x|)$ and $|x|\le R_0$.
Hence, it suffices
to show \eqref{l4-6-0} for any  $t>C_0t_0(|x|)$ and  $|x|>R_0$ with $R_0$ to be determined later.

For every $t>2C_0\max\{t_0(|x|),t_0(|y|)\}$,
\begin{equation}\label{l4-6-4}
\begin{split}
p(t,x,y)&=\int_{\R^d}p(t/2,x,w)p(t/2,w,y)\,dw \\
&\le c_3e^{-\lambda_1 t}H(x)H(y)\int_{\R^d}H^2(w)e^{c_4t_0(|w|)}\,dw\\
&\le c_5e^{-\lambda_1 t}H(x)H(y).
\end{split}
\end{equation}
Here the first inequality follows from \eqref{l4-6-3}, and in the last inequality we have used the fact
\begin{align}\label{l4-6-4a}
\int_{\R^d}H^2(w) e^{c_4t_0(|w|)}dw<+\infty,
\end{align}
which can be verified directly by \eqref{l4-2-1b}, thanks to the fact that $g(r)\to \infty$ as $r\to \infty$.
On the other hand,  similarly as in \eqref{l4-4-2}, we can find $\tilde C_1>0$  large enough such that for $t \geq \tilde C_1 T_0$,
\begin{equation}\label{l4-6-5}
\begin{split}
t\ge \tilde C_1h(|z|)\ge  2C_0t_0(|z|),\quad & z\in \R^d\,\, {\rm with}\ |z|\le s_1\left(\frac{t}{\tilde C_1}\right)=:\tilde s_1(t),\\
t\le \tilde C_1h(|z|)\le c_{6}t_0(|z|),\quad &  z\in \R^d\,\, {\rm with}\ |z|>s_1\left(\frac{t}{\tilde C_1}\right)=:\tilde s_1(t).
\end{split}
\end{equation}
Below we choose the constant $C_0$ large enough so that $C_0\inf_{x\in \R^d}t_0(|x|)\ge \tilde C_1 T_0$.
Then, according to
\eqref{l4-6-5} and \eqref{l4-6-3}, we have
\begin{equation}\label{l4-6-6}
p(t,z,u)\le c_{7}e^{-\lambda_1 t}e^{c_{8}t_0(|z|)}H(z)H(u),\quad
t \geq C_0t_0(|x|), \ |z|\le \tilde s_1(t);
\end{equation}
by \eqref{l4-6-5} and \eqref{l4-6-4}, we know that \eqref{l4-6-0} is true for every $|x|\le \tilde s_1(t)$ and $|y|\le \tilde s_1(t)$,
whenever $t> C_0t_0(|x|)$.

Next, it remains to consider the case that
$|x|\le \tilde s_1(t)$, $|y|> \tilde s_1(t)$ and $t\ge C_0t_0(|x|)$.
Set
\begin{align*}
p(2t,x,y)&=\left(\int_{\{|w|\le \tilde s_1(t)\}}+
\int_{\{|w|>\tilde s_1(t)\}}\right)
p(t,x,w)p(t,w,y)\,dw\\
&=:I_1+I_2.
\end{align*}
By \eqref{l4-6-6}, it holds that
\begin{align*}
I_1&\le c_{9}e^{-2\lambda_1 t}H(x)H(y)\int_{\{|w|\le \tilde s_1(t)\}}H^2(w)e^{c_{10}t_0(|w|)}\,dw\\
&\le c_{11}e^{-2\lambda_1 t}H(x)H(y),
\end{align*}
where the last inequality is due to \eqref{l4-6-4a}.

When $|w|>\tilde s_1(t)$ and $|y|>\tilde s_1(t)$, it follows from \eqref{l4-6-5} that there is a constant $c_{12}>0$ such that
$t\le c_{12}\min\{t_0(|w|),t_0(|y|)\}$. Hence, applying \eqref{p3-1-1}, we obtain
\begin{align*}
p(t,w,y)\le c_{13}\left(\frac{1}{t\max\{g(|w|),g(|y|)\}}\wedge 1\right)\left(t^{-d/\alpha}\wedge \frac{t}{|w-y|^{d+\alpha}}\right)e^{-c_{14}t\min\{g(|w|),g(|y|)\}}.
\end{align*}
By \eqref{l4-6-6} and the fact that $|x|\le \tilde s_1(t)\le \min\{|w|,|y|\}$,
\begin{align*}
I_2&\le c_{15}H(x)e^{c_{16}t_0(|x|)}\int_{\{|w|>\tilde s_1(t)\}}
\frac{e^{-2c_{17}t\min\{g(|w|),g(|y|)\}}H(w)}{t\max\{g(|w|),g(|y|)\}}\left(t^{-d/\alpha}\wedge \frac{t}{|w-y|^{d+\alpha}}\right)\,dw \\
&\le c_{15}e^{c_{16}t_0(|x|)-2c_{17}tg(|x|)}H(x)g(|y|)^{-1}\cdot\int_{\{|w|>\tilde s_1(t)\}}H(w)t^{-1}\left(t^{-d/\alpha}\wedge \frac{t}{|w-y|^{d+\alpha}}\right)\,dw.
\end{align*}
Note that
\begin{align*}
e^{c_{16}t_0(|x|)-2c_{17}tg(|x|)}&=e^{-c_{17}tg(|x|)}e^{c_{16}t_0(|x|)-c_{17}tg(|x|)}.
\end{align*}
Since $\lim_{r\uparrow \infty}g(r)=\infty$, we can find $R_0>0$ large enough so that for all $|x|\ge R_0$,
\begin{align*}
e^{-c_{17}tg(|x|)}\le e^{-2\lambda_1 t},\quad t>0
\end{align*} and
\begin{align*}
 e^{c_{16}t_0(|x|)-c_{17}tg(|x|)}&\le \exp\Big(-t_0(|x|)\left(C_0c_{17}g(|x|)-c_{16}\right)\Big)\le \exp\left(-\frac{C_0c_{17}t_0(|x|)g(|x|)}{2}\right)\\
&\le c_{18},\quad t>C_0t_0(|x|).
\end{align*}
Putting both estimates above together, we obtain
\begin{align*}
e^{c_{16}t_0(|x|)-2c_{17}tg(|x|)}\le c_{19}e^{-2\lambda_1 t},\quad t>C_0t_0(|x|),\ |z|>R_0.
\end{align*}
On the other hand, according to
the procedures for
the estimate of the term  $K_{2}$ in the proof of Lemma \ref{l4-4}, we find that
\begin{align*}
\int_{\{|w|>\tilde s_1(t)\}}H(w)t^{-1}\left(t^{-d/\alpha}\wedge \frac{t}{|w-y|^{d+\alpha}}\right)\,dw\le c_{20}g(|y|)H(y).
\end{align*}
Hence,
\begin{align*}
I_2\le c_{21}e^{-2\lambda_1 t}H(x)H(y),\quad t>2C_0t_0(|x|),\ |x|\le \tilde s_1(t),\ |y|>\tilde s_1(t),\ |x|>R_0.
\end{align*}
Finally, by the estimates of $I_1$ and $I_2$, we obtain
\begin{align*}
p(2t,x,y)\le c_{22}e^{-2\lambda_1 t}H(x)H(y),\quad t>2C_0t_0(|x|),\ |x|\le \tilde s_1(t),\ |y|>\tilde s_1(t),\ |x|>R_0,
\end{align*}
whenever $t>C_0t_0(|x|)$.
This implies that \eqref{l4-6-0} holds for every $|x|\le \tilde s_1(t)$ and $|y|>\tilde s_1(t)$, and the proof of the assertion \eqref{l4-6-0} is finished.
\end{proof}

\begin{lemma}\label{l4-8}
There exist constants $C_0,C_{11}>0$ such that for every $t>C_0t_0(|x|)$,
\begin{equation}\label{l4-8-0}
p(t,x,y)\ge C_{11}e^{-\lambda_1 t} H(x)H(y).
\end{equation}
\end{lemma}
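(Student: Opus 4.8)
The plan is to realize a path from $x$ to $y$ that first jumps into a fixed large ball $B_0:=B(0,R_0)$ centered at the origin, spends almost all the time $t$ inside $B_0$, and then jumps to $y$; inside $B_0$ the potential is bounded, $H\asymp 1$, and the semigroup decays exactly at rate $e^{-\lambda_1 t}$, which is what forces the sharp exponent. Throughout I fix a small $\varepsilon_0>0$ and a large $R_0>0$ with $g(R_0)\varepsilon_0\ge 1$, set $B_0':=B(0,R_0/2)$, and write $p(t,x,y)=\int_{\R^d}p(s,x,z)p(t-s,z,y)\,dz$ for the Chapman--Kolmogorov equation. The key auxiliary fact is the spectral inequality: since $-\mathcal L^V$ is self-adjoint with orthonormal eigenbasis $\{\phi_n\}$, for every $f\in L^2(\R^d)$ and $s>0$,
\[
\langle f,T_s^Vf\rangle=\sum_{n\ge 1}e^{-\lambda_n s}\langle f,\phi_n\rangle^2\ \ge\ e^{-\lambda_1 s}\langle f,\phi_1\rangle^2 .
\]
Applied to $f=\I_{B_0}$ and $f=\I_{B_0'}$ (and using $\phi_1>0$), this yields
\begin{equation}\label{e:l4-8-ball}
\int_{B}\int_{B}p(s,v,v')\,dv\,dv'\ \ge\ c\,e^{-\lambda_1 s},\qquad s>0,\ B\in\{B_0,B_0'\}.
\end{equation}

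The first step is a uniform lower bound on $B_0$ for all large times: by Chapman--Kolmogorov, restricting the two intermediate integrations to $B_0$ and using \eqref{e:l4-8-ball} together with the fact that $p(1,\cdot,\cdot)$ is continuous and strictly positive, hence bounded below, on the compact set $B_0\times B_0$,
\[
p(s,v,v')\ \ge\ \Big(\inf_{B_0\times B_0}p(1,\cdot,\cdot)\Big)^{2}\int_{B_0}\int_{B_0}p(s-2,a,b)\,da\,db\ \ge\ c_1\,e^{-\lambda_1 s},\qquad v,v'\in B_0,\ s>2.
\]
The second step is a short hop between $B_0$ and a far point: for $v\in B_0'$ and $|u|\ge R_0$ one has $|u|>|v|$, $R_0/2\le |u-v|\le 2|u|$, $g(|u|)\ge g(R_0)\ge 1/\varepsilon_0$, and $t_0(|v|)\ge \inf_{|w|\le R_0/2}t_0(|w|)>0$, so Proposition \ref{P:2.4} (with $C_0'$ chosen large enough that $\varepsilon_0\le C_0't_0(|v|)$ for all $|v|\le R_0/2$) gives, after simplifying the minima,
\[
p(\varepsilon_0,v,u)\ \ge\ c\Big(\tfrac{1}{\varepsilon_0 g(|u|)}\wedge 1\Big)e^{-c\varepsilon_0 g(|v|)}\frac{\varepsilon_0}{|u-v|^{d+\alpha}}\ \ge\ c_2\,\frac{1}{g(|u|)(1+|u|)^{d+\alpha}}\ =\ c_2\,H(u),
\]
since $e^{-c\varepsilon_0 g(|v|)}\ge e^{-c\varepsilon_0 g(R_0/2)}>0$.

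It remains to glue these together. Choose $C_0$ so large that $t>C_0t_0(|x|)\ge C_0\,c_{t_0}$ forces $t>2+2\varepsilon_0$ (recall $c_{t_0}=\inf_{z}t_0(|z|)>0$ because $t_0$ is almost increasing), and assume $|x|\le|y|$, $t>C_0t_0(|x|)$; note $H\le 1$ everywhere and $H\asymp 1$ on $B_0$. If $|x|,|y|\le R_0$, the first step with $s=t$ gives $p(t,x,y)\ge c_1e^{-\lambda_1 t}\ge c\,e^{-\lambda_1 t}H(x)H(y)$. If $|x|<R_0\le|y|$, then by Chapman--Kolmogorov, the second step, and the first step applied to $\int_{B_0'}p(t-\varepsilon_0,v,x)\,dv$,
\[
p(t,x,y)\ \ge\ \int_{B_0'}p(t-\varepsilon_0,x,v)\,p(\varepsilon_0,v,y)\,dv\ \ge\ c_2H(y)\int_{B_0'}p(t-\varepsilon_0,v,x)\,dv\ \ge\ c\,e^{-\lambda_1 t}H(y)\ \ge\ c\,e^{-\lambda_1 t}H(x)H(y).
\]
If $R_0\le|x|\le|y|$, applying Chapman--Kolmogorov twice, bounding the two short hops by the second step and the central factor by \eqref{e:l4-8-ball},
\[
p(t,x,y)\ \ge\ \int_{B_0'}\!\int_{B_0'}p(\varepsilon_0,x,v)\,p(t-2\varepsilon_0,v,v')\,p(\varepsilon_0,v',y)\,dv\,dv'\ \ge\ c_2^2H(x)H(y)\int_{B_0'}\!\int_{B_0'}p(t-2\varepsilon_0,v,v')\,dv\,dv'\ \ge\ c\,e^{-\lambda_1 t}H(x)H(y).
\]
Together with the case $|y|<|x|$ handled by symmetry, this proves \eqref{l4-8-0} with a suitable $C_{11}>0$.

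The delicate point is the first step: extracting the sharp rate $e^{-\lambda_1 t}$ as a \emph{lower} bound, uniformly over a fixed ball and all large $t$. The elementary tools --- Chapman--Kolmogorov plus the short-time estimates of Proposition \ref{P:2.4} --- only move mass over time scales comparable to $t_0(\cdot)$, which is bounded near the origin, so they cannot by themselves span the long time $t$; the correct long-time behaviour on the compact core must instead be read off from the spectral inequality $\langle\I_{B_0},T_s^V\I_{B_0}\rangle\ge e^{-\lambda_1 s}\langle\I_{B_0},\phi_1\rangle^2$, which is precisely where the principal eigenvalue $\lambda_1$ and the positivity of the ground state $\phi_1$ enter. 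Everything else is routine gluing.
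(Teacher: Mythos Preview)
Your proof is correct, but it takes a different route from the paper's. The paper exploits the two-sided ground state estimate $\phi_1\asymp H$ (quoted from \cite{KK10}) to get directly $T_s^V H(x)\ge c\,e^{-\lambda_1 s}H(x)$ for all $x$ and $s$; combined with a single application of Proposition~\ref{P:2.4} showing $p(2,z,u)\ge c\,H(z)H(u)$ for \emph{all} $z,u\in\R^d$, one Chapman--Kolmogorov step then gives $p(t,x,y)\ge c\,H(y)\,T_{t-2}^V H(x)\ge c\,e^{-\lambda_1 t}H(x)H(y)$ without any case analysis. Your argument instead localizes the sharp $e^{-\lambda_1 t}$ decay to a fixed ball via the elementary spectral inequality $\langle \I_{B_0},T_s^V\I_{B_0}\rangle\ge e^{-\lambda_1 s}\langle \I_{B_0},\phi_1\rangle^2$, which needs only $\phi_1>0$ rather than the full comparison $\phi_1\asymp H$, and then transports this to general $x,y$ by short hops and a three-case split. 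The trade-off is clear: the paper's proof is shorter and uniform because it imports a stronger black box (the global ground state bounds), while your approach is more self-contained in the spectral input at the price of more bookkeeping.
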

\begin{proof}
Since $t_0(\cdot)$ is almost increasing, we know that $\inf_{x\in \R^d}t_0(|x|)\ge c_{1}>0$. According to \eqref{p3-1-1}, for all $w\in B(0,2)$ and $z\in \R^d$,
\begin{align*}
p(1,z,w)
&\ge c_{2}\left(1 \wedge \frac{1}{|w-z|^{d+\alpha}}\right)\left(\frac{1}{\max\{g(|w|),g(|z|)\}}\wedge 1\right)
e^{-c_{3}\min\{g(|w|),g(|z|)\}}\\
&\ge \frac{c_{4}}{g(|z|)(1+|z|)^{d+\alpha}}.
\end{align*}
Hence, for every $z,u\in \R^d$,
\begin{align*}
p(2,z,u)&=\int_{\R^d}p(1,z,w)p(1,w,u)\,dw\ge \int_{B(0,2)}p(1,z,w)p(1,w,u)\,dw \ge c_5H(z)H(u).
\end{align*}
Therefore,  for every $t>C_0t_0(|x|)>2$ (by taking $C_0$ large enough),
\begin{align*}
p(t,x,y)=&\int_{\R^d}p(t-2,x,z)p(2,z,y)\,dz\ge c_5H(y)\int_{\R^d}p(t-2,x,z)H(z)\,dz\\
=&c_5H(y)T_{t-2}^VH(x)\ge c_6e^{-\lambda_1 t}H(x)H(y),
\end{align*}
where the last inequality follows from \eqref{l4-6-2}. So, the proof of \eqref{l4-8-0} is finished.
\end{proof}

We can now summarise the results in this section as follows.

\begin{proposition}\label{p4-1}
There are constants $C_0, \tilde C_0, C_{12},\ldots,C_{20}>0$ such that the following statements hold.
\begin{itemize}
\item [(1)] If $t_0(\cdot)$ is almost  decreasing,   then, for every
$C_0t_0(|x|)<t\le \tilde C_0$,
\begin{align*}
C_{12}H(x)H(y)\int_{\{|z|\le s_0(t)\}}e^{-C_{13}tg(|z|)}\,dz &\le p(t,x,y)\\
&\le C_{14}H(x)H(y) \int_{\{ |z|\le s_0(C_{15}t)\}} e^{-C_{16}tg(|z|)}\,dz;
\end{align*}
for every $t>\tilde C_0$,
\begin{align*}
 C_{17}e^{-\lambda_{1}t}H(x)H(y) \le p(t,x,y) \le C_{18}e^{-\lambda_{1}t}H(x)H(y).
\end{align*}
\item[(2)] If $t_0(\cdot)$ is almost  increasing,
then for every $t>C_0t_0(|x|)$
\begin{align*}
 C_{19}e^{-\lambda_{1}t}H(x)H(y) \le p(t,x,y) \le C_{20}e^{-\lambda_{1}t}H(x)H(y).
\end{align*}
\end{itemize}
\end{proposition}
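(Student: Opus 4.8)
The plan is to obtain Proposition~\ref{p4-1} by collecting the estimates already proved in this section; there is no new analytic input here, only a careful matching of the various time ranges and a unification of the comparison constants. Throughout I keep the standing convention $|x|\le|y|$, so that $t_0(|x|\wedge|y|)=t_0(|x|)$, and I treat the almost decreasing and the almost increasing cases separately.

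For the almost decreasing case, let $h$ be the continuous, strictly decreasing profile furnished by \eqref{e4-1}, so that $C_*^{-1}h(|x|)\le t_0(|x|)\le C_*h(|x|)$. First I would fix a constant $C$ exceeding every ``sufficiently large $C_0$'' threshold appearing in Lemmas~\ref{L:3.1}, \ref{L:3.2} and \ref{l4-7}, and then set $C_0:=C_*C$ and $\tilde C_0:=CT_0$, with $T_0=h(2)$. The inequality $t>C_0t_0(|x|)$ then forces $t>Ch(|x|)$ (here one uses the lower comparison $t_0(|x|)\ge C_*^{-1}h(|x|)$), so on the range $C_0t_0(|x|)<t\le\tilde C_0$ we are exactly in the situation $Ch(|x|)<t\le CT_0$ covered by Lemmas~\ref{L:3.1} and~\ref{L:3.2}; their lower and upper bounds already have the form asserted in the first display of part~(1), since the integrals $\int_{\{|z|\le s_0(t)\}}e^{-Ctg(|z|)}\,dz$ and $\int_{\{|z|\le s_0(Ct)\}}e^{-Ctg(|z|)}\,dz$ agree with the claimed ones up to the choice of exponential constant and the argument of $s_0$. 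For $t>\tilde C_0=CT_0$, Lemma~\ref{l4-7} applies directly and yields $p(t,x,y)\asymp e^{-\lambda_1 t}H(x)H(y)$, which is the second display of part~(1). Relabelling the constants then produces $C_{12},\dots,C_{18}$.

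For the almost increasing case the argument is much shorter: taking $C_0$ to be the larger of the two ``sufficiently large'' thresholds in Lemmas~\ref{l4-6} and~\ref{l4-8}, the former gives $p(t,x,y)\le C_{10}e^{-\lambda_1 t}H(x)H(y)$ and the latter gives $p(t,x,y)\ge C_{11}e^{-\lambda_1 t}H(x)H(y)$ for every $t>C_0t_0(|x|)$; relabelling yields part~(2). Since the two cases of the proposition are independent conditionals, no compatibility between them is needed; one may of course take a common $C_0$ by maximising over all the thresholds involved.

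The only genuinely delicate point — and hence the expected obstacle — is this bookkeeping: one must check that a single pair $C_0,\tilde C_0$ serves all the invoked lemmas simultaneously, and that the passages between $t_0(|x|)$ and $h(|x|)$ through \eqref{e4-1} are carried out in the correct direction. One should also note that for $|x|$ small the middle range $C_0t_0(|x|)<t\le\tilde C_0$ of part~(1) may be empty — in that case $t>C_0t_0(|x|)$ already forces $t>\tilde C_0$, so the second display takes over — which is precisely why the two subcases of part~(1) cover every $t>C_0t_0(|x|)$ without a gap. None of this is hard, but it has to be arranged cleanly so that the comparison constants $C_{12},\dots,C_{20}$ can be extracted in a uniform way.
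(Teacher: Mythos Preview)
Your proposal is correct and matches the paper's approach exactly: the paper presents Proposition~\ref{p4-1} as a direct summary of the section's lemmas (introduced by ``We can now summarise the results in this section as follows'') with no separate proof, and your plan simply makes explicit the collation of Lemmas~\ref{L:3.1}, \ref{L:3.2}, \ref{l4-7} for part~(1) and Lemmas~\ref{l4-6}, \ref{l4-8} for part~(2), together with the $t_0\asymp h$ conversion via~\eqref{e4-1}. Your bookkeeping remarks about choosing a common threshold and the possibly empty middle range are accurate and in fact slightly more careful than what the paper spells out (cf.\ Remark~\ref{remark4.9}).
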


\begin{remark}\label{remark4.9}
Note that in Proposition \ref{P:2.4} the time interval can be $0<t\le C_0't_0(|x|)$ with any $C_0'>0$. Therefore, according to \eqref{e4-1},
the estimates in Proposition \ref{p4-1} still hold true for
$C_0t_0(|x|)<t\le \tilde C_0$
by taking a different
constant $C_0$ if necessary.
On the other hand, by the statement (1) in Proposition \ref{p4-1}, the estimates in (1) hold for any $\tilde C_0$ with $\tilde C_0 >C_0t_0(|x|).$
\end{remark}

\section{Justification of Example \ref{ex1}}\label{section5}
By assumption we know that $V$ satisfies \eqref{e1-3} with $g(s)=\log^\beta(e+s)$. So, by \eqref{e1-6}, it is easy to check that $t_0(s)$ satisfies \eqref{e4-1} with $h(s)=\log^{1-\beta}(2+s)$. Then the estimates (1) and (2-i) follow directly from Theorem \ref{th:main}.

It remains to verify (2-ii).
Let $\beta>1$. Since $h(s)=\log^{1-\beta}(2+s)$,
\[
s_0(t):=h^{-1}(t) \vee 2= \begin{cases}
2,\ & t \ge \log^{1-\beta}4,\\
\exp(t^{-1/(\beta-1)})-2,\ & t \in (0,\log^{1-\beta}4].
\end{cases}
\]
For every $0<t\le 2$, first observe that for every positive constants $c_0, c_1$ and $c_2$,
\begin{align*}
 \int_{\{|z|\le c_1  s_0(c_2t)\}}
 e^{-c_0 t
  \log^\beta(2+|z|)
 }\,dz\le c_3(c_1s_0(c_2t))^d\le  c_4\exp(c_5t^{-1/(\beta-1)}).
\end{align*}	
On the other hand, for every $0<t\le2$,
\begin{align*}
\int_{\{|z|\le s_0(t)\}}
 e^{-c_0 t\log^\beta(2+|z|)
 }\,dz\ge \int_{\{|z|\le 2\}}
 e^{-c_0 t\log^\beta(2+|z|)}\,dz\ge c_5.
\end{align*}
Hence,  in order to get the desired lower bound it suffices to consider the time interval
$0<t\le t_0$ for some fixed
$t_0\in (0,\log^{1-\beta}4]$.
Using spherical coordinates and the substitution $s = \log^{\beta}(2+r)$, we get
\begin{align*}
\int_{\{|z|\le s_0(t)\}}
 e^{-c_0 t\log^\beta(2+|z|)
 }\,dz&=c_6\int_0^{s_0(t)}e^{-c_0t \log^\beta(2+r)}r^{d-1}\,dr\\
&=c_6\beta^{-1}\int_{\log^\beta 2}^{t^{-\beta/(\beta-1)}}e^{-c_0 t s}\left(e^{s^{1/\beta}}-2\right)^{d-1}
e^{s^{1/\beta}}s^{-(\beta-1)/\beta}\,ds.
\end{align*}
Choose $\theta\in (0,1/2)$ and $t_0\in (0,\log^{1-\beta}4]$ such that
\begin{align}\label{ex1-1-1}
\theta t_0^{-\beta/(\beta-1)}/2\ge  \log^\beta 2,\quad  d\left(\frac{\theta}{2}\right)^{1/\beta}-c_0\theta\ge c_0\theta/2;
\end{align}
 this is possible because $\beta>1$.
Therefore, for every $0<t\le t_0$,
\begin{align*}
&\int_{\{|z|\le s_0(t)\}}
 e^{-c_0 t\log^\beta(2+|z|)
 }\,dz\\
&\ge c_6\beta^{-1}\int_{\theta t^{-\beta/(\beta-1)}/2}^{\theta t^{-\beta/(\beta-1)}}e^{-c_0 t s}\left(e^{s^{1/\beta}}-2\right)^{d-1}
e^{s^{1/\beta}}s^{-(\beta-1)/\beta}\,ds\\
&\ge c_7t\int_{\theta t^{-\beta/(\beta-1)}/2}^{\theta t^{-\beta/(\beta-1)}}\exp\left(ds^{1/\beta}-c_0 t s\right)\,ds\\
&\ge c_8t^{1-\beta/(\beta-1)}\cdot\exp\left(t^{-1/(\beta-1)}\left(d\left(\frac{\theta}{2}\right)^{1/\beta}-c_0\theta\right)\right)\\
&\ge c_9\exp\left(c_0\theta t^{-1/(\beta-1)}/2\right),
\end{align*}
where the last inequality follows from \eqref{ex1-1-1}. Thus, the proof is complete.

\section{Proofs of main results}\label{section6}
\begin{proof}[Proof of Theorem $\ref{th:main}$]
The conclusion is a direct consequence of Proposition \ref{P:2.4} and Proposition \ref{p4-1}.
See Remark \ref{remark4.9} for the additional comments.
\end{proof}

\begin{proof}[Proof of Theorem $\ref{prop:Green}$]
Let $x,y\in \R^d$ with
$|x|\le |y|$
and $x \neq y$.  We write
\[
G^V(x,y) = \int_0^{C_0 t_0(|x|)} p(t,x,y)\, dt + \int_{C_0 t_0(|x|)}^\infty p(t,x,y)\, dt =: I + II,
\]
 where the constant $C_0>0$ comes from Theorem \ref{th:main}.

\ \

We first apply the estimates from Theorem \ref{th:main}(1) to estimate the integral $I$.  An analysis is divided into three cases.

\smallskip

\noindent
\textbf{Case 1}:
$2|x-y|^\alpha\le C_0t_0(|x|)$ and $2/g(|x|)<|x-y|^\alpha$.

We write
\[
I = \left(\int_0^{1/g(|y|)} + \int_{1/g(|y|)}^{|x-y|^{\alpha}} + \int_{|x-y|^{\alpha}}^{C_0t_0(|x|)}  \right) p(t,x,y)\, dt =: I_1 + I_2 + I_3.
\]
According to \eqref{t1-1-1}, we have
\[
I_1 \asymp |x-y|^{-d-\alpha} \int_0^{1/g(|y|)} t e^{-c t g(|x|)} \,dt \asymp |x-y|^{-d-\alpha} \int_0^{1/g(|y|)} t  \,dt \asymp |x-y|^{-d-\alpha} g(|y|)^{-2}.
\]
We point out that all of the comparability formulas in this proof contain the exponential terms with a generic constant $c$, but this is only a shorthand notation. The exponential term $e^{-c t g(|x|)}$  comes from the two-sided bound in Theorem \ref{th:main}(1) and the reader should note that we always have $c=C_2$ in lower estimates and $c=C_4$ in upper estimates, for different $C_2, C_4$.

Similarly, by \eqref{t1-1-1},
\begin{align*}
I_2 & \asymp  |x-y|^{-d-\alpha} g(|y|)^{-1} \int_{1/g(|y|)}^{|x-y|^{\alpha}} e^{-c t g(|x|)}\, dt \\
&\asymp |x-y|^{-d-\alpha} g(|x|)^{-1}g(|y|)^{-1}  \int_{g(|x|)/g(|y|)}^{|x-y|^\alpha g(|x|)} e^{-c s} \,ds \\
&\asymp 		|x-y|^{-d-\alpha} g(|x|)^{-1}g(|y|)^{-1},
\end{align*}
where in the third step we have used the fact that (since $g(|x|)/g(|y|)\le 1$ and $|x-y|^\alpha g(|x|)\ge 2$)
\begin{align*}
c_1\le \int_1 ^2 e^{-cs}\,ds\le \int_{g(|x|)/g(|y|)}^{|x-y|^\alpha g(|x|)} e^{-c s} \,ds\le \int_0^\infty e^{-cs}\,ds\le c_2.
\end{align*}
Moreover, by applying \eqref{t1-1-1}  and using
substitution $t = |x-y|^{\alpha} s$, we get
\begin{align*}
I_3 & \asymp g(|y|)^{-1} \int_{|x-y|^{\alpha}}^{C_0t_0(|x|)} t^{-d/\alpha-1} e^{-ct g(|x|)} \,dt \\
    & \asymp |x-y|^{-d} g(|y|)^{-1} \int_1^{C_0t_0(|x|)/|x-y|^{\alpha}} s^{-d/\alpha-1} e^{-c|x-y|^{\alpha}g(|x|) s }\, ds \\
		& \asymp |x-y|^{-d} g(|y|)^{-1} e^{-c|x-y|^{\alpha}g(|x|)},
\end{align*}
where the third step follows from the estimates
\begin{align*}
\int_1^{C_0t_0(|x|)/|x-y|^{\alpha}} s^{-d/\alpha-1} e^{-c|x-y|^{\alpha}g(|x|) s }\, ds
      &\leq e^{-c |x-y|^{\alpha}g(|x|)} \int_1^{\infty} s^{-d/\alpha-1} \,ds\\
      &\le c_3e^{-c |x-y|^{\alpha}g(|x|)}
\end{align*}
and
\begin{align*}
\int_1^{C_0t_0(|x|)/|x-y|^{\alpha}} s^{-d/\alpha-1} e^{-c |x-y|^{\alpha}g(|x|) s } \,ds
      &\geq e^{-2 c |x-y|^{\alpha}g(|x|)} \int_1^{2} s^{-d/\alpha-1} \,ds\\
      &\ge c_4e^{-2c |x-y|^{\alpha}g(|x|)}.
\end{align*}
In particular,
\begin{align*}
I_3 &\leq c_5 |x-y|^{-d-\alpha} g(|x|)^{-1} g(|y|)^{-1}.
\end{align*}
Consequently, combining all the estimates above with the facts $g(|y|)\ge g(|x|)$ and $Q(x,y)\asymp 1$ (since $2/g(|x|)<|x-y|^\alpha$), we have
\[
I \asymp |x-y|^{-d-\alpha}g(|x|)^{-1}g(|y|)^{-1}\asymp |x-y|^{-d+\alpha}\cdot\left(\frac{1}{g(|y|)|x-y|^\alpha}\wedge 1\right)
\left(\frac{1}{g(|x|)|x-y|^\alpha}\wedge 1\right)Q(x,y).
\]

\smallskip

\noindent

\textbf{Case 2}: $2|x-y|^{\alpha}  \leq C_0t_0(|x|)$ and $2/g(|x|) \geq |x-y|^{\alpha}$.
In this case, we will use the simpler decomposition
\[
I = \left(\int_0^{|x-y|^{\alpha}/2} + \int_{|x-y|^{\alpha}/2}^{C_0t_0(|x|)}  \right) p(t,x,y)\, dt =: J_1 + J_2.
\]
Indeed, $|x-y|\le \left(2/g(|x|)\right)^{1/\alpha}\le c_6$, so it holds that $g(|x|)\asymp g(|y|)$ for this range of $x$ and $y$, which implies that
\begin{equation}\label{t1-2-1}
(t g(|y|))^{-1} \wedge 1) e^{-c t g(|x|)}  \asymp e^{- t g(|x|)},\quad 0\le t \le C_0t_0(|x|).
\end{equation}
With this and \eqref{t1-1-1},
\[
J_1 \asymp |x-y|^{-d-\alpha} \int_0^{|x-y|^{\alpha}/2} t e^{-c t g(|x|)} \,dt
\asymp |x-y|^{-d-\alpha} \int_0^{|x-y|^{\alpha}/2} t\, dt
\asymp |x-y|^{-d+\alpha},
\]
where we have used the fact that $|x-y|^\alpha g(|x|)\le 2$.

Meanwhile, by \eqref{t1-1-1}, \eqref{t1-2-1} and substitution $t = |x-y|^{\alpha} s/2$, we derive
\begin{align*}
J_2 &\asymp \int_{|x-y|^{\alpha}/2}^{C_0t_0(|x|)} t^{-d/\alpha} e^{-c t g(|x|)} dt
    \asymp |x-y|^{-d+\alpha} \int_{1}^{\frac{2C_0t_0(|x|)}{|x-y|^{\alpha}}} s^{-d/\alpha} e^{-c s g(|x|)|x-y|^{\alpha}} ds.
\end{align*}

If $\alpha < d$, then, by the fact $2|x-y|^{\alpha}  \leq C_0t_0(|x|)$ and $|x-y|^\alpha g(|x|)\le 2$, it holds that
\begin{align*}
c_7\le c_8\int_1^2 s^{-d/\alpha}\,ds \le
c_9
\int_{1}^{\frac{2C_0t_0(|x|)}{|x-y|^{\alpha}}} s^{-d/\alpha} e^{-c s g(|x|)|x-y|^{\alpha}} \,ds\le c_{10}\int_1^\infty s^{-d/\alpha}\,ds=\frac{ c_{10}\alpha}{d-\alpha}.
\end{align*}
If $\alpha=d=1$, then, by noting from \eqref{e1-6} and
\eqref{l4-2-1c}
that $2C_0t_0(|x|)\ge c_{11}g(|x|)^{-1}$,
\begin{align*}
\int_{1}^{\frac{2C_0t_0(|x|)}{|x-y|^{\alpha}}} s^{-d/\alpha} e^{-c s g(|x|)|x-y|^{\alpha}} \,ds&\ge c_{12}\int_{1}^{
\max\left\{\frac{c_{11}}{g(|x|)|x-y|^\alpha},4\right\}} s^{-1}\,ds\\
&
\ge c_{13}\max \left\{\log \left(\frac{1}{|x-y|^\alpha g(|x|)}\right),1\right\}
\end{align*} and
\begin{align*}
&\int_{1}^{\frac{2C_0t_0(|x|)}{|x-y|^{\alpha}}} s^{-d/\alpha} e^{-c s g(|x|)|x-y|^{\alpha}} \,ds\\
&\le
c_{14}\int_{1}^{\max\left\{\frac{c_{11}}{g(|x|)|x-y|^\alpha},1\right\}} s^{-1}\,ds+c_{14}g(|x|)|x-y|^{\alpha}\cdot\int_{\max\left\{\frac{c_{11}}{g(|x|)|x-y|^\alpha},1\right\}}^\infty
e^{-c s g(|x|)|x-y|^{\alpha}}\, ds\\
&\le c_{15}\max \left\{\log \left(\frac{c_{11}}{g(|x|)|x-y|^\alpha}\right),1\right\}.
\end{align*}
If $\alpha > d=1$, then
\begin{align*}
\int_{1}^{\frac{2C_0t_0(|x|)}{|x-y|^{\alpha}}} s^{-d/\alpha} e^{-c s g(|x|)|x-y|^{\alpha}} \,ds&\ge c_{16}\int_{1}^{
\max\left\{\frac{c_{11}}{g(|x|)|x-y|^\alpha},4\right\}} s^{-d/\alpha}\,ds\\
&\ge c_{17}\left(\frac{1}{|x-y|^\alpha g(|x|)}\right)^{(\alpha-d)/\alpha}
\end{align*} and
\begin{align*}
&\int_{1}^{\frac{2C_0t_0(|x|)}{|x-y|^{\alpha}}} s^{-d/\alpha} e^{-c s g(|x|)|x-y|^{\alpha}} \,ds\\
&\le
c_{18}\int_{1}^{
\max\left\{\frac{c_{11}}{g(|x|)|x-y|^\alpha},1\right\}} s^{-d/\alpha}\,ds\\
&\quad+c_{18}\left(g(|x|)|x-y|^{\alpha}\right)^{-d/\alpha}\cdot\int_{
\max\left\{\frac{c_{11}}{g(|x|)|x-y|^\alpha},1\right\}}^\infty
e^{-c s g(|x|)|x-y|^{\alpha}}\, ds\\
&\le c_{19}\left(\frac{1}{|x-y|^\alpha g(|x|)}\right)^{(\alpha-d)/\alpha}.
\end{align*}
Hence, putting all above estimates together, we obtain
\begin{align*}
J_2\asymp |x-y|^{-d+\alpha}Q(x,y),
\end{align*}
which implies immediately that
\begin{align*}
I \asymp |x-y|^{-d+\alpha}Q(x,y)\asymp |x-y|^{-d+\alpha}\cdot\left(\frac{1}{g(|y|)|x-y|^\alpha}\wedge 1\right)
\left(\frac{1}{g(|x|)|x-y|^\alpha}\wedge 1\right)Q(x,y),
\end{align*}
where we have used the fact that $|x-y|^\alpha\le 2/g(|x|)$ and $g(|x|)\asymp g(|y|)$.

\smallskip

\noindent
\textbf{Case 3}:
$2|x-y|^\alpha>C_0t_0(|x|)$.

Similarly, it follows from \eqref{e1-6} and
\eqref{l4-2-1c}
that $C_0t_0(|x|)>2c_{20}g(|x|)^{-1}\ge 2c_{20}g(|y|)^{-1}$. Let
\begin{align*}
I = \left(\int_0^{c_{20}/g(|y|)} + \int_{c_{20}/g(|y|)}^{C_0t_0(|x|)}  \right) p(t,x,y) \,dt =: K_1 + K_2.
\end{align*}
So by \eqref{t1-1-1}, we get
\begin{align*}
K_1 &\asymp |x-y|^{-d-\alpha}\int_0^{c_{20}/g(|y|)}t \,dt\asymp |x-y|^{-d-\alpha}g(|y|)^{-2}
\end{align*} and
\begin{align*}
K_2 & \asymp  |x-y|^{-d-\alpha} g(|y|)^{-1} \int_{c_{20}/g(|y|)}^{C_0t_0(|x|)} e^{-c t g(|x|)}\, dt  \\
    & \asymp  |x-y|^{-d-\alpha} g(|x|)^{-1} g(|y|)^{-1},
\end{align*}
where in the last step we have used the fact that
\begin{align*}
c_{21}g(|x|)^{-1}\le & \int_{c_{20}/g(|y|)}^{2c_{20}/g(x)} e^{-c t g(|x|)} \,dt\le \int_{c_{20}/g(|y|)}^{C_0t_0(|x|)} e^{-c t g(|x|)} \,dt
\\ \le &\int_{c_{20}/g(|y|)}^{\infty} e^{-c t g(|x|)} \,dt\le c_{22}g(|x|)^{-1}.
\end{align*}

Putting all the estimates above together and combining them with the facts that $g(|y|)\ge g(|x|)$ and $Q(x,y)\asymp 1$ (since $c_{20}/g(|x|)<|x-y|^\alpha$),  we have
\[
I \asymp |x-y|^{-d-\alpha}g(|x|)^{-1}g(|y|)^{-1}\asymp |x-y|^{-d+\alpha}\cdot\left(\frac{1}{g(|y|)|x-y|^\alpha}\wedge 1\right)
\left(\frac{1}{g(|x|)|x-y|^\alpha}\wedge 1\right)Q(x,y).
\]

By now we have obtained that for all of these cases,
\[
I \asymp \frac{1}{|x-y|^{d-\alpha}} \left(1 \wedge \frac{1}{g(|x|)|x-y|^{\alpha}}\right) \left(1 \wedge \frac{1}{g(|y|)|x-y|^{\alpha}}\right) Q(x,y).
\]

\ \

Next, we are going to estimate the integral $II$. We will show that there is a constant $c_{23}>0$ such that $II \le c_{23} I$, which will lead us immediately to the
desired conclusion. Recall we assume that $|x| \leq |y|$.

When $t_0(\cdot)$ is almost increasing,  it follows directly from  \eqref{t1-1-2} that
\begin{equation}\label{t1-2-2}
II \le  \frac{c_{24}}{g(|x|)g(|y|) (1+|x|)^{d+\alpha}(1+|y|)^{d+\alpha}}.
\end{equation}
Suppose that $t_0(\cdot)$ is almost  decreasing.
Then we set
\begin{align*}
II= \left(\int_{C_0t_0(|x|)}^{\tilde C_0} + \int_{\tilde C_0}^{\infty} \right) p(t,x,y) \,dt =: L_1 + L_2,
\end{align*}
where $\tilde C_0$ is the same constant in Theorem \ref{th:main}.
By \eqref{t1-1-4} it is direct to see that
\begin{align*}
L_2\le  \frac{c_{25}}{g(|x|)g(|y|) (1+|x|)^{d+\alpha}(1+|y|)^{d+\alpha}}.
\end{align*}
By tracking the proof of Theorem
\ref{th:main}(2-ii) we can take $C_0>C_*C_{10}^{-1}$, where $C_*$, $C_{10}$ are positive constants in
\eqref{e4-1} and \eqref{t1-1-3}, respectively. Hence, according to \eqref{e4-1},
for every
$C_0t_0(|x|)\le t \le \tilde C_0$,
\begin{align*}
s_0(C_{10}t)&\le s_0\left(C_0C_{10}t_0(|x|)\right)\le s_0\left(C_0C_{10}C_*^{-1}h(|x|)\right)
\le s_0\left(h(|x|)\right)\le (|x|\vee 2)\le 2(1+|x|).
\end{align*}
Then, by \eqref{t1-1-3},
\begin{align*}
L_1&\le  c_{26}H(x)H(y)
\int_{C_0t_0(|x|)}^{\tilde C_0}
\int_{\{|z|\le c_{27}s_0(C_{10}t)\}} e^{-c_{28} tg(|z|)}\,dz\,dt \\
& \le c_{29}H(x)H(y)(1+|x|)^d.
\end{align*}
Consequently, combining the estimates of $L_1$ and $L_2$ with \eqref{t1-2-2}, we can conclude that if $t_0(\cdot)$ is an almost monotone function
and $C_0t_0(|x|) \le h(2)$,
then
\[
II \le
\frac{c_{30}}{g(|x|)g(|y|) (1+|y|)^{d+\alpha}}.
\]
On the other hand, when $C_0t_0(|x|) > h(2)$, $II \leq L_2$ and so it is clear that the inequality above holds true.

Observe now that for the {\bf Case 1} and {\bf Case 3} above, it holds that $|x-y|^{\alpha}>c_{31}g(|x|)^{-1}\ge c_{31}g(|y|)^{-1}$. Then, we can write
\begin{align*}
II & \le  \frac{c_{30}}{g(|x|)g(|y|) (1+|y|)^{d+\alpha}} \\
   & = \frac{c_{30}  |x-y|^{d+\alpha}}{|x-y|^{d-\alpha}g(|x|)|x-y|^{\alpha}g(|y|)|x-y|^{\alpha} (1+|y|)^{d+\alpha}} \\
	   & \le  \frac{c_{32}}{|x-y|^{d-\alpha}} \left(1 \wedge \frac{1}{g(|x|)|x-y|^{\alpha}}\right) \left(1 \wedge \frac{1}{g(|y|)|x-y|^{\alpha}}\right)\frac{|x-y|^{d+\alpha}}{(1+|y|)^{d+\alpha}} \\
	 & \le \frac{c_{32}}{|x-y|^{d-\alpha}} \left(1 \wedge \frac{1}{g(|x|)|x-y|^{\alpha}}\right) \left(1 \wedge \frac{1}{g(|y|)|x-y|^{\alpha}}\right)  \frac{(2|y|)^{d+\alpha}}{(1+|y|)^{d+\alpha}}  \\
	&  \le c_{33} I.
\end{align*}
For the remaining {\bf Case 2} above, it holds that $|x-y|^{\alpha}\le c_{34}g(|x|)^{-1}\le c_{35}$
and $g(|x|)\asymp g(|y|)$.
By this we obtain immediately that
\begin{align*}
II\le c_{36}\le c_{37}I.
\end{align*}
So now we have finished the proof.
\end{proof}

 \bigskip

\noindent {\bf Acknowledgements.}\,\, The research of Xin Chen is supported by
the National Natural Science Foundation of China (No.\ 12122111).
The research of Kamil Kaleta is supported by the National Science Centre, Poland, project OPUS-18 2019/35/B/ST1/02421.
The research of Jian Wang is supported by the National Key R\&D Program of China (2022YFA1006003) and  the National Natural Science Foundation of China (No.\ 12225104).








\noindent {\bf Conflict of Interest}\,\,
The authors have no competing interests to declare that are relevant to the content of this
article.



\end{document}